\documentclass[11pt]{article}
\usepackage[utf8]{inputenc}
\usepackage{lmodern}
\usepackage[T1]{fontenc}
\usepackage[top=1in, bottom=1.in, left=1in, right=1in]{geometry}
\usepackage{graphicx}
\usepackage{longtable}
\usepackage{float}
\usepackage{wrapfig}
\usepackage{rotating}
\usepackage{tikz}
\usetikzlibrary{shadows.blur}
\usetikzlibrary{arrows}
\usetikzlibrary{positioning}
\usepackage[normalem]{ulem}
\usepackage{amsmath}
\usepackage{textcomp}
\usepackage{marvosym}
\usepackage{wasysym}
\usepackage{amssymb}
\usepackage{amsmath}
\usepackage{nicefrac}
\usepackage[theorems, skins]{tcolorbox}
\usepackage[version=3]{mhchem}
\usepackage[numbers]{natbib}
\usepackage{url}
\usepackage[strings]{underscore}
\usepackage[linktocpage,pdfstartview=FitH,colorlinks,
linkcolor=blue,anchorcolor=blue,
citecolor=blue,filecolor=blue,menucolor=blue,urlcolor=blue]{hyperref}
\usepackage{attachfile}
\usepackage{setspace}
\usepackage{amsmath}
\usepackage{mathtools}
\usepackage{amsthm}
\usepackage{stmaryrd}
\usepackage{fancyheadings}
\usepackage{algorithm}
\usepackage{algpseudocode}
\usepackage[pdf]{graphviz}
\usepackage{tcolorbox}
\newtheorem{theorem}{Theorem}
\newtheorem{lemma}[theorem]{Lemma}
\newtheorem{proposition}[theorem]{Proposition}
\newtheorem{corollary}[theorem]{Corollary}
\theoremstyle{definition}
\newtheorem{definition}[theorem]{Definition}
\newtheorem{fact}[theorem]{Fact}

\newtheorem{remark}[theorem]{Remark}
\newtheorem{example}[theorem]{Example}

\newcommand{\TODO}[1]{\textcolor{red}{(TODO: #1)}}
\newcommand{\rpart}{\mathop{\mathrm{Re}}}
\newcommand{\ipart}{\mathop{\mathrm{Im}}}
\newcommand{\R}{\mathbb{R}}
\newcommand{\C}{\mathbb{C}}
\newcommand{\Z}{\mathbb{Z}}

\newcommand{\T}{\textsc{T}}
\newcommand{\pinv}{+}
\newcommand{\winv}{-}
\newcommand{\ha}{\textnormal{\textsc{h}}}
\newcommand{\sconj}{*}
\newcommand{\adj}{*}

\newcommand{\hadprod}{\bullet}
\newcommand{\faceprod}{\Delta}

\newcommand{\tprod}{\star_{\text{t}}}
\newcommand{\Mprod}{\star_{\matM}}
\newcommand{\Mleq}{\leq_{\matM}}
\newcommand{\Mgeq}{\geq_{\matM}}
\newcommand{\unitM}{\ve_{\matM}}
\newcommand{\KM}{{\mathbb{K}}_{\matM}}
\newcommand{\Mdot}{{\cdot}_{\matM}}
\renewcommand{\vec}[1]{\mathbf{\boldsymbol{#1}}}
\newcommand{\mat}[1]{\mathbf{#1}}
\newcommand{\ten}[1]{\mathcal{#1}}
\newcommand{\inner}[2]{\left\langle #1,#2 \right\rangle}
\newcommand{\dotprod}[2]{\left\langle #1,#2 \right\rangle_F}
\newcommand{\Xnorm}[2]{\left\| #1 \right\|_{#2}}
\newcommand{\Fnorm}[1]{\Xnorm{#1}{F}}

\newcommand{\XnormS}[2]{\left\| #1 \right\|^2_{#2}}
\newcommand{\FnormS}[1]{\XnormS{#1}{F}}

\newcommand{\lrapprox}[2]{\left[ #1 \right]_{#2}}

\newcommand{\rank}{\mathop{\mathrm{rank}}}
\renewcommand{\dim}{\mathop{\mathrm{dim}}}

\newcommand{\Mrank}{\mathop{\Mprod\mathrm{-rank}}}
\newcommand{\Mmultirank}{\mathop{\Mprod\mathrm{-multirank}}}
\newcommand{\squeeze}{\mathop{\mathrm{squeeze}}}
\newcommand{\twist}{\mathop{\mathrm{twist}}}

\newcommand{\spn}{\mathop{\mathrm{span}}}
\newcommand{\diag}{\mathop{\mathrm{diag}}}
\newcommand{\circmatrix}{\mathop{\mathrm{circ}}}
\newcommand{\range}{\mathop{\mathrm{range}}}
\newcommand{\nullsp}{\mathop{\mathrm{null}}}

\newcommand{\nmodeprod}[1]{\times_{#1}}

\newcommand{\sj}{\mathrm{j}}
\newcommand{\si}{\mathrm{i}}
\newcommand{\va}{\vec{a}}
\newcommand{\vb}{\vec{b}}
\newcommand{\vc}{\vec{c}}

\newcommand{\ve}{\vec{e}}
\newcommand{\vn}{\vec{n}}
\newcommand{\vi}{\vec{i}}
\newcommand{\vp}{\vec{p}}
\newcommand{\vr}{\vec{r}}
\newcommand{\vs}{\vec{s}}

\newcommand{\vu}{\vec{u}}
\newcommand{\vv}{\vec{v}}
\newcommand{\vw}{\vec{w}}
\newcommand{\vx}{\vec{x}}
\newcommand{\vy}{\vec{y}}
\newcommand{\vz}{\vec{z}}
\newcommand{\vzero}{\vec{0}}

\newcommand{\vbeta}{\vec{\beta}}

\newcommand{\vsigma}{\vec{\sigma}}

\newcommand{\matA}{\mat{A}}
\newcommand{\matB}{\mat{B}}
\newcommand{\matC}{\mat{C}}
\newcommand{\matD}{\mat{D}}
\newcommand{\matE}{\mat{E}}
\newcommand{\matF}{\mat{F}}

\newcommand{\matM}{\mat{M}}
\newcommand{\matN}{\mat{N}}

\newcommand{\matQ}{\mat{Q}}
\newcommand{\matR}{\mat{R}}
\newcommand{\matS}{\mat{S}}

\newcommand{\matI}{\mat{I}}
\newcommand{\matU}{\mat{U}}
\newcommand{\matV}{\mat{V}}
\newcommand{\matW}{\mat{W}}
\newcommand{\matX}{\mat{X}}
\newcommand{\matY}{\mat{Y}}
\newcommand{\matZ}{\mat{Z}}
\newcommand{\matZero}{\mat{0}}

\newcommand{\matLambda}{\mat{\Lambda}}

\newcommand{\tenA}{\ten{A}}
\newcommand{\tenB}{\ten{B}}
\newcommand{\tenC}{\ten{C}}
\newcommand{\tenD}{\ten{D}}

\newcommand{\tenI}{\ten{I}}
\newcommand{\tenU}{\ten{U}}
\newcommand{\tenV}{\ten{V}}

\newcommand{\tenS}{\ten{S}}
\newcommand{\tenX}{\ten{X}}

\newcommand{\Rep}{\mathrm{Rep}}

 \author{Haim Avron
\thanks{School of Mathematical Sciences, Raymond and Beverly Sackler Faculty of Exact Sciences, Tel Aviv University, Tel Aviv 6997801, Israel.}
\and
Uria Mor$^*$
}
\date{\today}
\title{Demystifying Tubal Tensor Algebra}
\begin{document}

\label{bibliographystyle link}
\bibliographystyle{plain}

\maketitle
\begin{abstract}
Developed in a series of seminal papers in the early 2010s, the tubal tensor framework provides a clean and effective algebraic setting for tensor computations, supporting matrix-mimetic features such as a tensor Singular Value Decomposition and Eckart–Young-like optimality results. It has proven to be a powerful tool for analyzing inherently multilinear data arising in hyperspectral imaging, medical imaging, neural dynamics, scientific simulations, and more.
At the heart of tubal tensor algebra lies a special tensor-tensor product: originally the t-product, later generalized into a full family of products via the $\Mprod$-product. Though initially defined through the multiplication of a block-circulant unfolding of one tensor by a matricization of another, it was soon observed that the t-product can be interpreted as standard matrix multiplication where the scalars are tubes—i.e., real vectors twisted ``inward.'' Yet, a fundamental question remains: why is this the ``right'' way to define a tensor-tensor product in the tubal setting?
In this paper, we show that the t-product and its $\Mprod$ generalization arise naturally when viewing third-order tensors as matrices of tubes, together with a small set of desired algebraic properties. Furthermore, we prove that the $\Mprod$-product is, in fact, the only way to define a tubal product satisfying these properties. Thus, while partly expository in nature — aimed at presenting the foundations of tubal tensor algebra in a cohesive and accessible way — this paper also addresses theoretical gaps in the tubal tensor framework, proves  new results, and provides justification for the tubal tensor framework central constructions, 
thereby shedding new light on it.
\end{abstract}

\section{Introduction}
\label{sec:introduction}

Across scientific computing, data science, machine learning, and many other fields, the de-facto standard way to organize numerical data and work with it algebraically, is using vectors and matrices. Matrix algebra provides a well-founded mathematical framework for working with data, and provides useful tools like the SVD decomposition, and associated powerful results like the Eckart-Young Theorem. Even when data is multilinear or multiway, it is common to eschew dimensional integrity, and force data into the form of matrices by flattening it, all for the sake of using linear algebra tools in the analysis.  

However, starting with the classical works of Hitchcock, Cattel, Tucker, Kruskal and Harshman~\cite{hitchcock-1927-expres-tensor,hitchcock-1928-multip-invar,cattell-1944-paral-propor,tucker-1963-implic,harshman-1970-parafac,kruskal-1977-three-way-array}, and much more profoundly in the last couple of decades, there is a push towards using representations and computational frameworks that preserve the dimensional integrity of the data, i.e., using tensor-based methods. Central to tensor-based methods are tensor decompositions that aim to generalize matrix SVD~\cite{kolda-2009-tensor-decom-applic}.
Unfortunately, there is more than one logical way to generalize matrix SVD. Even more unfortunate is the fact that computing many such decompositions is NP-hard~\cite{hillar-2013-most-tensor}, and the lack of Eckart-Young-like results that guarantee the utility of rank truncations. 

A notable exception to the above is the {\em tubal tensor framework}, introduced in a series of seminal papers during the 2010s~\cite{kilmer-2011-factor-strat,braman-2010-third-order,kilmer-2013-third-order,kernfeld-2015-tensor-tensor}\footnote{We remark that name ``tubal tensor algebra'' is not a standard and universally acceptable name for this algebra. In fact, there does not seem to be a standard and universally acceptable name for it. We believe the present paper makes it evident why the name ``tubal tensor algebra'' is an appropriate name for this algebra.}. 
It provides a clean algebraic framework for third-order tensor, which is {\em matrix-mimetic}, i.e., most concepts, decompositions, algorithms, results, etc. carry over to tensors in the algebra after some simple adaptions. Importantly, tubal tensor algebra supports a tensor SVD decomposition that is simple to compute~\cite{kilmer-2011-factor-strat}, and for which Eckart-Young-like results hold~\cite{kilmer-2021-tensor-tensor}.

Central to tubal tensor algebra is a closed product operation between two third-order tensors that preserves the order of a tensor: initially the t-product~\cite{kilmer-2011-factor-strat}, later generalized to the $\Mprod$-product~\cite{kernfeld-2015-tensor-tensor,kilmer-2021-tensor-tensor} (here $\matM$ is an invertible matrix that parameterizes the product; see Section~\ref{sec:org9537a00} for formal definitions). The utility of these tensor products is undeniable: they lead to the aforementioned elegant and powerful tensor algebra, and their applicability has been demonstrated in many works, e.g., in producing tensor compressions superior to ones based on other tensor decompositions,  or matricization~\cite{kilmer-2021-tensor-tensor}. 
At the same time, a fundamental question remains: why is $\Mprod$ a ``right'' way to define a tensor-tensor product?  



To answer this question, we turn to the foundations of the tubal tensor framework. One way to view tubal tensor algebra is via a very simple, and in a sense natural, precept:
\begin{center}
\begin{tcolorbox}[colback=red!5!white,colframe=red!75!black,halign=center,hbox]
\emph{View a third-order tensor as a matrix of vectors (called tubes).}
\end{tcolorbox}
\end{center}
In tubal tensor algebra, a tensor is a matrix of tubes, where each tube is a vector. Thus, the resulting algebra is matrix-mimetic by construction. However, for matrix-mimeticity to actually work it is crucial to define operations between tubes correctly. 


The {\em tubal precept}, illustrated graphically in Figure \ref{fig:tensorastubes}, appeared in the literature on tubal tensor algebra from its onset~\cite{Kilmer2008TR,braman-2010-third-order, gleich-2012-power-arnol}, but essentially as an implication the definition of tubal tensor-tensor product formed using terms of block-circulant matrices along with technical tensor-algebraic operations like mode matrix-tensor product, squeezing and twisting. 
An explicit formulation of the tubal precept was made in  \cite{kernfeld-2015-tensor-tensor}, which also established a connection to the tensor-tensor product. However, the tubal precept was still considered as a ``micro'' view of the tensor product, and not as the foundation on which the algebra is built.

\begin{figure}[htbp]
\centering
{\begin{tikzpicture}
    \def\width{0.35}    
    \def\height{\width}   
    \def\depth{2.3*\width}   
    \def\scl{0.8} 
    \def\spacing{.45}     

    \foreach \x in {4} {
    \begin{scope}[shift={(\x*\spacing,0)}]
        \foreach \y in {0,1,2,3} {
            \begin{scope}[shift={( \y * \spacing,0)}]
                \fill[ opacity=0.0, blur shadow={shadow blur steps=10,shadow scale = .82, shadow yshift=\y*\spacing, shadow blur radius=1.5ex}] (\width, 0) -- ++(\width, 0) -- ++(\depth*\scl, \depth) -- ++(-\width, 0) -- cycle;
            \end{scope}
        }
    \end{scope}
    }
    
    \foreach \x in {0,1,2,3,4} {
    \begin{scope}[shift={(\x*\spacing,0)}]
        \foreach \y in {0,1,2,3} {
            \begin{scope}[shift={(0, \y * \spacing)}]
                \fill[blue!60] (0, 0) -- ++(\width, 0) -- ++(0, \height) -- ++(-\width, 0) -- cycle;
                
                \fill[blue!40] (0, \height) -- ++(\width, 0) -- ++(\depth*\scl, \depth) -- ++(-\width, 0) -- cycle;

                \fill[blue!20] (\width, 0) -- ++(0, \height) -- ++(\depth*\scl, \depth) -- ++(0,-\height) -- cycle;

                \draw[thick] (0, 0) -- ++(\width, 0) -- ++(\depth*\scl, \depth) -- ++(0, \height) 
                            -- ++(-\width, 0) -- ++(-\depth*\scl, -\depth) -- cycle; 
                \draw[thick] (0, 0) -- ++(0, \height);    
                \draw[thick] (\width, 0) -- ++(0, \height); 
                \draw[thick] (0, \height) -- ++(\width, 0) -- ++(\depth*\scl, \depth); 
            \end{scope}
        }
    \end{scope}
    }
\end{tikzpicture}}
\caption{\label{fig:tensorastubes}A tensor as a matrix of tubes.}
\end{figure}

In this work, we flip the relation, and view the tubal precept as the underlying paradigm on which tubal products {\em emerges} in a bottom-up manner. In particular, we show that the $\Mprod$ product can be derived from the tubal precept is a series of logical steps, and that {\bf the end construction is unavoidable}. That is, the $\Mprod$ is not only \underline{an appropriate} way for construction tubal product, it is in fact \underline{the only way} to construct such a product that has all the necessary properties for a matrix-mimetic algebra. 



To elaborate, unlike previous works on tubal tensor algebra that start from the tensor-tensor product, and then show that it defines a commutative ring on tubes, we build things from the bottom-up: seek a commutative ring of tubes with desired properties, and reach the tubal product as a by-product. Even though the constructions and results are not new, we believe the bottom-up presentation is useful. 
We then proceed to showing that the construction in the first part is, in a sense, unavoidable if we accept the tubal precept. 
It is the only construction that has all the desired properties (commutativity, unitality, von Neumann regularity, and compatibility with the $\R$-vector space structure; see Section~\ref{sec:org4388465} for formal definitions). This is a new mathematical result, which we argue serves to demystify tubal tensor algebra, at least partially: $\Mprod$  is a ``right'' way to define a tensor product because it is the only way to define such a product under the framework of the tubal precept.



While partly expository in nature — aimed at presenting the foundations of tubal tensor algebra in a cohesive and accessible way —  this paper also addresses theoretical gaps in the tubal tensor framework, proves many new results, and provides justification for the tubal tensor framework central constructions,  thereby helping to demystify this important tensor framework. For completeness and to make the paper self contained as possible, we include proofs also for results that are not new.

\paragraph{Previous work on the foundations of tubal tensor algebra}
As mentioned earlier, the mathematical foundations of the tubal tensor framework was developed in a series of papers. We now expand on this. The t-product between two tensor was first defined in~\cite{Kilmer2008TR} (tech report) and~\cite{kilmer-2011-factor-strat} 
(published paper). In these papers, the t-product is defined in a purely algebraic way, as a process of folding the product of two matrices defined by the input tensors. 
In particular, the view of the t-product as a product between two matrices of tubes is not discussed in~\cite{Kilmer2008TR,kilmer-2011-factor-strat}. In addition, these papers prove several matrix-mimetic results on tensors, prove the existence of t-SVD, and discuss optimal approximations via t-SVD truncations.

Following initial work on the t-product, Braman made the connection between tensors and linear operators on the space of matrices, where linearity is restricted in the sense that it is with respect to scalars that are tubes~\cite{braman-2010-third-order}. To that end, this paper showed that the t-product defines a commutative ring on the space of tubes, and that the t-product between tensors is essentially a regular matrix-matrix product between two matrices of tubes.

Although implicitly present already in the t-SVD algorithm of~\cite{kilmer-2011-factor-strat}, the idea that various tubal tensor algorithms can be implemented by using the corresponding matrix algorithms applied face-wise in the transform domain was formalized in~\cite{kilmer-2013-third-order}. That paper also generalized many other concepts from matrix algebra to tubal tensor algebra, such as range, kernel, Gram-Schmidt, Krylov methods, etc. Power and Arnoldi iterations were also discussed by Gleich et al.~\cite{gleich-2012-power-arnol}.

Kernfeld et al. were the first to consider $\Mprod$ as a generalization of the t-product \cite{kernfeld-2015-tensor-tensor}. However, unlike previous works on the t-product, they assume tubes are over $\C$ not $\R$, likely to allow $\matM$ to be complex.
In contrast, we characterize exactly which complex $\matM$s might be used to define a commutative ring structure over real vector spaces.
Kernfeld et al. also show that $\KM$ (the commutative ring of tubes induced by $\matM$)\linebreak[3] forms a Hilbert C*-algebra over which $\KM^m$ is a Hilbert C*-module.

Eckart-Young-like optimality theorems for tensor-truncations based on the $\Mprod$ was proven in~\cite{kilmer-2021-tensor-tensor}, along with results that establish the superiority of such tensor-based compressions in comparison to matrix based compressions. 

Finally, we mention that the tubal tensor algebra has been generalized to tensors of orders higher than 3 in~\cite{MartinShaferLaRue13}.

\section{Preliminaries}
\label{sec:prelim}

\subsection{Notation}
We use lower case slanted bold letters for vectors ($\vx,\vy,\dots$),
upper case bold letters for matrices ($\matA,\matB,\dots$) and calligraphic
letters for higher order tensors ($\tenC,\tenD,\dots$).  We use $\hadprod$ to denote the Hadamard product (entry-wise product, i.e., $(\vx \hadprod \vy)_i \coloneqq x_i y_i$). In general, we use tensor terminology and notation from~\cite{kolda-2009-tensor-decom-applic}. In particular: 1) we use subscripts to index elements in a matrix or tensor, e.g. $\tenA_{ijk}$, and use MATLAB's $:$ notation to denote all entries of a mode. 2) Frontal slices of a tensor $\tenA$ are written as a matrix with the same letter and single index, e.g., $\matA_k \coloneqq \tenA_{::k}$. 3) Mode-$n$ tensor-matrix product is denoted using $\nmodeprod{n}$, e.g., $\tenA \nmodeprod{3} \matM$.

\subsection{Rings and Fields}
\label{sec:orgf5ad0cc}

To motivate tubal algebra we need some background from abstract algebra. We  only skim this huge topic,  introducing just the concepts that we need for our discussion, and stating motivating results without actually proving them. The following definitions are included for completeness and to fix notation; readers already familiar with abstract algebra may wish to skip ahead to Section~\ref{sec:org2fc628d} or Section~\ref{sec:orga63110b}. 

A \emph{binary composition} \(\cdot\) on a set \(S\) is a function  \(S \times S \ni (a,b) \mapsto a
\cdot b \in S\). 
A \emph{semigroup} is a pair \((S,\cdot)\) in which the binary composition \(\cdot\) is associative, i.e., for all \(a,b,c \in S\), \((a\cdot b)\cdot c = a\cdot (b\cdot c)\). 
A \emph{monoid} $(S,\cdot)$, is a semigroup with an identity element $e \in S$ such that \(e \cdot a = a \cdot e = a\) for all \(a \in S\). 
A \emph{group} $(S,\cdot)$ is a monoid  in which every element has an \emph{inverse}, i.e., for every \(a \in S\), there exists an element \(a^{-1} \in S\) such that \(a \cdot a^{-1} = a^{-1} \cdot a = e\). 
A group $(S, \cdot)$ is an \emph{abelian} or \emph{commutative} group if the  composition is a commutative operation, i.e., for all \(a,b \in S\), \(a \cdot b = b \cdot a\). 

A \emph{ring} is a set \((R,+, \cdot)\) with two binary compositions, \(+\) and \(\cdot\), such that \((R, +)\) is an abelian group, \((R, \cdot)\) is a semigroup, and multiplication is distributive over addition, i.e., for all \(a,b,c \in R\), \(a \cdot (b + c) = a \cdot b + a \cdot c\) and \((a + b) \cdot c = a \cdot c + b \cdot c\).

More verbosely, a ring is a set \(R\) with two binary compositions, \(+\) and \(\cdot\), such that the following \emph{ring axioms} hold:
\begin{itemize}
\item \((R, +)\) is an abelian group, meaning that:
\begin{itemize}
\item For all \(a,b,c \in R\), \((a + b) + c = a + (b + c)\) (associativity of addition).
\item For all \(a,b \in R\), \(a + b = b + a\) (commutativity of addition).
\item There exists an element \(0 \in R\) such that for all \(a \in R\), \(a + 0 = 0 + a = a\) (existence of an additive identity).
\item For every \(a \in R\), there exists an element \(-a \in R\) such that \(a + (-a) = (-a) + a = 0\) (existence of an additive inverse).
\end{itemize}
\item \((R, \cdot)\) is a semigroup, meaning that: 
\begin{itemize}
\item For all \(a,b,c \in R\), \((a \cdot b) \cdot c = a \cdot (b \cdot c)\) (associativity of multiplication).
\end{itemize}
\item Multiplication is distributive over addition, meaning that:
\begin{itemize}
\item For all \(a,b,c \in R\), \(a \cdot (b + c) = a \cdot b + a \cdot c\) and \((a + b) \cdot c = a \cdot c + b \cdot c\).
\end{itemize}
\end{itemize}

Rings can have additional structure (more axioms that they satisfy). A ring \(R\) is a \emph{ring with unity} (or {\em unital ring}) if \((R,\cdot)\) is a monoid, i.e., there exists an element \(1 \in R\) such that for all \(a \in R\), \(1 \cdot a = a \cdot 1 = a\). A ring \(R\) is a \emph{commutative ring} if the multiplication is commutative, i.e., for all \(a,b \in R\), \(a \cdot b = b \cdot a\). A nontrivial ring \(R\) is a \emph{division ring} if every non-zero element has a multiplicative inverse, i.e., for every \(a \in R\), \(a \neq 0\), there exists an element \(a^{-1} \in R\) such that \(a \cdot a^{-1} = a^{-1} \cdot a = 1\). Finally, a \emph{field} is a commutative division ring.

The notion of {\em \(*\)-ring} (read: ``star-ring'') requires the ring to also have a conjugation operation, which captures the notion of complex conjugation in the context of rings. A \(*\)-ring is a ring \(R\) equipped with a unary operation \(a \mapsto a^\sconj\) (called \emph{involution}) such that for all \(a,b \in R\), the following properties hold:
\begin{itemize}
\item \((a + b)^\sconj = a^\sconj + b^\sconj\)
\item \((a \cdot b)^\sconj = b^\sconj \cdot a^\sconj\)
\item \((a^*)^\sconj = a\)
\item If $R$ is a unital ring then \(1^\sconj = 1\)
\end{itemize}

\subsection{Modules and Algebras}
\label{sec:org2fc628d}


Note that a \emph{vector space} is essentially an abelian group endowed with a scalar multiplication defined over a field. Dropping the requirement that the scalars form a field and allowing them to come from a ring produces the broader construct known as a module.
A (left) \emph{module} over a ring \((R, +, \cdot)\), or \emph{\(R\)-module} for short, is an abelian group \((M, \oplus)\) together with a mapping  \(R \times M \ni (r, m) \mapsto r\odot m \in M\), such that
\begin{itemize}
\item \(r \odot (m \oplus m') = (r \odot m) \oplus (r' \odot m)\)
\item \((r + r') \odot m = (r \odot m) \oplus (r' \odot m)\)
\item \((r \cdot r') \odot m = r \odot (r' \odot m)\)
\item \(1 \odot m = m\) where \(1\) is the multiplicative unit over \(R\)
\end{itemize}
A result of these axioms, is that $0_R \odot m = r \odot 0_M = 0_M $ for all $r \in R,m \in M$ where $0_M, 0_R$ denote the additive identity elements of $M$ and $R$. 
It follows that $0_M = (1 - 1)\odot m = m \oplus (-1 \odot m)$, i.e., for any $m \in M$ we have that $(-1)\odot m$ is the additive inverse of $m$. 

The mapping $R \times M \to M$ is called \emph{scalar multiplication}. 
Consistent with the vector space conventions in linear algebra, we use the same symbol, $+$, to denote both the additive operation in the abelian group $M$ and that of the ring $R$. 
When context allows, we omit the $\odot$ notation and write $r \odot m = rm$ for $r \in R,m \in M$.

Vector spaces emerge as a special case of modules over rings in which the ring of scalars is a field. 
A module that is not a vector space behaves very similarly to a vector space, but with some differences. For example, not all modules have a basis, and among those that do, different bases may have different numbers of elements. A \emph{free module} is a module that has a basis. If the underlying ring is commutative (except the zero ring), then every basis of a free module has the same number of elements. This number is called the \emph{rank} of the module. If the rank is finite \(n\), the module is isomorphic to \(R^n\), and linear maps between such modules can be represented by matrices.

Let $R$ be a commutative ring. 
An \emph{associative $R$-algebra} is a ring $(A, + , \cdot)$ that is also an \(R\)-module $(A, +)$ such that 
$r(a\cdot a') = (r  a) \cdot a' =  a \cdot (r a')$ for all $a,a' \in A$ and $r \in R$, and the addition operations in $A$ with respect to its view both as a ring and as a module coincide
\[
(A,+,\cdot) \ni a + b = a + b \in (A, +)~.
\]
An $R$-algebra $(A, +, \cdot )$ is an {\em associative division algebra} if the ring $(A, +, \cdot )$ is a division ring. 

It is also possible to generalize the notion of inner product and norms, but we introduce these later when they become relevant. 

The main observation underlying tubal tensor algebra is that if we replace the field with a commutative ring with a unit (i.e., we drop only the requirement that every non-zero element has a multiplicative inverse), then matrices over that ring behave very much like matrices over a field, and many of the the powerful tools of linear algebra can come to bear in this setting\footnote{Actually, we shall see later that commutative ring with a unit is not enough, and we need another property of the ring for tools like SVD.}. 

\section{Algebra of Tubes}
In tubal tensor algebra we view a tensor \(\tenA \in \R^{m \times p \times n}\) as a matrix of \emph{tubes}, which are elements of \(\R^{1\times 1 \times n} \cong \R^n\). 

Ideally, we would have wanted the set of tubes to exhibit a field structure, since in this case $\tenA$ would have been a matrix of field elements, making all the powerful tools and theory of linear algebra applicable.

However, we have to forgo this due to the following theorem:
\begin{theorem}[Frobenius Theorem]
\label{thm:fro}
Every finite dimensional associative division algebra over the real numbers is isomorphic to \(\R\), \(\C\) or \(\mathbb{H}\) (the quaternions).
\end{theorem}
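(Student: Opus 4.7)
The plan is to prove Frobenius's theorem by decomposing $D$ as an internal direct sum $D = \R \oplus V$, where $V$ is the set of ``purely imaginary'' elements, and then endowing $V$ with a canonical inner product that forces its dimension to be at most $3$.

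First, identify $\R$ with $\R \cdot 1_D \subseteq D$. A standard minimal-polynomial argument shows that every $x \in D$ either lies in $\R$ or satisfies an irreducible real quadratic $t^2 + bt + c$ with $b^2 < 4c$: finite-dimensionality supplies some nonzero polynomial annihilating $x$, and the absence of zero divisors (since $D$ is a division algebra) forces the minimal polynomial to be irreducible. Completing the square, $y := x + b/2$ satisfies $y^2 = (b^2 - 4c)/4 < 0$, so setting
\[
V := \{y \in D : y^2 \in \R_{\leq 0}\}
\]
gives $D = \R + V$. Moreover $\R \cap V = \{0\}$, since a real $r$ with $r^2 \leq 0$ must vanish.

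Next I would show that $V$ is an $\R$-subspace of $D$. Scalar closure is automatic, so the crux, and what I expect to be the main obstacle, is proving that $uv + vu \in \R$ for all $u, v \in V$. I would obtain this by applying the previous characterization to $u+v$ and $u-v$: each either lies in $\R$ or satisfies a real quadratic $t^2 + bt + c$ (respectively $t^2 + b't + c'$), and expanding $(u+v)^2 = u^2 + v^2 + (uv+vu)$ and $(u-v)^2 = u^2 + v^2 - (uv+vu)$ produces the relations $uv + vu + b(u+v) \in \R$ and $-(uv+vu) + b'(u-v) \in \R$; summing them and invoking linear independence of $\{u, v, 1\}$ (the degenerate cases are handled directly) forces $b = b' = 0$. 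Consequently $B(u,v) := -\tfrac{1}{2}(uv + vu)$ is a well-defined symmetric bilinear form on $V$, and it is positive definite because $B(v,v) = -v^2 \geq 0$ with equality only at $v = 0$ (once again by the absence of zero divisors). Closure of $V$ under addition then follows from $(u+v)^2 \in \R$ together with factoring $t^2 - (u+v)^2$ over $\R$ and using that $D$ has no zero divisors.

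Finally, I would bound $\dim V$. If $V = 0$ then $D \cong \R$, and if $\dim V = 1$ then $D \cong \C$. If $\dim V \geq 2$, pick $B$-orthonormal $i, j \in V$ (with $i^2 = j^2 = -1$); orthogonality gives $ij = -ji$, and $k := ij$ then satisfies $k^2 = -1$ and is $B$-orthogonal to both $i$ and $j$, producing an embedded copy of $\mathbb{H}$. To exclude $\dim V \geq 4$, suppose $\ell \in V$ is $B$-orthogonal to $\{i,j,k\}$, so $\ell$ anticommutes with each. The computation
\[
k\ell = (ij)\ell = i(j\ell) = -i(\ell j) = -(i\ell)j = (\ell i)j = \ell(ij) = \ell k,
\]
combined with $k\ell = -\ell k$, forces $2\ell k = 0$, contradicting the absence of zero divisors. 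Hence $\dim V \in \{0, 1, 3\}$, and $D$ is isomorphic to $\R$, $\C$, or $\mathbb{H}$.
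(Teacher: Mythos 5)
The paper cites Frobenius's classification as a known background result and does not supply a proof, so there is no internal argument to compare against; your attempt must be judged on its own. What you have written is the standard proof: decompose $D = \R \oplus V$ with $V = \{y : y^2 \in \R,\ y^2 \leq 0\}$, show $uv + vu \in \R$ for $u, v \in V$ so that $B(u,v) = -\frac{1}{2}(uv+vu)$ is a positive definite symmetric form, extract an embedded $\mathbb{H}$ from any two $B$-orthonormal vectors via $k := ij$, and rule out a fourth orthonormal element $\ell$ by the anticommutation computation $k\ell = \ell k$ versus $k\ell = -\ell k$. The structure and the key calculations are all correct. One step is terser than it should be: the closure of $V$ under addition. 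Having shown $(u+v)^2 \in \R$, you say to factor $t^2 - (u+v)^2$ over $\R$ and use the absence of zero divisors; this rules out $(u+v)^2 > 0$ only once you also observe that the factorization would force $u+v = \pm r \in \R \setminus \{0\}$, and that this is impossible: $v = \pm r - u$ together with $u^2, v^2 \in \R$ forces $2ru \in \R$, hence $u \in \R \cap V = \{0\}$, hence $v \in \R \cap V = \{0\}$, contradicting $u+v \neq 0$. This is a gap in exposition rather than in the plan; the remainder of the argument is sound.
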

These algebras have real dimension 1, 2, and 4, respectively. Moreover, \(\mathbb{H}\) is not commutative, so it is not a field. So, if we insist on imposing a field structure on \(\R^n\) we will be working in dimension (\(n\)) at most 2, which is obviously highly restrictive.

Thus, the best we can hope to do is to use some ``supercharged'' ring that gets us as close as possible to a field. Based on the discussion in the previous section, if we define them as a commutative ring with a unit, then we are almost as good in the sense that modules are free modules, and these behave very much like vector spaces. Thus, in this section we investigate how to define such a ring of tubes. 

\subsection{Identifying Tubes with Polynomials}
\label{sec:orga63110b}

One way to define a ring of tubes is to use polynomials.  We can associate with a vector \(\va \in \R^n\) the degree \(n-1\)  polynomial \(P_\va \coloneqq a_1 + a_2 X + \dots + a_n X^{n-1} \in \R[X]\). We can define addition of tubes as addition of polynomials, and multiplication of tubes as multiplication of polynomials. For example, the tensor  \(\tenX\in\R^{3\times 4 \times 2}\) defined by frontal faces
\begin{equation}
\matX_1 = \left[\begin{array}{cccc}
                1 & 4 & 7 & 10 \\
                2 & 5 & 8 & 11 \\
                3 & 6 & 9 & 12
                \end{array}
          \right],
\quad
\matX_2 = \left[\begin{array}{cccc}
                13 & 16 & 19 & 22 \\
                14 & 17 & 20 & 23 \\
                15 & 18 & 21 & 24
                \end{array}
          \right] 
\end{equation}
might be identified with the matrix
\begin{equation}
\matX = \left[\begin{array}{cccc}
		1 + 13X & 4 + 16X & 7 + 19X & 10 + 22X \\
		2 + 14X & 5 + 17X & 8 + 20X & 11 + 23X \\
		3 + 15X & 6 + 18X & 9 + 21X & 12 + 24X
		\end{array}
	\right].
\end{equation}

\(\R[X]\) is a commutative ring with a unit, so it seems we can work with it. However, there is a problem. Multiplying two polynomials of non-zero degree will result in a polynomial of higher degree. This means that the product of two tensors will be a tensor in which the inward dimension is bigger than the dimension of the two multiplicands. In other words, the tubes are larger. This is not what we want.

To avoid this, we want to work with polynomials of only a fixed degree. We do not have an issue with addition, but we need to define how to multiply two polynomials of degree \(n-1\) that will give us a polynomial of degree \(n-1\) as well. 
One way to do so is to use modular multiplication of polynomials.

\subsection{Defining Tubal Product via Modular Multiplication of Polynomials}
\label{sec:orga100d4a}

Recall that two polynomials \(P\) and \(G\) are congruent modulo a third polynomial \(Q\), written as \(P \equiv G \mod Q\), if their difference is divisible by \(Q\). That is, \(P \equiv G \mod Q\) if and only if there exists a polynomial \(H\) such that \(P = G + QH\). For any polynomial \(P\) there is unique polynomial \(G\) of degree strictly less than the degree of \(Q\) 
such that \(P \equiv G \mod Q\). 
We denote the \emph{remainder} of \(P\) divided by \(Q\),  by \(P \mod Q\). 

We now endow the vector space of polynomials of degree less than \(n\) with the binary composition of modular multiplication modulo a polynomial \(Q\) of degree \(n\) as the multiplication, to obtain a ring, as long as all the roots of \(Q\) are distinct. 

Let's apply this idea to our setting of tubes. We already associated with a tube \(\va \in \R^n\) the polynomial \(P_\va\) of degree at most \(n-1\). We  define the product of two tubes \(\va\) and \(\vb\) as the tube associated with the polynomial \(P_\va \cdot P_\vb \mod Q\), where \(Q\) is some polynomial of degree \(p\). 

We are still left with specifying \(Q\). Let's consider the use of \(Q=X^n - 1\). Why this choice? The reason is that this leads to a very clean and elegant way to multiply tubes:  the Discrete Fourier Transform (DFT) can be used to define the result of the operation, and for efficiency the Fast Fourier Transform (FFT) can be used. A well known result on the connection between DFT and modular multiplication of polynomials (see, e.g., \cite{OppenheimSchaferBuck1999}, Section 8.6) implies that:
\begin{equation}
\label{eq:modmul-dft}
P_\vc = P_\va \cdot P_\vb \mod (X^p - 1) \quad \iff \quad \matF_n \vc = \matF_n \va \hadprod \matF_n \vb.
\end{equation}
where $\matF_n$ is the DFT matrix of size $n$.

\if0
Let \(\omega \coloneqq \exp(-2\pi i/n)\), and denote \(\omega_j \coloneqq \omega^j\). The scalars \(\omega_0, \dots, \omega_{n-1}\) are the \(n\)-th roots of unity, that is \(M(\omega_j) = 0\) for all \(j\). Let \(\matF \in \R^{n\times n}\) be the Vandermonde matrix at the \(n\)-th roots of unity, that is
\begin{equation*}
\matF = \left[\begin{array}{ccccc}
		1 & \omega_0 & \omega^2_0 & \cdots & \omega^{n-1}_0 \\
		1 & \omega_1 & \omega^2_1 & \cdots & \omega^{n-1}_1 \\
                1 & \omega_2 & \omega^2_2 & \cdots & \omega^{n-1}_2 \\
		\vdots & \vdots & \vdots  & \ddots & \vdots \\
                1 & \omega_{n-1} & \omega^2_{n-1} & \cdots & \omega^{n-1}_{n-1} \\
		\end{array}
	\right].
\end{equation*}
The matrix \(\matF\) is the DFT matrix of size \(n\). The inverse of \(\matF\) is \(\matF^{-1} = n^{-1}\matF^\ha\), which is the matrix of the inverse DFT. 

The application of \(\matF\) to a vector has a very elegant interpretation in terms of polynomials: it converts a coefficient representation of a polynomial to its value representation at the \(n\)-th roots of unity. For a vector \(\vx\) and polynomial \(P\), let \(P(\vx)\) be the vector of the values of \(P\) at the elements of \(\vx\), and let \(\vw = \left[\begin{array}{cccc} \omega_0 & \omega_1 & \cdots & \omega_{n-1} \end{array} \right]^\T\). Then,
\begin{equation*}
\matF \va = P_\va(\vw)
\end{equation*}
We now claim that
\begin{equation}
\label{eq:modmul-dft}
P_\vc = P_\va \cdot P_\vb \mod (X^p - 1) \quad \iff \quad \matF \vc = \matF \va \hadprod \matF \vb.
\end{equation}
(recall that \(\hadprod\) denotes the Hadamard product). To see this, \(P_\vc = P_\va \cdot P_\vb \mod (X^n - 1)\) if and only if there exist a polynomial \(H\) such that \(P_\vc = P_\va \cdot P_\vb + H\cdot (X^n - 1)\). However, since \(\omega_j^n = 1\) for all \(j\), we have that \(P_\vc(\omega_j) = P_\va(\omega_j) \cdot P_\vb(\omega_j)\) for all \(j\), so that \(P_\vc = P_\va \cdot P_\vb \mod (X^n - 1)\) if and only if \(P_\vc(\vw) = P_\va(\vw) \hadprod P_\vb(\vw)\), which is equivalent to \(\matF \vc = \matF \va \hadprod \matF \vb\). 
\fi 

Eq. (\ref{eq:modmul-dft}) implies that we can implement the idea that multiplication of two tubes as the tube associated with multiplying the polynomials module \(X^n - 1\) via 
\begin{equation}
\label{eq:tprod}
\va \tprod \vb \coloneqq \matF_n^{-1}(\matF_n \va \hadprod \matF_n \vb)
\end{equation}
By using FFT and inverse FFT to implement the DFT and its inverse, we can multiply two tubes in \(O(n \log n)\) operations. Note that for $\va,\vb\in\R^n$ then $\va \tprod \vb\in\R^n$.

Considering Eq.~\eqref{eq:tprod}, we immediately see that when endowing $\R^n$ with the usual vector addition and $\tprod$ product all the ring axioms hold, and we have a ring. Furthermore, we also see that the $\tprod$ is commutative, so we have a commutative ring.  Eq.~\eqref{eq:tprod} was originally proposed in~\cite{kilmer-2011-factor-strat}, and showing that $\R^n$ endowed with it leads to a commutative ring was originally shown in~\cite{braman-2010-third-order}.

\subsection{A Family of Tubal Products}
\label{sec:org9537a00}

Inspecting Eq. (\ref{eq:tprod}), we can come up with a family of tubal products. The idea is to replace \(\matF_n\) with an invertible matrix \(\matM\), and define the tubal product as
\begin{equation}
\label{eq:Mprod}
\va \Mprod \vb \coloneqq \matM^{-1}(\matM \va \hadprod \matM \vb)
\end{equation}
Eq. \eqref{eq:Mprod} was originally proposed in \cite{kernfeld-2015-tensor-tensor}.

We now discuss what requirements we should impose on $\matM$. Obviously, we want to allow complex matrices, so that $\matM=\matF_n$ is a valid choice. However, we cannot allow any complex matrix, since we want to the result of Eq. \eqref{eq:Mprod} to be real for any $\va,\vb\in \R^n$ (we are defining a product of two real vector that results in a real vector). The following lemma, which is new, characterizes such matrices. The proof is delegated to Section~\ref{sec:proof-winv-realness}.
\begin{lemma}
    Let  \(\matM\in\C^{n\times n}\) be an invertible matrix. 
    Then $\va \Mprod \vb \in \R^n$ for all $\va,\vb \in \R^n$ 
    , if and only if every row of $\matM$ is either real, or obtained by entry-wise complex conjugation of exactly one other row of $\matM$.
\end{lemma}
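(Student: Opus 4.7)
My plan is to re-encode the hypothesis as a closure property of $S := \matM\R^n \subseteq \C^n$: since $\matM$ is invertible, $\va \Mprod \vb \in \R^n$ for all real $\va,\vb$ is equivalent to $\matM\va \hadprod \matM\vb \in \matM\R^n$, i.e., $S$ is closed under the Hadamard product. Writing the rows of $\matM$ as $\vec m_1^\top,\ldots,\vec m_n^\top$, the lemma's row-pairing condition is equivalent to the existence of an involution $\tau$ on $\{1,\ldots,n\}$ with $\overline{\vec m_i} = \vec m_{\tau(i)}$, or in matrix form $\overline{\matM} = \matPi_\tau \matM$ for the permutation matrix $\matPi_\tau$ of $\tau$ (so $\matPi_\tau^2 = \matI$). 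The $(\Leftarrow)$ direction then follows from a direct computation using $\overline{\matM^{-1}} = \matM^{-1}\matPi_\tau$ and the fact that permutation matrices commute with the Hadamard product (permuting both factors yields the same result as permuting the output), which gives $\overline{\matM^{-1}(\matM\va\hadprod\matM\vb)} = \matM^{-1}(\matM\va\hadprod\matM\vb)$.

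For the $(\Rightarrow)$ direction, assume $S$ is Hadamard-closed. My first step is to establish that $\vec 1 \in S$. Pick any $\vec y \in S$ with all entries nonzero; such $\vec y$ exists because the set of $\matM\va$ for which $\vec m_i^\top \va = 0$ for some $i$ is a finite union of proper real subspaces of $S$. The $\R$-linear map $L_{\vec y}: S \to S$ defined by $L_{\vec y}(\vec x) = \vec y \hadprod \vec x$ is well-defined by the closure of $S$ under $\hadprod$, and is injective since the Hadamard product in $\C^n$ has no zero divisors when one factor has all nonzero entries. Being an injective endomorphism of the finite-dimensional space $S$, $L_{\vec y}$ is bijective, so some $\vec e \in S$ satisfies $\vec y \hadprod \vec e = \vec y$, which forces $e_i = 1$ for every $i$ and hence $\vec e = \vec 1 \in S$. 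Therefore $S$ is a unital commutative $\R$-subalgebra of $(\C^n,+,\hadprod)$, and it is reduced — hence semisimple — since $\C^n$ contains no nonzero Hadamard-nilpotents.

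By the commutative Wedderburn--Artin theorem combined with the Frobenius theorem (Theorem~\ref{thm:fro}), $S \cong \R^r \times \C^s$ as $\R$-algebras for some $r,s$ with $r + 2s = n$. A direct count shows this algebra admits exactly $n$ $\R$-algebra homomorphisms into $\C$: the canonical inclusion $\R \hookrightarrow \C$ on each of the $r$ real factors, plus a conjugate pair (identity and conjugation) on each of the $s$ complex factors. On the other hand, the coordinate projections $\varphi_i: S \to \C$ given by $\varphi_i(\matM\va) = \vec m_i^\top \va$ are $n$ $\R$-algebra homomorphisms, pairwise distinct because the rows of $\matM$ are linearly independent. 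Hence $\{\varphi_1,\ldots,\varphi_n\}$ is the complete list of $\R$-algebra homomorphisms $S \to \C$. Since complex conjugation is an $\R$-algebra automorphism of $\C$, the composition $\overline{\varphi_i}$ is again in this list, inducing an involution $\tau$ on $\{1,\ldots,n\}$ with $\overline{\varphi_i} = \varphi_{\tau(i)}$; unfolding this gives $\overline{\vec m_i} = \vec m_{\tau(i)}$ for all $i$, which is precisely the claimed row structure, with uniqueness of the conjugate partner following from the distinctness of the rows of $\matM$. The subtlest step is showing $\vec 1 \in S$, without which $S$ would only be a rng and the structure theorem could not be invoked; the $L_{\vec y}$-trick resolves this cleanly.
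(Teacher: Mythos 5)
Your proof is correct, but it takes a genuinely different route from the paper's. The paper argues elementarily: from the realness of every $\matR_\va = \matM^{-1}\diag(\matM\va)\matM$, $\va\in\R^n$, it derives the intertwining relation $\matN\diag(\matM_{:k}) = \diag(\overline{\matM}_{:k})\matN$ for $\matN := \overline{\matM}\matM^{-1}$ and all $k$, reads off from the $(i,j)$ entries that $\matN_{ij}\neq 0$ forces rows $i$ and $j$ of $\matM$ to be conjugate, and then uses invertibility of $\matM$ to conclude that $\matN$ is monomial (one nonzero per row), which is exactly the claimed row structure. You instead recast the hypothesis as $S := \matM\R^n$ being Hadamard-closed, prove $\vec 1 \in S$ via the injectivity-of-$L_{\vec y}$ trick so $S$ is unital, observe $S$ is a reduced finite-dimensional commutative $\R$-algebra, and invoke Wedderburn--Artin plus Frobenius to get $S \cong \R^r\times\C^s$; the row structure then drops out by matching the $n$ coordinate projections against the abstract count of $\R$-algebra homomorphisms to $\C$ and noting that conjugation permutes them. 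Your argument is conceptually illuminating — it explains \emph{why} the rows must pair up (because $S$ is a product of copies of $\R$ and $\C$) — and it is essentially the ``shorter but less elementary'' route the paper sketches in the Additional Discussion subsection for the \emph{main} theorem (Theorem~\ref{thm:main}), here applied directly to this lemma. The paper's proof, by contrast, is self-contained and requires nothing beyond linear algebra, which is appropriate for a preliminary lemma used before the heavier machinery is introduced. One small caveat worth being explicit about: the step ``reduced hence semisimple'' relies on $S$ being Artinian, which holds because $S$ is a finite-dimensional $\R$-algebra; you invoke finite-dimensionality elsewhere but should make the dependence clear at that point.
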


We now show that the $\Mprod$ product, together with the usual vector-space addition, endows $\R^n$ with a commutative ring structure, thus, form a ring of tubes. 
\begin{theorem}[Proposition 4.2 in \cite{kernfeld-2015-tensor-tensor}]
Let \(\matM\in\C^{n\times n}\) be an invertible matrix in which every row is either real, or is conjugate to exactly one other row of $\matM$. The set \(\R^n\) equipped with the binary composition \(\Mprod\) defined in Eq. \eqref{eq:Mprod} and the usual vector addition \(+\) is a commutative ring with a unit. We denote this ring by \(\KM\).
\end{theorem}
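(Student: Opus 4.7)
The plan is to transport the commutative ring structure that the Hadamard product $\hadprod$ induces on $\C^n$ back to $\R^n$ through the invertible linear map $\matM$. Concretely, the map $\phi:\R^n \hookrightarrow \C^n$ given by $\phi(\va)\coloneqq \matM\va$ satisfies, by definition of $\Mprod$, the identity $\phi(\va \Mprod \vb) = \phi(\va)\hadprod \phi(\vb)$, and $\phi$ is additive and injective. Since $(\C^n, +, \hadprod)$ is trivially a commutative ring with unit $\vec{1}=(1,\dots,1)^\T$, the preceding lemma (which guarantees that $\va \Mprod \vb \in \R^n$ whenever $\va,\vb \in \R^n$) ensures that all identities involving $+$ and $\Mprod$ can be pulled back from $\C^n$ to $\R^n$ without leaving $\R^n$.

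I would carry out the verification axiom by axiom. First, $(\R^n,+)$ is already a well-known abelian group, so no work is required there. Next, commutativity and associativity of $\Mprod$ follow directly from the analogous properties of $\hadprod$ together with the linearity of $\matM^{-1}$: for instance,
\[
(\va \Mprod \vb)\Mprod \vc
= \matM^{-1}\bigl(\matM\va \hadprod \matM\vb \hadprod \matM\vc\bigr)
= \va \Mprod (\vb \Mprod \vc),
\]
where we have used $\matM(\matM^{-1}\vx)=\vx$ inside the outer parentheses. Distributivity is equally immediate from the bilinearity of $\hadprod$ combined with the linearity of $\matM$ and $\matM^{-1}$.

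The only non-routine step is the existence of a multiplicative unit in $\R^n$. The unique candidate is $\unitM \coloneqq \matM^{-1}\vec{1}$, since $\ve \Mprod \va = \va$ for all $\va\in\R^n$ forces $\matM\ve \hadprod \matM\va = \matM\va$, hence $\matM\ve = \vec{1}$. The question is whether $\matM^{-1}\vec{1}$ is real. Here I would invoke the structural description of $\matM$ given by the preceding lemma: the row-conjugation condition is equivalent to the existence of a permutation matrix $\matP$ such that $\overline{\matM} = \matP\matM$, with $\matP$ swapping each complex row with its conjugate partner and fixing real rows. In particular, $\matP\vec{1} = \vec{1}$. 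Consequently,
\[
\overline{\unitM} = \overline{\matM}^{-1}\vec{1} = (\matP\matM)^{-1}\vec{1} = \matM^{-1}\matP^{-1}\vec{1} = \matM^{-1}\vec{1} = \unitM,
\]
so $\unitM \in \R^n$. A short direct computation then confirms $\unitM \Mprod \va = \va$ for every $\va \in \R^n$.

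\paragraph{Main obstacle.}
The computational content of the proof is light: almost everything is bookkeeping on top of the lemma that precedes it. The only genuinely non-trivial point is realness of the unit, which is why I would organize the exposition to foreground the permutation-matrix reformulation of the row-conjugation hypothesis. Once that is in hand, the argument for $\overline{\unitM}=\unitM$ is a one-line calculation, and all remaining ring axioms reduce to their well-known counterparts for the Hadamard product on $\C^n$.
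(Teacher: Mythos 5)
Your proof is correct and follows essentially the same skeleton as the paper's: verify commutativity and associativity by pushing through $\matM$ to the Hadamard product and back, get distributivity from bilinearity, and identify the unit as $\unitM = \matM^{-1}\vec{1}$. The one place where you diverge is the argument that $\unitM$ is real. The paper appeals to its Lemma~\ref{lem:struct-Minv}, which describes the column structure of $\matM^{-1}$ (columns corresponding to real rows are real, columns corresponding to conjugate-pair rows are themselves conjugate pairs), and notes that $\unitM$ is the sum of the columns of $\matM^{-1}$. You instead reformulate the row hypothesis as $\overline{\matM} = \matP\matM$ for a permutation matrix $\matP$ fixing real rows and swapping conjugate partners, and then compute $\overline{\unitM} = (\matP\matM)^{-1}\vec{1} = \matM^{-1}\matP^{-1}\vec{1} = \matM^{-1}\vec{1} = \unitM$. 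That calculation is correct — permutation matrices fix $\vec{1}$ — and is arguably leaner for this particular purpose, since it avoids the full structural lemma about $\matM^{-1}$ that the paper needs anyway for other results (e.g.\ realness of $\va^\sconj$ and of $\va^\winv$). One small remark: your observation that $\matM\ve=\vec{1}$ is forced does implicitly use that every row of $\matM$ is nonzero (which holds by invertibility), so that for each index $i$ there exists $\va\in\R^n$ with $(\matM\va)_i\neq 0$; stating that explicitly would make the uniqueness-of-candidate step watertight.
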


\begin{proof}
\(\KM\) is an abelian group with respect to the addition due to the usual properties of vector addition. We immediately see from the definition of \(\Mprod\) that it is commutative. We need to show that \(\Mprod\) is associative, distributive over addition, and has a unit.
\begin{itemize}

\item {\bf Associativity:} For all \(\va, \vb, \vc \in \KM\), we have
\begin{align*}
(\va \Mprod \vb) \Mprod \vc &= \matM^{-1}(\matM(\matM^{-1}(\matM \va \hadprod \matM \vb)) \hadprod \matM \vc) \\
&= \matM^{-1}((\matM \va \hadprod \matM \vb) \hadprod \matM \vc) \\
&= \matM^{-1}(\matM \va \hadprod (\matM \vb \hadprod \matM \vc)) \\
&= \va \Mprod (\vb \Mprod \vc)
\end{align*}
where the second equality uses \(\matM\matM^{-1} = \matI\), and the third uses associativity of the Hadamard product. 

\item {\bf Distributive over addition:} For all \(\va, \vb, \vc \in \KM\), we have \(\va \Mprod (\vb + \vc) = \matM^{-1}(\matM \va \hadprod \matM (\vb + \vc)) = \matM^{-1}(\matM \va \hadprod (\matM \vb + \matM \vc)) = \matM^{-1}(\matM \va \hadprod \matM \vb + \matM \va \hadprod \matM \vc) = \va \Mprod \vb + \va \Mprod \vc\).

\item {\bf Unit:} It is easy to verify that the unit element is \(\unitM \coloneqq \matM^{-1}\ve\) where $\ve$ is the $n$-dimensional vector of all ones. However, we still need to argue that it is real, so in the ring. This is an immediate corollary of Lemma~\ref{lem:struct-Minv} in Section~\ref{sec:proof-winv-realness}.
\end{itemize}
\end{proof}

\begin{example}[Complex numbers as a tubal ring]
\label{example:C-as-KM}
Consider the case \(n=2\). Since \(\R^2\) is isomorphic to \(\C\) we can use the multiplication operation defined by complex numbers to get a field:
\begin{equation*}
\begin{bmatrix} a \\ b \end{bmatrix} \cdot \begin{bmatrix} c \\ d \end{bmatrix} =
\begin{bmatrix} ac - bd \\ ad + bc \end{bmatrix}
\end{equation*}
This operation is actually \(\Mprod\) with 
\begin{equation*}
\matM = \begin{bmatrix} 1 & \si \\ 1 & -\si \end{bmatrix}
\end{equation*}
The use of complex numbers in \(\matM\) for building a $\Mprod$ which corresponds to complex product is unavoidable. It can be shown that no such real \(\matM\) exists.
\end{example}

\begin{proposition}
    The ring \(\KM\) is not a field, unless \(n=1\) or \(n=2\), since there are elements that do not have a multiplicative inverse (so it is not a division ring). 
\end{proposition}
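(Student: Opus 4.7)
The plan is to characterize invertibility in $\KM$ directly from Eq.~\eqref{eq:Mprod}, then use a simple dimension argument to produce non-invertible non-zero elements whenever $n \geq 3$.

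First I would establish the following criterion: a tube $\va \in \KM$ has a multiplicative inverse if and only if every entry of $\matM\va$ is non-zero. Indeed, $\va \Mprod \vb = \unitM$ is equivalent to $\matM\va \hadprod \matM\vb = \matM\unitM = \ve$, the vector of all ones. If some entry $(\matM\va)_i$ vanishes, the $i$-th coordinate of the left-hand side is $0$ for every choice of $\vb$, contradicting the right-hand side, so no inverse exists. Conversely, if all entries of $\matM\va$ are non-zero, setting $\vb \coloneqq \matM^{-1}(\ve \oslash \matM\va)$ (entry-wise reciprocal) produces the required inverse; one checks with the realness lemma that this $\vb$ is indeed in $\R^n$, since $\ve \oslash \matM\va$ inherits the same conjugate-row symmetry as $\matM\va$ does.

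Next, assuming $n \geq 3$, I would exhibit a non-zero $\va \in \R^n$ such that $(\matM\va)_i = 0$ for at least one row index $i$. Fix any row $i$ of $\matM$. If that row is real, the equation $(\matM\va)_i = 0$ is a single real linear equation in $n$ real unknowns, so its solution space has dimension at least $n-1 \geq 2$, and in particular contains a non-zero vector. If the row is complex, write $\matM_{i,:} = \vu + \si\vv$ with $\vu,\vv\in\R^n$; then $(\matM\va)_i = 0$ is equivalent to the pair of real equations $\vu^\T \va = 0$ and $\vv^\T \va = 0$, whose common solution space has real dimension at least $n-2 \geq 1$. Either way, a non-zero real $\va$ with $(\matM\va)_i = 0$ exists, and by the criterion above this $\va$ has no multiplicative inverse in $\KM$, so $\KM$ is not a division ring, hence not a field.

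The argument is routine once the invertibility criterion is in hand; the only mildly delicate point is treating the complex-row case, which is resolved by splitting into real and imaginary parts to get two real linear constraints. The cases $n=1$ and $n=2$ are excluded because there the kernel dimension bound $n-2$ drops to $0$ (or lower), matching Example~\ref{example:C-as-KM}, where the $n=2$ choice of $\matM$ with rows $(1,\si)$ and $(1,-\si)$ yields the field $\C$.
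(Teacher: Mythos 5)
Your proof is correct, but it takes a genuinely different and more elementary route than the paper's. The paper simply notes that $\KM$ is a finite-dimensional associative $\R$-algebra, observes that if it were a field it would be a finite-dimensional associative division algebra, and then invokes Frobenius' Theorem (Theorem~\ref{thm:fro}) plus commutativity to conclude $\dim \KM \in \{1,2\}$. Your argument instead derives an explicit invertibility criterion — $\va$ is a unit in $\KM$ if and only if every entry of $\matM\va$ is nonzero — and then uses a real-linear dimension count to exhibit a nonzero $\va$ with a vanishing transform entry whenever $n\geq 3$. What the paper's approach buys is brevity: one black-box theorem and the whole statement falls out. What your approach buys is explicitness and slightly more information: you identify exactly which tubes are invertible (this coincides with the condition $\va^\winv \Mprod \va = \unitM$, i.e., that the weak inverse is an actual inverse), and as a side effect your dimension count shows that for $n=2$ the ring is a field only when the two rows of $\matM$ are a nonreal conjugate pair, which is a sharper statement than the proposition itself. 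One small imprecision in your concluding remark: the bound you quote as ``$n-2$ drops to $0$'' applies only to a complex row; for a real row the null space has dimension $n-1 \geq 1$ already when $n=2$, so your argument actually rules out a field structure whenever $\matM$ has a real row and $n\geq 2$. That is still consistent with the proposition (which allows but does not assert a field for $n\leq 2$), and it matches Example~\ref{example:C-as-KM}, so no gap results, but the phrasing slightly understates what you proved.
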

\begin{proof}
    It is easy to verify that $\KM$ defines an associative algebra over $\R$, with the scalar product defined by the usual scalar product. Since $\KM = \R^n$ as a set and the algebra structure is compatible with the standard $\R$-vector space structure, we have $\dim_\R(\KM) = n$. Thus, if $\KM$ were a field, it would be a finite dimensional associative division algebra over $\R$, and by the Frobenius Theorem (Theorem~\ref{thm:fro}) isomorphic to $\R$, $\C$ or $\mathbb{H}$. However, it cannot be isomorphic to $\mathbb{H}$ since $\KM$ is commutative, while $\mathbb{H}$ is not. So, it must be isomorphic to $\R$ or $\C$. These have dimension, as a real vector space, of $1$ or $2$, so $n$ must be $1$ or $2$.
\end{proof}

We also define a partial order on \(\KM\). For \(\va, \vb \in \KM\), we write \(\va \Mleq \vb\) if \(\matM(\vb - \va)\) is real and element-wise non-negative. It is easy to verify that this is a partial order.

We already define \(\KM\) so that it is a commutative ring. 
To add additional structure, and make \(\KM\) a \(*\)-ring, we define the \emph{\(\Mprod\)-conjugate} of a tube \(\va\) as the tube 
\begin{equation}
\label{eq}
\va^\sconj \coloneqq \matM^{-1}\overline{\matM \va}
\end{equation}
where \(\overline{\cdot}\) denotes the complex conjugate elementwise. For this definition to make sense, we need that $\va^\sconj\in\R^n$ for every $\va\in\R^n$. This holds if $\matM^{-1} \overline{\matM}$ is a real matrix. We show this in Section~\ref{sec:proof-winv-realness}. We now show that endowing \(\KM\) with the \(\Mprod\)-conjugate operation results in a \(\star\)-ring. 

\begin{proposition}
\label{prop:KM-star-ring}
\(\KM\) is a \(*\)-ring with respect to the \(\Mprod\)-conjugate operation.
\end{proposition}
\begin{proof}
We verify the four \(*\)-ring axioms:
\begin{itemize}
\item {\bf Involutive:} \((\va^\sconj)^\sconj = \matM^{-1}\overline{\matM(\matM^{-1}\overline{\matM\va})} = \matM^{-1}\overline{\overline{\matM\va}} = \matM^{-1}\matM\va = \va\).
\item {\bf Additive:} \((\va + \vb)^\sconj = \matM^{-1}\overline{\matM(\va+\vb)} = \matM^{-1}(\overline{\matM\va} + \overline{\matM\vb}) = \va^\sconj + \vb^\sconj\).
\item {\bf Anti-multiplicative:} \((\va \Mprod \vb)^\sconj = \matM^{-1}\overline{\matM\va \hadprod \matM\vb} = \matM^{-1}(\overline{\matM\va} \hadprod \overline{\matM\vb}) = \va^\sconj \Mprod \vb^\sconj\). Since \(\KM\) is commutative, this equals \(\vb^\sconj \Mprod \va^\sconj\).
\item {\bf Unit-preserving:} \(\unitM^\sconj = \matM^{-1}\overline{\matM\unitM} = \matM^{-1}\overline{\mathbf{1}} = \matM^{-1}\mathbf{1} = \unitM\).
\end{itemize}
\end{proof}

\subsection{Weak Inverses}
\label{sec:orgb12cbf2}

\(\KM\) is almost a field: It has a unit and it is commutative, yet, it does not contain multiplicative inverse for all non-zero element. 
That is, \(\KM\) is not a division ring. 
However, we shall now see that every element in \(\KM\) has a \emph{weak inverse}.
 
For a vector \(\va\), let us denote by \(\va^\pinv\) the vector obtained by inverting non-zero entries, and keeping zero entries zero. That is,
\begin{equation*}
(\va^\pinv)_i = \begin{cases} a^{-1}_i & \text{if } a_i \neq 0 \\ 0 & \text{if } a_i = 0 \end{cases}
\end{equation*}
Next, define 
\begin{equation}
\label{eq:def-winv}
\va^\winv \coloneqq \matM^{-1}(\matM \va)^\pinv
\end{equation}
\begin{lemma}
\label{lem:winv-realness}
Let $\matM\in\C^{n\times n}$ be an invertible matrix. Suppose that every row of $\matM$ is either real, or is conjugate to exactly one other row of $\matM$. Then $\vx^{\winv}$ is real, for every $\vx\in\R^n$.
\end{lemma}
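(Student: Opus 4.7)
The plan is to use the fact that the row-pair structure of $\matM$ induces a corresponding conjugate-pair structure on $\matM\vx$ for real $\vx$, that this structure is preserved by the entrywise pseudoinverse $(\cdot)^{\pinv}$, and that a vector with this structure is mapped back to a real vector by $\matM^{-1}$.

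First, I would encode the hypothesis combinatorially. Let $\matP\in\R^{n\times n}$ be the permutation matrix that swaps each pair of indices $(i,j)$ whose corresponding rows of $\matM$ are complex conjugates of one another, and fixes every index whose row is real. The hypothesis on $\matM$ is then equivalent to the single identity $\overline{\matM} = \matP\matM$, and $\matP^2 = \matI$.

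Next, I would check that for any $\vx\in\R^n$, the vector $\vy \coloneqq \matM\vx$ satisfies the symmetry $\overline{\vy} = \matP\vy$, since $\overline{\matM\vx} = \overline{\matM}\,\vx = \matP\matM\vx$. I would then verify entrywise that the pseudoinverse operation preserves this symmetry: for each index $i$, writing $j = \matP(i)$, we have $\overline{y_i} = y_j$, and $(\cdot)^{\pinv}$ commutes with complex conjugation (both on nonzero scalars, where $\overline{z^{-1}} = \overline{z}^{-1}$, and on $0$, where $0^{\pinv}=0$). Hence $\overline{(\vy^{\pinv})_i} = \overline{(y_i)^{\pinv}} = (\overline{y_i})^{\pinv} = (y_j)^{\pinv} = (\vy^{\pinv})_j = (\matP\vy^{\pinv})_i$, so $\overline{\vy^{\pinv}} = \matP\vy^{\pinv}$.

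Finally, I would close the loop by showing that any vector $\vz\in\C^n$ with $\overline{\vz} = \matP\vz$ satisfies $\matM^{-1}\vz\in\R^n$. From $\overline{\matM} = \matP\matM$, taking inverses yields $\overline{\matM^{-1}} = \matM^{-1}\matP^{-1} = \matM^{-1}\matP$. Therefore
\begin{equation*}
\overline{\matM^{-1}\vz} = \overline{\matM^{-1}}\,\overline{\vz} = \matM^{-1}\matP \cdot \matP\vz = \matM^{-1}\vz,
\end{equation*}
using $\matP^2 = \matI$. Applying this to $\vz = (\matM\vx)^{\pinv}$ gives $\vx^{\winv} = \matM^{-1}(\matM\vx)^{\pinv}\in\R^n$.

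The only mildly delicate step is the entrywise one: I must be careful that the conjugate-pair structure of $\matM\vx$ is exactly what $(\cdot)^{\pinv}$ needs in order to stay inside the symmetry class (in particular, that zero entries at paired indices map to zeros at paired indices). Everything else is formal manipulation with the identity $\overline{\matM} = \matP\matM$, which is likely the same computation underlying the already-referenced Lemma~\ref{lem:struct-Minv}, so it would be natural to invoke that lemma to streamline the final step rather than re-deriving $\overline{\matM^{-1}} = \matM^{-1}\matP$ from scratch.
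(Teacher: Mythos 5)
Your proof is correct, and it takes a genuinely different route from the paper. The paper's proof is constructive: it builds an explicit real basis $\vp_1,\dots,\vp_n$ adapted to the real/conjugate row structure of $\matM$, exhaustively computes all products $\vp_j \Mprod \vp_k$ and conjugates $\vp_j^\sconj$, writes down a candidate real vector $\vy$ in that basis, verifies the four Moore--Penrose identities for $\matR_\va$ and $\matR_\vy$, and finally invokes uniqueness of the Moore--Penrose pseudoinverse to conclude $\va^\winv = \vy$. Your argument instead encodes the row hypothesis as the single matrix identity $\overline{\matM}=\matP\matM$ with $\matP$ an involutive permutation, and then shows that each stage of the formula $\vx^\winv = \matM^{-1}(\matM\vx)^\pinv$ preserves the symmetry class $\{\vz : \overline{\vz}=\matP\vz\}$ (equivalently, the image of $\R^n$ under $\matM$): $\matM\vx$ lands in it because $\vx$ is real, $(\cdot)^\pinv$ preserves it entrywise since it commutes with complex conjugation and maps $0\mapsto 0$, and $\matM^{-1}$ sends it back to $\R^n$ because $\overline{\matM^{-1}}=\matM^{-1}\matP$. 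Both approaches are sound; yours is considerably shorter and avoids the case analysis and the detour through Moore--Penrose uniqueness, while the paper's construction has the side benefit of producing an explicit basis-level formula for $\va^\winv$. One small note: you correctly flag the zero-entry case as the only delicate step, and you handle it — if $y_i=0$ then the conjugate-pair constraint forces $y_{\pi(i)}=0$ too, so $(\cdot)^\pinv$ keeps the pair consistent. Also, you do not actually need to cite Lemma~\ref{lem:struct-Minv}: your derivation of $\overline{\matM^{-1}}=\matM^{-1}\matP$ directly from $\overline{\matM}=\matP\matM$ and $\matP^2=\matI$ is self-contained and in fact reproves that lemma along the way.
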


The proof is delegated to Section~\ref{sec:proof-winv-realness}. We now show that \(\va^\winv\) acts like a weak inverse. In fact, it acts like the Moore-Penrose pseudoinverse of matrices.
\begin{lemma}
For all \(\va \in \KM\), \(\va^\winv\in\KM\) is the unique element of \(\KM\) for which the following four properties hold:
\begin{itemize}
\item \(\va \Mprod \va^\winv \Mprod \va = \va\)
\item \(\va^\winv \Mprod \va \Mprod \va^\winv = \va^\winv\)
\item \((\va \Mprod \va^\winv)^\sconj = \va \Mprod \va^\winv\)
\item \((\va^\winv \Mprod \va)^\sconj = \va^\winv \Mprod \va\)
\end{itemize}
\end{lemma}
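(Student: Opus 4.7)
The plan is to push the entire problem through the invertible linear map $\matM$ into a ``spectral'' domain where everything decouples componentwise. Concretely, for any $\vu \in \KM$ let $\hat\vu \coloneqq \matM\vu$. Directly from the definitions one has $\matM(\vu \Mprod \vv) = \hat\vu \hadprod \hat\vv$ and $\matM(\vu^\sconj) = \overline{\hat\vu}$, and by the definition of $\va^\winv$, $\matM(\va^\winv) = \hat\va^\pinv$. So if we apply $\matM$ to each of the four identities in the statement, every occurrence of $\Mprod$ becomes a Hadamard product and every occurrence of $\sconj$ becomes entrywise complex conjugation. Since $\matM$ is invertible, the four identities are equivalent to their spectral counterparts.

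Next, I would verify existence by substituting $\vb = \va^\winv$ and checking the spectral identities componentwise. Entrywise, they read
\begin{equation*}
\hat a_i \cdot \hat a_i^{\pinv} \cdot \hat a_i = \hat a_i, \quad
\hat a_i^{\pinv} \cdot \hat a_i \cdot \hat a_i^{\pinv} = \hat a_i^{\pinv}, \quad
\overline{\hat a_i \cdot \hat a_i^{\pinv}} = \hat a_i \cdot \hat a_i^{\pinv}, \quad
\overline{\hat a_i^{\pinv} \cdot \hat a_i} = \hat a_i^{\pinv} \cdot \hat a_i.
\end{equation*}
Each is trivial: if $\hat a_i = 0$ both sides vanish, otherwise $\hat a_i \cdot \hat a_i^{\pinv} = \hat a_i^{\pinv} \cdot \hat a_i = 1$, which is real. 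Realness of the resulting $\va^\winv$ itself is not an issue here because it is supplied by Lemma~\ref{lem:winv-realness}.

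For uniqueness, suppose $\vb \in \KM$ satisfies all four properties. Apply $\matM$ and look entry by entry: writing $\alpha_i \coloneqq \hat a_i$ and $\beta_i \coloneqq \hat b_i \in \C$, the four conditions become the scalar Moore--Penrose axioms $\alpha_i\beta_i\alpha_i = \alpha_i$, $\beta_i\alpha_i\beta_i = \beta_i$, and $\alpha_i\beta_i,\beta_i\alpha_i \in \R$. Over $\C$ viewed as a $*$-ring, it is standard and elementary that the scalar Moore--Penrose pseudoinverse is unique: the first identity forces $\beta_i = 0$ when $\alpha_i = 0$, and when $\alpha_i \neq 0$, multiplying the first identity by $\alpha_i^{-1}$ twice forces $\beta_i = \alpha_i^{-1}$. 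Hence $\beta_i = \alpha_i^{\pinv}$ for every $i$, i.e.\ $\matM\vb = (\matM\va)^{\pinv}$, and invertibility of $\matM$ gives $\vb = \va^\winv$.

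I do not expect any serious obstacle: once the spectral reformulation is in place the argument is purely componentwise scalar arithmetic. The only mildly delicate point is keeping the uniqueness argument honest, namely pointing out that uniqueness is being asserted inside $\KM = \R^n$ but is actually proven in the larger space $\C^n$ via $\matM$, which is stronger and thus suffices.
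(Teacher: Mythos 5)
Your argument is correct in outline and takes a genuinely different route from the paper. The paper establishes uniqueness by appealing to a general fact about commutative von Neumann regular rings (uniqueness of the quasi-inverse satisfying the first two identities, citing Lombardi--Quitt\'e) and then treats the two conjugation conditions separately; you instead transport all four identities through $\matM$ and verify existence and uniqueness by purely componentwise scalar arithmetic over $\C$, which is more elementary and self-contained. The paper's route has the minor advantage of showing that the first two axioms already pin down the element, so the $*$-conditions play no role in uniqueness. You also correctly flag the subtlety that your uniqueness is proved a priori in $\C^n$ rather than in $\KM = \R^n$, which is only a stronger conclusion.

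There is one slip in the uniqueness step: you claim that \emph{the first identity} forces $\beta_i = 0$ when $\alpha_i = 0$. It does not --- with $\alpha_i = 0$ the first identity $\alpha_i\beta_i\alpha_i = \alpha_i$ degenerates to $0 = 0$ and places no constraint on $\beta_i$. It is the \emph{second} identity $\beta_i\alpha_i\beta_i = \beta_i$ that forces $\beta_i = 0$ in that case. The rest of the case split (dividing the first identity by $\alpha_i$ twice when $\alpha_i \neq 0$) is fine, and with this one-word correction the argument goes through.
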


\begin{proof}
The first two properties follow immediately from the fact that for every $\vx$, we have \(\vx \hadprod \vx^\pinv \hadprod \vx = \vx\) and \(\vx^\pinv \hadprod \vx \hadprod \vx^\pinv = \vx^\pinv\).  The first property makes the ring a von Neumann regular ring (see Definition~\ref{def:vnr}), and for commutative von Neumann rings we have a unique element for which both properties hold~\cite{LombardiQuitte2015}.  As for the last two properties, note that \(\matM(\va \Mprod \va^\winv) = \matM\va \hadprod (\matM\va)^\pinv\), whose entries are either \(0\) or \(1\), and hence real. Since real elements are self-conjugate (if \(\matM\vb\) is real then \(\vb^\sconj = \matM^{-1}\overline{\matM\vb} = \matM^{-1}\matM\vb = \vb\)), the conjugation conditions \((\va \Mprod \va^\winv)^\sconj = \va \Mprod \va^\winv\) and \((\va^\winv \Mprod \va)^\sconj = \va^\winv \Mprod \va\) are automatically satisfied. Uniqueness of the element satisfying all four conditions follows from the uniqueness under the first two conditions alone, since the last two do not impose additional constraints.
\end{proof}

For an element \(\va \in \KM\), the element \(\va^\winv\) acts like a weak inverse in the following sense. If \(\va\) has a multiplicative inverse, then \(\va^\winv\) is that inverse. If \(\va\) does not have a multiplicative inverse, then \(\va^\winv\) is the closest thing to it, in the sense that it acts in many ways like a multiplicative inverse. The fact that \(\KM\) has weak inverses makes the it an even-closer-to-a-field ring.

\begin{definition}
\label{def:vnr}
A ring \(R\) is called a \emph{von Neumann regular ring} if for every element \(a \in R\) there exists an element \(x \in R\) such that \(a = a \cdot x \cdot a\).
\end{definition}

We remark that it is known that for a commutative von Neumann regular ring, for every element \(a\in\R\) there exists a \emph{unique} element \(x\in R\) such that \(a = a\cdot x\cdot a\) and \(x = x\cdot a \cdot x\). In the above we have shown that additional properties hold as well.

\subsection{\texorpdfstring{\(\Mprod\)}{M-product} is Unavoidable}
\label{sec:org4388465}

Although we reached the definition of \(\KM\) in some logical progression of ideas, it is plausible that another definition could have been reached. In fact, even \(\Mprod\) itself might be derived via other reasoning (see next subsection). So, one might ask why this form of tubal product, and not some other one? We now claim that, in a sense, \(\Mprod\) is unavoidable, by stating the paper's main result: every ring on $\R^n$ that has all the desired properties must be \(\KM\) for some \(\matM\).

\begin{tcolorbox}[colback=red!5!white,colframe=red!75!black]
\begin{definition}
A ring \((\R^n, +, \cdot)\), where \(+\) is the usual vector addition and \(\cdot\) is an arbitrary binary operation for which the ring axioms hold, is called a \emph{tubal ring} if it is commutative, unital, von Neumann regular, and is also an associative algebra over \(\R\) with respect to the usual scalar-vector product.
\end{definition}

\begin{theorem}[Main Result]
\label{thm:main}
Suppose that \((\R^n, +, \cdot)\) is a tubal ring. Then, there exists an invertible matrix \(\matM\in\C^{n\times n}\) such that for all \(\va, \vb \in \R^n\), we have \(\va \cdot \vb = \va \Mprod \vb\).
\end{theorem}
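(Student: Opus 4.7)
}
The plan is to show that a tubal ring is, as an abstract $\R$-algebra, forced to be a direct product of copies of $\R$ and $\C$, and then to realize the isomorphism in matrix form. Concretely, let $A = (\R^n, +, \cdot)$ be a tubal ring. By assumption $A$ is a finite-dimensional, commutative, unital, von Neumann regular $\R$-algebra. Von Neumann regularity together with commutativity implies that $A$ is reduced: if $x^2 = 0$, pick $y$ with $x = xyx$ and observe $x = x^2 y = 0$. For a finite-dimensional commutative $\R$-algebra the Jacobson radical coincides with the nilradical, so $A$ is semisimple. By the Artin--Wedderburn structure theorem (specialized to commutative semisimple algebras), $A$ decomposes as a product of fields, each of which is a finite extension of $\R$. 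Since the only finite extensions of $\R$ are $\R$ and $\C$, we obtain an $\R$-algebra isomorphism
\[
\phi:A \xrightarrow{\ \sim\ } \R^{n_1}\times \C^{n_2},\qquad n_1+2n_2=n,
\]
where multiplication on the right-hand side is componentwise (and the dimension count comes from $\dim_\R A = n$).

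Next I will convert $\phi$ into a complex matrix. Define $\psi:\R^{n_1}\times\C^{n_2}\hookrightarrow \C^n$ by sending each real coordinate to itself and each complex coordinate $z_k$ to a pair of coordinates $(z_k,\overline{z_k})$. Then $\psi$ is an $\R$-linear injection that intertwines componentwise multiplication on the left with the Hadamard product on the right, and its image $V\subset\C^n$ consists precisely of vectors whose entries are either real or come in complex-conjugate pairs. Let $\matM\in\C^{n\times n}$ be the matrix whose columns are $\psi(\phi(\ve_j))$ for $j=1,\dots,n$, where $\ve_1,\dots,\ve_n$ is the standard basis of $\R^n$. By construction, for every $\va\in\R^n$ we have $\matM\va = \psi(\phi(\va))\in V$, so the rows of $\matM$ already have the conjugation structure demanded by Lemma~\ref{lem:winv-realness}, and since $\psi\circ\phi$ is a ring homomorphism into $(\C^n,+,\hadprod)$,
\[
\matM(\va\cdot\vb)=\psi(\phi(\va\cdot\vb)) = \psi(\phi(\va))\hadprod \psi(\phi(\vb)) = \matM\va \hadprod \matM\vb .
\]
Thus $\va\cdot\vb = \matM^{-1}(\matM\va\hadprod \matM\vb)=\va\Mprod\vb$ as soon as $\matM$ is invertible.

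The main obstacle, and the only non-routine step, is therefore to show that $\matM\in\C^{n\times n}$ is invertible as a complex matrix. Its columns $\R$-span $V$, which has real dimension $n$, but one needs to rule out complex linear dependencies. I will prove that $V\cap \si V=\{0\}$: if $v\in V\cap \si V$ write $v=\si u$ with $u\in V$; for any index $j$ with conjugate partner $\sigma(j)$ (possibly $j$ itself for the real coordinates), $v_j=\overline{v_{\sigma(j)}}$ forces $\si u_j=-\si\,\overline{u_{\sigma(j)}}=-\si u_j$, hence $u_j=0$. Consequently $V\oplus \si V$ has real dimension $2n=\dim_\R\C^n$, so the $\C$-span of the columns of $\matM$ is all of $\C^n$ and $\matM$ is invertible. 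Combining with the previous paragraph yields $\va\cdot\vb=\va\Mprod\vb$ for all $\va,\vb\in\R^n$, which proves the theorem.
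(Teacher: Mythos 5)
Your proof is correct, and it takes a genuinely different route from the paper's main argument. The paper proceeds elementarily: it introduces representation matrices $\matR_\va$, peels off idempotents to write the ring as a direct sum of one- or two-dimensional ``tubal sub-fields,'' shows each sub-field is diagonalizable, and invokes joint diagonalizability of a commuting family of diagonalizable matrices (Lemma~\ref{lem:joint-diag-commutative}, Proposition~\ref{prop:diag-is-Mprod}) to produce $\matM$, with invertibility of $\matM$ coming for free from the diagonalizing similarity. You instead jump straight to structure theory: von Neumann regularity gives reducedness, a reduced finite-dimensional commutative $\R$-algebra is semisimple, Artin--Wedderburn plus the classification of finite extensions of $\R$ yields $A \cong \R^{n_1}\times\C^{n_2}$. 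This is essentially the ``shorter but less elementary'' proof the paper sketches in Subsection~\ref{subsec:additional-discussion} --- but that sketch stops at the algebraic isomorphism, whereas you supply the nontrivial remaining step: composing with the doubling embedding $\psi$ to land in $(\C^n,\hadprod)$, reading off $\matM$ from the images of the standard basis, and proving $\matM$ is invertible via the slick observation $V\cap \si V=\{0\}$, so that the $\R$-basis of $V$ is in fact a $\C$-basis of $\C^n$. What each buys: the paper's route is self-contained (no appeal to Artin--Wedderburn), and its diagonalization machinery directly yields Algorithm~\ref{alg:find-M}; yours is shorter, more conceptual, and makes the ``realize the isomorphism as a matrix'' step explicit in a way the paper's discussion sketch does not. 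One small nit: the lemma you cite for the conjugation structure of the rows of $\matM$ is the unnumbered lemma following Eq.~\eqref{eq:Mprod}, not Lemma~\ref{lem:winv-realness}.
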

\end{tcolorbox}
In order to streamline the presentation, we defer the proof to Section~\ref{sec:proof}.  As Example \ref{example:C-as-KM} shows, the last theorem does not hold if we restrict \(\matM\) to be real.

The requirement of being a von Neumann regular ring, i.e., the existence of a weak inverse for every element, might seem technical and possibly redundant. 
In fact, as we shall see in shortly in Example~\ref{example:dual-numbers}, it possible to define a commutative ring over $\R^2$ (for example) that is not von Neumann regular.

First, we note that the requirement of having a weak inverse brings the tubal ring very close, possibly as closest as possible, to being a field. Since we cannot define a field over $\R^n$ (for $n>2$), this is desirable. Secondly, in Appendix~\ref{app:additional-discussion} we show that a commutative ring with elements in $\R^n$ which is not von Neumann regular must have nilpotent elements, i.e., elements $\vn \in\R^n$ such that $\vn^2 = 0$. Obviously, this is a somewhat pathological situation for elements that we want to serve as scalars. We conjecture that the existence of such elements precludes a meaningful definition for unitary tensors, with such definition essential for defining a tensor SVD. We leave this for future research.   

The fact that for a tubal ring there exist such a $\matM$ implies that, in a sense, there is a small set of possible tubal rings. 
\begin{definition}[Realness]
    Consider the tubal ring $\KM$ for an $n$-by-$n$ matrix $\matM$, and recall that each row of $\matM$ is either real, or has a unique conjugate pair. 
    The {\em realness} of $\KM$ or $\matM$ is the number of real rows in $\matM$. 
    The ring $\KM$ is said to be {\em fully real} or {\em real-like} if its realness is equal to $n$.
\end{definition}
\begin{proposition}
    \label{prop:realness-iso}
    Let $\matM$ and $\matM'$ be of equal size. The rings $\KM$ and ${\mathbb{K}}_{\matM'}$ are isomorphic as rings if their realness is the same. 
\end{proposition}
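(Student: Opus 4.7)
The plan is to show that every $\KM$ is ring-isomorphic to a canonical ring inside $(\C^n, +, \hadprod)$ that depends only on the realness pattern and size of $\matM$, so that two matrices of equal realness produce the same canonical ring and hence isomorphic $\KM$'s. The vehicle is the transform-domain embedding $\Phi_\matM : \KM \to (\C^n, +, \hadprod)$ defined by $\Phi_\matM(\va) := \matM\va$.

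First I would verify that $\Phi_\matM$ is a ring monomorphism: additivity is immediate from linearity of $\matM$, injectivity follows from invertibility of $\matM$, and multiplicativity follows from the defining identity
\[
\Phi_\matM(\va \Mprod \vb) = \matM \cdot \matM^{-1}(\matM\va \hadprod \matM\vb) = \Phi_\matM(\va) \hadprod \Phi_\matM(\vb).
\]
Consequently $\KM$ is isomorphic, as a ring, to its image $V_\matM \subseteq \C^n$ equipped with the ambient componentwise operations.

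Next I would identify $V_\matM$ concretely with the subset $W_\matM \subseteq \C^n$ carved out by the realness constraints of $\matM$: $z_i \in \R$ whenever row $i$ of $\matM$ is real, and $z_i = \overline{z_j}$ whenever rows $i$ and $j$ form a conjugate pair. The inclusion $V_\matM \subseteq W_\matM$ is forced by the structure of $\matM$ acting on real vectors. For equality I would use a real-dimension count: if $r$ is the realness, then $W_\matM$ has real dimension $r + 2 \cdot \tfrac{n-r}{2} = n$, while $V_\matM = \matM(\R^n)$ also has real dimension $n$ because $\matM$ is invertible. Hence $V_\matM = W_\matM$, a subspace of $\C^n$ that depends only on the partition of $\{1,\dots,n\}$ into real indices and conjugate pairs.

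Finally I would absorb row permutations: for any permutation matrix $\matP$, the identity $\matP\vx \hadprod \matP\vy = \matP(\vx \hadprod \vy)$ together with $(\matP\matM)^{-1} = \matM^{-1}\matP^{-1}$ gives $\mathbb{K}_{\matP\matM} = \KM$ on the nose (not just up to isomorphism). Given $\matM$ and $\matM'$ of common realness $r$, I would permute the rows of each into a common canonical pattern (real rows first, then consecutive conjugate pairs); after this normalization $W_\matM = W_{\matM'}$ and therefore $V_\matM = V_{\matM'}$, so the composition $\Phi_{\matM'}^{-1} \circ \Phi_\matM$, i.e., $\va \mapsto (\matM')^{-1}\matM\va$, is the desired ring isomorphism $\KM \to \mathbb{K}_{\matM'}$. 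The main obstacle I foresee is the dimension-count step $V_\matM = W_\matM$: one must count each conjugacy constraint as a single complex (i.e., two real) constraint rather than doubling, and invertibility of $\matM$ must be invoked to rule out $V_\matM$ being a proper subspace of $W_\matM$; the remaining steps are routine bookkeeping with the transform map and permutations.
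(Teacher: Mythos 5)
Your proof is correct, and it follows the same overall strategy as the paper — normalize by a row permutation (using the observation that $\mathbb{K}_{\matP\matM}=\KM$ for a permutation matrix $\matP$, which the paper treats as a "without loss of generality" step) and then exhibit the linear map $\va\mapsto(\matM')^{-1}\matM\va$ as the isomorphism. The tactical difference is that the paper introduces explicit canonical matrices $\matM_{n,m}$ and proves directly (via the factorization $\matM=\matZ^{-1}\matB$ with $\matB$ real, from Lemma~\ref{lem:struct-Minv}) that $\matM_{n,m}^{-1}\matM$ is a real invertible matrix, then verifies by computation that the resulting $T$ respects $\Mprod$ and $\sconj$. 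You instead push everything into the transform domain: $\Phi_{\matM}$ is a ring embedding into $(\C^n,+,\hadprod)$, the image $V_\matM$ is a real $n$-dimensional subspace, and the canonical constraint subspace $W_\matM$ also has real dimension $n$, forcing $V_\matM=W_\matM$ to depend only on the realness pattern. This dimension-count replaces the paper's explicit realness verification for the conjugating matrix and is arguably cleaner, since realness of $(\matM')^{-1}\matM$ (and hence that the isomorphism maps $\R^n$ into itself) falls out of $V_\matM=V_{\matM'}$ for free. One small difference in scope: the paper additionally checks that $T$ preserves the $\sconj$-conjugation, giving a $*$-ring isomorphism, whereas you prove only the ring isomorphism the statement asks for — both are adequate, yours is minimal.
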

The proof is delegated to Section~\ref{sec:proof-winv-realness}. 
The name ``real-like'' is justified since for real-like tubal rings we have $\va^\sconj=\va$ for all $\va$. 
Since complex rows in $\matM$ come in conjugate pairs, the realness $r$ must satisfy $r \equiv n \pmod{2}$, i.e., $r \in \{0, 2, 4, \dots, n\}$ if $n$ is even, or $r \in \{1, 3, 5, \dots, n\}$ if $n$ is odd. Thus there are, up to isomorphism, $\lfloor \nicefrac{n}{2} \rfloor + 1$ tubal rings. However, this statement might be a bit misleading, in the sense that the isomorphism might not be a isometry (assuming that we have imposed a norm on tubes). 

\subsubsection{Examples}
\label{subsec:examples}

\begin{example}[Split-complex numbers]
    Up to isomorphism, every commutative ring over $\R^2$ is isomorphic to one of three possible rings: complex numbers, split-complex numbers and dual numbers~\cite{jaglom-1968-complex}. We already seen that the first is a tubal ring. We now consider the second, and discuss the third in the next example.

    Split-complex numbers are are expressions of the form $a + b\sj$, where $a,b\in\R$ and $\sj$ is a symbol taken to satisfy $\sj^2 = 1$. Addition and product are naturally defined using this rule:
   \begin{align*}
      (a + b\sj) + (c + d\sj) &= (a + c) + (b + d)\sj \\
      (a + b\sj) \cdot (c + d\sj) &= (ac + bd) + (ad + bc)\sj
   \end{align*}
   To express split-complex number product as a product over $\R^2$ we can use:
   \begin{equation*}
        \begin{bmatrix} a \\ b \end{bmatrix} \cdot \begin{bmatrix} c \\ d \end{bmatrix} =
        \begin{bmatrix} ac + bd \\ ad + bc \end{bmatrix}
    \end{equation*}
    This clearly defines a commutative ring over $\R^2$. Furthermore, it can be verified that weak inverses for non-zero elements are given by:
    \begin{align*}
        \begin{bmatrix} a \\ b \end{bmatrix}^\winv &= \begin{bmatrix} a/(a^2-b^2) \\ -b/(a^2-b^2)\end{bmatrix}\,\,(\text{for }a^2\neq b^2), \\
        \begin{bmatrix} a \\ a \end{bmatrix}^\winv &=
        \begin{bmatrix} 1/(4a) \\ 1/(4a) \end{bmatrix}, \quad\quad \begin{bmatrix} a \\ -a \end{bmatrix}^\winv =
        \begin{bmatrix} 1/(4a) \\ -1/(4a) \end{bmatrix}
    \end{align*}
    So, this a tubal ring. Indeed, product is a $\Mprod$ with\footnote{This matrix was found using the techniques reported in Appendix~\ref{app:additional-discussion}.}
    $$
        \matM = \begin{bmatrix}  1 & 1 \\ 1 & -1  \end{bmatrix}
    $$
    which is the $2 \times 2$ Hadamard matrix (which also coincides with the unscaled DFT matrix for $n=2$). We see that the split-complex product is the t-product for $n=2$.
    
\end{example}

To better understand what tubal rings are so that we are able to recognize such rings when we see them, it is useful to also observe an example to what tubal rings are not. 
\begin{example}[Dual numbers]
\label{example:dual-numbers}
   Introduced by Clifford in 1873~\cite{clifford-1873-biquaternions}, with applications in linear algebra and physics, dual numbers is a number system defined by two reals. 
   
   Dual numbers are expressions of the form $a + b\varepsilon$, where $a,b\in\R$ and $\varepsilon$ is a symbol taken to satisfy $\varepsilon^2 = 0$ and $\varepsilon\neq 0$. Addition and product are naturally defined using this rule:
   \begin{align*}
      (a + b\varepsilon) + (c + d\varepsilon) &= (a + c) + (b + d)\varepsilon \\
      (a + b\varepsilon) \cdot (c + d\varepsilon) &= ac + (ad + bc)\varepsilon
   \end{align*}
   To express dual number product as a product over $\R^2$ we can use:
   \begin{equation*}
        \begin{bmatrix} a \\ b \end{bmatrix} \cdot \begin{bmatrix} c \\ d \end{bmatrix} =
        \begin{bmatrix} ac\\ ad + bc \end{bmatrix}
    \end{equation*}
    This defines a commutative ring over $\R^2$, however it is {\em not} a tubal ring. The reason is that not every element has a weak inverse. To see this, note that any dual number $x+y\varepsilon$:
    \begin{equation*}
\varepsilon\cdot(x+y\varepsilon)\cdot\varepsilon = 0
    \end{equation*}
    so $\varepsilon$ does not have a weak inverse. Indeed, no invertible $\matM$ exists that makes dual numbers isomorphic to $\R^2$. 
\end{example}

\begin{example}[t-product via circulant matrices~\cite{gleich-2012-power-arnol}]
    Given $\vx=[x_1,\dots,x_n]$ let $\circmatrix(\vx)$ denote the circulant matrix whose first column is $\vx$, i.e., 
    \begin{equation*}
        \circmatrix(\vx)\coloneqq 
        \begin{bmatrix}
            x_1     & x_n     & \cdots & x_3    & x_2 \\
            x_2     & x_1     & x_n    &        & x_3 \\
            \vdots  & x_2     & x_1    & \ddots & \vdots \\
            x_{n-1} &         & \ddots & \ddots & x_n \\
            x_n     & x_{n-1} & \cdots & x_2    & x_1
        \end{bmatrix}
    \end{equation*}
    The sum of circulant matrices is circulant, the product of circulant matrices is also circulant. Any two circulant matrices commute. Finally, the identity matrix is a circulant matrix. Thus, by identifying $\vx$ with $\circmatrix(\vx)$ we define a commutative unital ring. In particular, $\circmatrix(\vx \tprod \vy) = \circmatrix(\vx) \cdot \circmatrix(\vy)$, where the right-hand side is standard matrix multiplication. This ring is tubal, with $\matM$ equal to the DFT matrix, i.e., this ring is exactly the t-product. In fact, the original construction of the t-product in \cite{kilmer-2011-factor-strat, braman-2010-third-order} was based on circulant matrices (see also \cite{gleich-2012-power-arnol}).
\end{example}

\begin{example}[Negacyclic convolution]
The t-product is the result of using polynomial product modulo $X^n - 1$. However, we can choose some other polynomial $Q$, as long as its degree is $n$ (for a ring over $\R^n$) and all its roots are distinct. We have
\begin{equation*}
P_\vc = P_\va \cdot P_\vb \mod Q \quad \iff \quad \matM \vc = \matM \va \hadprod \matM \vb.
\end{equation*}
where $\matM$ is the Vandermonde matrix at the roots of $Q$, so polynomial product modulo $Q$ is essentially $\Mprod$ with this $\matM$.

A particularly interesting example is $Q=X^n + 1$. Here, $\matM$ is the {\em skew-Fourier} matrix:
\begin{equation*}
    \matM = \matF_n 
    \begin{bmatrix}
        1 &               &               &        &     \\
          & e^{\pi \si /n}  &               &        &     \\
          &               & e^{2\pi \si /n} &        &     \\
          &               &               & \ddots &     \\
          &               &               &        & e^{(n-1)\pi \si /n}
    \end{bmatrix}
\end{equation*}
When $n=2$, $\matM$ is the same matrix as in Example~\ref{example:C-as-KM}, so the ring we obtain is isomorphic to $\C$ and is a field. In fact, in textbooks showing that $\C$ is isomorphic as a field to $\R[x]/(x^2+1)$ is sometimes given as an exercise~\citep[Section 9.4, Exercise 7]{dummit-2009-abstractalgebra}. So, in a sense, this is a more natural choice than $X^n -1$ (which leads to the t-product) for constructing a tubal ring.   
\end{example}

\begin{example}[Ring group~\cite{navasca2010tensors}]
    The following is a general technique for constructing a tubal rings. Previous examples are instances of it. Let $G$ be a finite abelian group of size $n$. The {\em group ring} $\R[G]$ are the set of functions $G\to\mathbb{R}$, with point-wise addition, and convolution as product:
    \begin{equation*}
        (f\cdot g)(x)= \sum_{u\cdot v=x} f(u) g(v)
    \end{equation*}
    This defines a commutative von Neumann regular ring. To make it a tubal ring, simply identify each group element $g \in G$ with an index in $\{1,\dots, n\}$.

    One example of this process is the t-product, which is obtained by using the cyclic group $G=\Z_n$. Another example is as follows. Consider the group $\Z^k_2=\{0,1\}^k$ with addition modulo 2 (componentwise XOR). It has $n=2^k$ elements. Index each group element by an integer in $0,\dots,n-1$, corresponding to its binary representation. If we index vectors in $\R^n$ using $0,\dots,n-1$ as well, convolution of functions over  $\Z^k_2$ defines the following product:
    \begin{equation*}
        (\va \cdot \vb)_i = \sum^{n-1}_{j=0} \va_i \vb_{i\oplus j}
    \end{equation*}
    where $\oplus$ denotes the bitwise XOR operation. 
    Since over the group $\Z^k_2$\linebreak XOR-convolution diagonalizes under the Walsh–Hadamard transform (WHT), exactly like ordinary cyclic convolution diagonalizes under the DFT, this is a tubal ring with $\matM=\mat{WHT}$.

\end{example}
\section{Module of Oriented Matrices}
\label{sec:org6426c78}

We define \(\KM^m\) to be the direct sum of \(m\) copies of \(\KM\), that is \(\KM^m \coloneqq \KM \oplus \cdots \oplus \KM\). We view this as a column vector of \(m\) elements of \(\KM\). Addition is element addition, and multiplication by a tube is element-wise multiplication. 

\begin{figure}[htbp]
\centering
{\usetikzlibrary{decorations.pathreplacing} 

\begin{tikzpicture}
    \def\width{0.35}    
    \def\height{\width}   
    \def\depth{2.3*\width}   
    \def\scl{0.8} 
    \def\spacing{.45}     

    \begin{scope}
        \foreach \x in {0} {
        \begin{scope}[shift={(\x*\spacing,0)}]
            \foreach \y in {0,1,2,3} {
                \begin{scope}[shift={(0, \y * \spacing)}]
                    \fill[blue!60] (0, 0) -- ++(\width, 0) -- ++(0, \height) -- ++(-\width, 0) -- cycle;
                    \fill[blue!40] (0, \height) -- ++(\width, 0) -- ++(\depth*\scl, \depth) -- ++(-\width, 0) -- cycle;
                    \fill[blue!20] (\width, 0) -- ++(0, \height) -- ++(\depth*\scl, \depth) -- ++(0,-\height) -- cycle;
                    \draw[thick] (0, 0) -- ++(\width, 0) -- ++(\depth*\scl, \depth) -- ++(0, \height) 
                                -- ++(-\width, 0) -- ++(-\depth*\scl, -\depth) -- cycle;
                    \draw[thick] (0, 0) -- ++(0, \height);
                    \draw[thick] (\width, 0) -- ++(0, \height);
                    \draw[thick] (0, \height) -- ++(\width, 0) -- ++(\depth*\scl, \depth);
                \end{scope}
            }
        \end{scope} 
        }

        \draw [decorate, decoration={brace,  amplitude=5pt}, thick] 
            (-0.2, 0) -- (-0.2, \spacing*4-0.1) node[midway,xshift=-0.4cm] {\footnotesize $m$};
        \draw [decorate, decoration={brace, amplitude=5pt}, thick] 
            (-0.2, \spacing*4) -- ++(\depth*\scl, \depth) node[midway,left,xshift=-0.1cm, yshift=0.3cm] {\footnotesize $\KM$};
    \end{scope}

    \begin{scope}[xshift=6.5cm]
        \pgfmathsetmacro{\len}{sqrt(\depth*\depth * 6)}
        \pgfmathsetmacro{\totheight}{\spacing * 4}
        \def\matrixdepth{0.15}

        \fill[blue!20] (0, 0) -- ++(\len, 0) -- ++(0, \totheight) -- ++(-\len, 0) -- cycle;
        \fill[blue!40] (0, \totheight) -- ++(\len, 0) -- ++(\matrixdepth, \matrixdepth) -- ++(-\len, 0) -- cycle;
        \fill[blue!60] (\len, 0) -- ++(0, \totheight) -- ++(\matrixdepth, \matrixdepth) -- ++(0, -\totheight) -- cycle;

        \draw[thick] (0, 0) -- ++(\len, 0) -- ++(0, \totheight) -- ++(-\len, 0) -- cycle;
        \draw[thick] (0, \totheight) -- ++(\len, 0) -- ++(\matrixdepth, \matrixdepth) -- ++(-\len, 0) -- cycle;
        \draw[thick] (\len, \totheight) -- ++(0, -\totheight);
        \draw[thick] (\len, 0) -- ++(\matrixdepth, \matrixdepth) -- ++(0, \totheight);
        \draw[thick] (\len + \matrixdepth, \totheight + \matrixdepth) -- ++(-\len, 0);

        \draw [decorate, decoration={brace,  amplitude=5pt}, thick](-0.2, 0) -- ++(0, \totheight) node[midway,xshift=-0.4cm] {$m$};
        \draw [decorate, decoration={brace, amplitude=5pt}, thick] (\matrixdepth, \totheight+0.3) -- ++(\len, 0) node[midway,yshift=0.4cm] {$n$};
    \end{scope}

    \draw[->, thick] (2,1.5) -- (5.5,1.5) node[midway,above] {\textit{squeeze}};
    \draw[<-, thick] (2,1.0) -- (5.5,1.0) node[midway,below] {\textit{twist}};
\end{tikzpicture}}
\caption{\label{fig:twist-and-squeeze}Oriented matrices: isomorphism between \(\KM^m\) and \(\R^{m\times n}\). Going from right to left (``twist''), each row of the matrix is lifted into the third dimension to become a tube element; going from left to right (``squeeze''), the tube entries are laid flat back into matrix rows.}
\end{figure}

\(\KM^m\) is isomorphic to \(\R^{m \times n}\): it can be viewed as taking the rows of such matrices and ``twisting'' them into the third dimension. The inverse operation is ``squeezing''. See Figure \ref{fig:twist-and-squeeze} for visual illustration. Thus, we refer to the space \(\KM^m\) as the space of \emph{oriented matrices}, and denote them (typesetting wise) as matrices. 

\begin{theorem}[Theorem 4.1 in~\cite{kernfeld-2015-tensor-tensor}]
\(\KM^m\) is a free module over \(\KM\) of rank \(m\).
\end{theorem}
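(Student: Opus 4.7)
The plan is to verify the $\KM$-module structure on $\KM^m$ and then exhibit an explicit basis of size $m$. The module axioms are essentially free: since $\KM^m$ is defined as a direct sum of $m$ copies of $\KM$, componentwise addition makes it an abelian group, and componentwise scalar multiplication by $\KM$ inherits associativity, distributivity, and the unit law directly from the corresponding ring axioms in $\KM$. So the only substantive work is producing a basis and verifying its size is invariant.

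\smallskip
\noindent For freeness of rank $m$, I would introduce the standard basis $\vec{e}_1, \dots, \vec{e}_m \in \KM^m$, where $\vec{e}_i$ has $\unitM$ in the $i$-th component and the zero tube $\vzero \in \KM$ in all other components. The spanning property is immediate: any $\vec{v} \in \KM^m$ with components $\vec{v}_1, \dots, \vec{v}_m \in \KM$ can be written as
\[
\vec{v} = \sum_{i=1}^{m} \vec{v}_i \Mprod \vec{e}_i,
\]
using that $\vec{v}_i \Mprod \unitM = \vec{v}_i$ by the unit property of $\unitM$ in $\KM$, and that $\vec{v}_i \Mprod \vzero = \vzero$ (which follows from the module axioms noted after the definition of a module). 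For linear independence over $\KM$, suppose $\sum_{i=1}^{m} \vec{c}_i \Mprod \vec{e}_i = \vec{0}$ for some $\vec{c}_1, \dots, \vec{c}_m \in \KM$; reading off the $i$-th component gives $\vec{c}_i \Mprod \unitM = \vec{0}$, whence $\vec{c}_i = \vec{0}$ for each $i$.

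\smallskip
\noindent To conclude that the rank is well-defined and equals $m$, I would invoke the fact recalled in Section~\ref{sec:prelim}: for free modules over a nonzero commutative ring, all bases have the same cardinality. Since $\KM$ is commutative, unital, and nonzero (it contains $\unitM \neq \vzero$), this applies and pins down the rank to $m$.

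\smallskip
\noindent I expect no serious obstacle here. The only thing worth being a little careful about is not accidentally assuming $\KM$ is a field — everything is carried out with only the commutative unital ring structure, which is exactly what the module framework of Section~\ref{sec:prelim} was set up to accommodate.
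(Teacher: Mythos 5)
Your proof takes essentially the same approach as the paper: exhibit the standard basis $\vec{e}_1,\dots,\vec{e}_m$ (with $\unitM$ in one slot and $\vzero$ elsewhere), verify it spans and is linearly independent, and then appeal to the invariant basis number property of commutative rings to conclude the rank is $m$. The only difference is that you spell out the spanning and independence checks explicitly, where the paper just asserts them as easy to verify — both are correct.
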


\begin{proof}
Since we are using a direct sum of \(m\) modules, \(\KM^m\) is immediately a free module over \(\KM\) (every finite cartesian product of rings is a a free module). Nevertheless, it is instructive to give an explicit basis for \(\KM^m\). A natural basis for \(\KM^m\) is the set of \(m\) oriented matrices \(\matE_1, \dots, \matE_m\) where \(\matE_i\) is the oriented matrix with all zeros except for the \(i\)-th tube, which is the unit element of \(\KM\). It is easy to verify that this is a basis. 

We have shown that \(\KM^m\) has a basis, so it is a free module. The statement that it has rank \(m\) is short for saying that every basis has \(m\) elements. Since \(\KM\) is a commutative ring, it has {\em invariant basis number}, which means that all bases of a finitely generated free module over \(\KM\) have the same number of elements. Since we demonstrated a basis with \(m\) elements, all bases must have \(m\) elements, and the rank is \(m\).
\end{proof}

We go further and consider inner products. Given two oriented matrices \(\matU, \matV \in \KM^m\), we define their \emph{\(\Mprod\)-dot product} as
\begin{equation}
\label{eq:starM-dotprod}
\matU \Mdot \matV \coloneqq \sum_{i=1}^m \vu^\sconj_i \Mprod \vv_i.
\end{equation}
Notice that this is a sum of \(m\) \(\Mprod\)-products, and thus it is an element of \(\KM\), not a scalar. In tubal algebra scalars are tubes. The following theorem shows that the \(\Mdot\) behaves like an inner product. 
\begin{theorem}
For all \(\matU, \matV, \matW \in \KM^m\) and \(\vr,\vs \in \KM\), the \(\Mdot\) product satisfies the following properties:
\begin{itemize}
\item {\bf \(\Mprod\)-conjugate symmetry:} \(\matU \Mdot \matV = (\matV \Mdot \matU)^\sconj\).
\item {\bf \(\Mprod\)-linearity in the second argument:} \(\matU \Mdot (\vr \Mprod \matV + \vs \Mprod \matW) = \vr \Mprod (\matU \Mdot \matV) + \vs \Mprod (\matU \Mdot \matW)\).
\item {\bf \(\Mprod\)-positive definiteness:} \(\matU \Mdot \matU \Mgeq 0\) and \(\matU \Mdot \matU = 0\) if and only if \(\matU = 0\).
\end{itemize}
\end{theorem}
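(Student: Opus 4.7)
The plan is to pull every computation into the ``transform domain'' by applying $\matM$ to both sides, where $\Mprod$ becomes entrywise product, $\cdot^\sconj$ becomes entrywise conjugation, and $\Mleq$ becomes the entrywise order on reals. All three properties then reduce to elementary identities about complex numbers. Concretely, the two workhorse identities I would record at the top of the proof are $\matM(\va \Mprod \vb) = \matM\va \hadprod \matM\vb$ and $\matM(\va^\sconj) = \overline{\matM\va}$, both immediate from the definitions. I would also use that $\matM$ is a bijective $\R$-linear map $\R^n \to \matM(\R^n) \subset \C^n$, so two tubes are equal iff their images under $\matM$ agree componentwise.

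For $\Mprod$-conjugate symmetry, I would apply $\matM$ to both sides and, using the identities above, reduce it to $\sum_i \overline{\matM\vu_i} \hadprod \matM\vv_i = \overline{\sum_i \overline{\matM\vv_i} \hadprod \matM\vu_i}$, which holds componentwise in $\C$ because $\overline{w}z = \overline{\overline{z}w}$. For $\Mprod$-linearity in the second argument, the same transform-domain reduction gives $\overline{\matM\vu_i} \hadprod (\matM\vr \hadprod \matM\vv_i + \matM\vs \hadprod \matM\vw_i)$ inside the sum, which rearranges by distributivity and commutativity of $\hadprod$; together with $\R$-linearity of $\matM$ and summation this yields the claim. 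These two parts are essentially bookkeeping.

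For positive definiteness, applying $\matM$ gives
\begin{equation*}
\matM(\matU \Mdot \matU) \;=\; \sum_{i=1}^{m} \overline{\matM\vu_i} \hadprod \matM\vu_i \;=\; \sum_{i=1}^{m} \bigl|\matM\vu_i\bigr|^2,
\end{equation*}
where the absolute value and square are taken componentwise. This vector is real and entrywise nonnegative, which is exactly the statement $\matU \Mdot \matU \Mgeq 0$. For the equality case, $\matU \Mdot \matU = \vzero$ forces $\sum_i |\matM\vu_i|^2 = \vzero$ componentwise, hence $\matM\vu_i = \vzero$ for every $i$, and invertibility of $\matM$ gives $\vu_i = \vzero$ for every $i$, i.e.\ $\matU = 0$; the converse is immediate.

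The only step that needs care is verifying that $\matU \Mdot \matU$ is a legitimate element of $\KM$ (i.e.\ that the sum lives in $\R^n$, not merely in $\matM^{-1}(\C^n)$): this is where Lemma~\ref{lem:winv-realness} and the row structure of $\matM$ matter, but it follows from the fact that the conjugate of each summand $\vu_i^\sconj \Mprod \vu_i$ equals itself (its image under $\matM$ is the real vector $|\matM\vu_i|^2$), so each summand, and therefore the sum, lies in $\R^n$. I expect no other obstacle: once the transform-domain identities are in hand, the proof is essentially three applications of the same reduction.
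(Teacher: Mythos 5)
Your proof is correct and takes essentially the same approach as the paper's: both reduce positive definiteness to the transform-domain computation $\matM(\vu^\sconj\Mprod\vu)=\overline{\matM\vu}\hadprod\matM\vu=|\matM\vu|^2$ entrywise, and treat conjugate symmetry and linearity as routine consequences of the ring axioms (the paper simply declares these two parts trivial and proves only the third). Your explicit note that each summand $\vu_i^\sconj\Mprod\vu_i$ lands in $\R^n$ because its $\matM$-image is real is a nice clarification, though it is also already guaranteed by the earlier lemmas establishing that $\va^\sconj$ and $\va\Mprod\vb$ are real for real inputs.
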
 
\begin{proof}
The only non-trivial property is the last one. We will show that for all \(\vu \in \KM\) we have \(\vu^\sconj \Mprod \vu \Mgeq 0\), and \(\vu^\sconj \Mprod \vu = 0\) if and only if \(\vu = 0\). The claim then follows immediately from the definition of the \(\Mprod\)-dot product. To show that \(\vu^\sconj \Mprod \vu \Mgeq 0\) we need to show \(\R \ni \matM(\vu^\sconj \Mprod \vu) \geq 0\). Let \(\vy=\matM\vu\). By the definition of \(\Mprod\) we have
\begin{align*}
\matM(\vu^\sconj \Mprod \vu) &= \matM(\matM^{-1}(\matM \vu^\sconj \hadprod \matM \vu)) \\
&= \matM \vu^\sconj \hadprod \matM \vu \\
&= \matM (\matM^{-1} \overline{\matM \vu}) \hadprod \matM \vu \\
&= \overline{\matM \vu} \hadprod \matM \vu \\
&= \overline{\vy} \hadprod \vy 
\end{align*}
Element \(j\) of \(\overline{\vy} \hadprod \vy\) is equal to \(|y_j|^2\) which is real and non-negative, and zero only if \(y_j=0\). Since \(\matM\) is invertible, \(\vy = \matM\vu = 0\) if and only if \(\vu = 0\), so \(\vu^\sconj \Mprod \vu = 0\) if and only if \(\vu = 0\). It follows that \(\matU \Mdot \matU = \sum_{i=1}^m \vu_i^\sconj \Mprod \vu_i \Mgeq 0\), and  \(\matU \Mdot \matU = 0\) if and only if each summand is zero, which happens if and only if \(\vu_i = 0\) for all \(i\), i.e., \(\matU = 0\).
\end{proof}
\section{Tensors as Matrices over \texorpdfstring{\(\KM\)}{KM}}
\label{sec:org1bd601a}

We can identify a tensor \(\tenA \in \R^{m \times p \times n}\) with a matrix of elements of \(\KM\), that is \(\tenA \in \KM^{m \times p}\). We define the \(\Mprod\)-product of two tensors \(\tenA\in\KM^{m\times p}\) and \(\tenB\in\KM^{p\times l}\) as the tensor obtained using usual matrix multiplication, but with the \(\Mprod\)-product instead of the usual product. That is, 
\begin{equation*}
(\tenA \Mprod \tenB)_{ij} = \sum_{k=1}^p \tenA_{ik} \Mprod \tenB_{kj}
\end{equation*}
We then, again, identify the result with a tensor in \(\R^{m \times l \times n}\).

In linear algebra, matrices over fields are closely connected to linear maps between vector spaces. In tubal algebra, a similar property holds for third-order tensors, on the module of oriented matrices. 
\begin{definition}
A function \(T:\KM^p \to \KM^m\) is \(\Mprod\)-linear if for all \(\matU, \matV \in \KM^p\) and \(\vr,\vs \in \KM\), we have 
\begin{equation*}
T(\vr \Mprod \matU + \vs \Mprod \matV) = \vr \Mprod T(\matU) + \vs \Mprod T(\matV).
\end{equation*}
A function \(S:\R^{p\times n} \to \R^{m\times n}\) is \(\Mprod\)-linear if \((\twist \circ S\circ \squeeze)\) is \(\Mprod\)-linear.
\end{definition}
\begin{remark}
  Since for any vector \(\vu\in\R^n\) and scalar \(\alpha \in \R\) we have \(\alpha \vu = (\alpha\unitM)\Mprod \vu\), every \(\Mprod\)-linear function is also linear. However, the converse is not true.  
\end{remark}

Let \(\tenA\in\KM^{m\times p}\) be a tensor, and consider the map \(T:\R^{p\times n} \to \R^{m\times n}\) defined by \(T(\matX) = \squeeze(\tenA \Mprod \twist(\matX))\), where we wrote the twisting and squeezing operation explicitly (instead of relying on the isomorphism between matrices and oriented matrices). We easily see that \(T\) is \(\Mprod\)-linear. Furthermore, every \(\Mprod\)-linear function can be represented in this way. In addition, composition of \(\Mprod\)-linear functions is \(\Mprod\)-linear, and the representing tensor is the \(\Mprod\)-product of the representing tensors of the functions. Thus, in the tubal algebra we have tensors as linear operators on matrices, albeit a restricted type of linear operators (\(\Mprod\)-linear operators)~\cite{braman-2010-third-order}. 

\subsection{*-Algebra Structure}
 We define two additional operations to make tensors of the form $\KM^{m\times m}$ a \(*\)-algebra. First, scalar multiplication (where the scalar is a tube in \(\KM\)) is defined as element-wise multiplication. Second, we define the \(\Mprod\)-Hermitian transpose of a tensor \(\tenA\) via
\begin{equation*}
(\tenA^\ha)_{ij} \coloneqq \tenA_{ji}^\sconj
\end{equation*}
We also extend the definition to non-square tensors, i.e., $\KM^{m\times p}$ with $m\neq p$, using the same formula.

Algebraically, the $\Mprod$-Hermitian transpose behaves like the conjugate transpose on matrices: for every compatibly sized $\tenA$ and $\tenB$ we have $(\tenA \Mprod \tenB)^\ha = \tenB^\ha \Mprod \tenA^\ha$~\cite{kernfeld-2015-tensor-tensor}. More importantly, the  $\Mprod$-Hermitian transpose behaves like we expect in relation to adjoints of $\Mprod$-linear maps. 

Recall that in an inner product space, the adjoint of a linear map $T$ is the unique linear map $T^\adj$ such that for all $\vx$ and $\vy$ we have $\inner{T(\vx)}{\vy} = \inner{\vx}{T^\adj(\vy)}$. Furthermore, if the inner product is the usual dot product, and $T(\vx)=\matA\vx$ for some matrix $\matA$, then $T^\adj(\vy)=\matA^\ha \vy$. A similar property hold the $\Mprod$-Hermitian transpose with the \(\Mprod\)-dot product replacing the usual dot product.
\begin{definition}
    The $\Mprod$-adjoint of a $\Mprod$-linear operator \(T:\KM^p \to \KM^m\) with respect to \(\Mprod\)-dot product is the unique operator  \(T^\adj:\KM^m \to \KM^p\) such that for all $\matX\in\KM^m$ and $\matY\in\KM^p$ we have $T(\matX)\Mdot\matY=\matX\Mdot T^\adj(\matY)$.
\end{definition}
\begin{proposition}
    Suppose that $T(\matX)=\tenA \Mprod \matX$. Then, $T^\adj(\matY)=\tenA^{\ha} \Mprod\matY$.
\end{proposition}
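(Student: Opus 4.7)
The plan is to verify the defining property of the adjoint by direct computation, using the definition of $\Mdot$, the $*$-ring axioms for $\sconj$ on $\KM$, and the commutativity of $\Mprod$. The main point is that everything reduces to manipulations on a double sum, and the result essentially falls out of the fact that $(\va \Mprod \vb)^\sconj = \va^\sconj \Mprod \vb^\sconj$ in our commutative $*$-ring $\KM$.

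First I would expand $T(\matX) \Mdot \matY = (\tenA \Mprod \matX) \Mdot \matY$ using the definition of $\Mdot$ from Eq.~\eqref{eq:starM-dotprod} and the definition of $\Mprod$ between tensors. This produces
\[
(\tenA \Mprod \matX) \Mdot \matY
= \sum_{i=1}^m \Bigl(\sum_{k=1}^p \tenA_{ik} \Mprod \vx_k\Bigr)^{\sconj} \Mprod \vy_i.
\]
Next I would push $\sconj$ inside the sum using additivity of $\sconj$, and inside each product using $(\va \Mprod \vb)^\sconj = \vb^\sconj \Mprod \va^\sconj = \va^\sconj \Mprod \vb^\sconj$ (the second equality uses commutativity of $\Mprod$, established for $\KM$ in the earlier section). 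This yields
\[
\sum_{i=1}^m \sum_{k=1}^p \tenA_{ik}^{\sconj} \Mprod \vx_k^{\sconj} \Mprod \vy_i.
\]

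Then I would expand $\matX \Mdot (\tenA^\ha \Mprod \matY)$ similarly. By definition of the $\Mprod$-Hermitian transpose, $(\tenA^\ha)_{ki} = \tenA_{ik}^{\sconj}$, so
\[
\matX \Mdot (\tenA^\ha \Mprod \matY)
= \sum_{k=1}^p \vx_k^{\sconj} \Mprod \sum_{i=1}^m \tenA_{ik}^{\sconj} \Mprod \vy_i
= \sum_{k=1}^p \sum_{i=1}^m \vx_k^{\sconj} \Mprod \tenA_{ik}^{\sconj} \Mprod \vy_i.
\]
Comparing the two double sums, they agree termwise by commutativity of $\Mprod$ on $\KM$, so $T(\matX) \Mdot \matY = \matX \Mdot (\tenA^\ha \Mprod \matY)$, which identifies $\tenA^\ha \Mprod (\cdot)$ as the $\Mprod$-adjoint of $T$.

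Finally, I would address uniqueness (implicit in the definition of $T^\adj$) by a short standard argument: if $S_1, S_2 : \KM^m \to \KM^p$ both satisfy the adjoint identity, then for all $\matX \in \KM^p, \matY \in \KM^m$, $\matX \Mdot (S_1(\matY) - S_2(\matY)) = 0$, and plugging $\matX = S_1(\matY) - S_2(\matY)$ together with the $\Mprod$-positive-definiteness of $\Mdot$ (already proven) forces $S_1(\matY) = S_2(\matY)$. I do not anticipate any real obstacle here; the only place one must be slightly careful is in handling $\sconj$ across both a sum and a product, and in matching indices $(i,k)$ between $\tenA$ and $\tenA^\ha$.
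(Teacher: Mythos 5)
Your proof is correct, and it establishes the identity in essentially the same way as the paper — by computing $T(\matX)\Mdot\matY$ and $\matX\Mdot(\tenA^\ha\Mprod\matY)$ and showing they coincide — but at a more elementary level. The paper's proof is a three-step chain at the tensor level: it rewrites $\matU\Mdot\matV = \matU^\ha\Mprod\matV$ and then invokes the anti-multiplicativity of the tensor Hermitian transpose, $(\tenA\Mprod\matX)^\ha = \matX^\ha\Mprod\tenA^\ha$, which is cited from Kernfeld et al.\ without re-proof. You instead unpack $\Mdot$ into its defining double sum over tubes and work entirely with the $*$-ring axioms of $\KM$ (additivity and anti-multiplicativity of $\sconj$) plus commutativity of $\Mprod$; in effect you re-derive the needed instance of that tensor-level identity rather than taking it as given, making the argument fully self-contained within the machinery developed in the paper. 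Your added uniqueness argument via $\Mprod$-positive-definiteness is sound and fills in a point the paper leaves implicit in the phrase ``the unique operator $T^\adj$'' in its definition of the $\Mprod$-adjoint. One small stylistic remark: the step $(\va\Mprod\vb)^\sconj=\va^\sconj\Mprod\vb^\sconj$ could equally be justified by noting $\matM(\va\Mprod\vb)^\sconj = \overline{\matM\va}\hadprod\overline{\matM\vb}$, but appealing to the $*$-ring axiom plus commutativity as you do is exactly what the abstract setup is for.
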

\begin{proof}
    Eq.~\eqref{eq:starM-dotprod} reads as $\matU\Mdot \matV = \matU^\ha \Mprod \matV$, where $\matU^\ha$ denotes the $1 \times m$ row matrix (over $\KM$) $\left[\vu_1^\sconj, \ldots, \vu_m^\sconj\right]$ and $\matU^\ha \Mprod \matV$ is the $1\times 1$ matrix product over $\KM$, naturally identified with an element of $\KM$. So, for every $\matX$ and $\matY$,
    \[
    T(\matX)\Mdot \matY = (\tenA \Mprod \matX)^\ha \Mprod \matY = \matX^\ha \Mprod \tenA^\ha \Mprod \matY = \matX \Mdot (\tenA^\ha \Mprod \matY)
    \]
    so $T^\adj(\matY)=\tenA^\ha \Mprod\matY$ from the definition of $T^\adj$.
\end{proof}
Note that the claim that $\Mprod$-Hermitian transpose represents the adjoint operation holds only for the \(\Mprod\)-dot product. A $\Mprod$-linear operator is also linear, and as such has a adjoint with respect to the usual dot product. That adjoint is not necessarily represented by the $\Mprod$-Hermitian transpose. We will revisit this point later.

\section{Practical Tubal Algebra}
\label{sec:org5c7caf2}

We now introduce a tool that not only makes computations in tubal algebra easier, but also is a valuable tool for theoretical work. The idea is to introduce a \emph{transformed domain} of tensors, in which the \(\Mprod\)-product separates.

First, let's define the (frontal) facewise-product between two tensors. 
Given a tensor \(\tenA \in \R^{m \times p \times n}\) and a tensor \(\tenB \in \R^{p \times l \times n}\), the facewise-product of \(\tenA\) and \(\tenB\), denoted by \(\tenA \faceprod \tenB\), is the tensor \(\tenC \in \R^{m \times l \times n}\) whose \(j\)th frontal slices is equal to the product of \(j\)th frontal slice of \(\tenA\) and \(j\)th frontal slice of \(\tenB\). 
That is, \(\matC_j = \matA_j \matB_j\), where $\matA_j,\matB_j$ and $\matC_j$ denote the $j$th frontal slices in $\tenA,\tenB$ and $\tenC$ respectively. 
Next, given a tensor \(\tenA \in \R^{m \times p \times n}\), we define the transformed tensor \(\hat{\tenA}\in\C^{m\times p \times n}\) as \(\hat{\tenA} \coloneqq \tenA \nmodeprod{3} \matM\). 

Suppose that \(\tenA\) and \(\tenB\) are tubes, i.e., in \(\R^{1\times 1 \times n}\).
Both identified with a \(\KM\) element, and we denote by \(\va\in\R^n\) and \(\vb\in\R^n\) the corresponding vectors. Let \(\tenC = \tenA \Mprod \tenB\), which is also a tube \(\R^n \ni \vc = \va \Mprod \vb\). The transformed tensors \(\hat{\tenA}, \hat{\tenB}, \hat{\tenC}\) are also tubes and correspond to complex vectors \(\hat{\va}=\matM\va, \hat{\vb}=\matM\vb, \hat{\vc}=\matM\vc\). Since \(\vc = \va \Mprod \vb = \matM^{-1}(\matM \va \hadprod \matM \vb)\), we have \(\hat{\vc} = \hat{\va} \hadprod \hat{\vb}\). Twisting into tubes, the Hadamard product becomes facewise product, and we have \(\hat{\tenC} = \hat{\tenA} \faceprod \hat{\tenB}\). We have shown the last equality only for tensors that have \(1\times 1\) frontal slices, but due to the way the \(\Mprod\)-product is defined for general tensors, this result immediately extends to general tensors. 

\begin{fact}
For tensors \(\tenA, \tenB, \tenC\) of compatible sizes:
\begin{equation*}
\tenC = \tenA \Mprod \ten B \quad \iff \quad \hat{\tenC} = \hat{\tenA} \faceprod \hat{\tenB}
\end{equation*}
\end{fact}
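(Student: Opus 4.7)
The plan is to reduce the statement to the tube case, which the paragraph preceding the Fact has essentially already established. The key observation is twofold: (i) the $\Mprod$-product on general tensors is defined entrywise as a sum of $\Mprod$-products of tubes, and (ii) the transform $\tenX \mapsto \hat{\tenX} = \tenX \nmodeprod{3} \matM$ acts independently on each tube as the map $\tenX_{ij:} \mapsto \matM \tenX_{ij:}$. So the whole job is to push $\matM$ through the defining summation.

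Concretely, I would begin from the entrywise definition $\tenC_{ij:} = \sum_{k=1}^p \tenA_{ik:} \Mprod \tenB_{kj:}$ and apply $\matM$ to both sides. Linearity lets $\matM$ pass through the sum, and on each individual term the tube-level identity gives $\matM(\tenA_{ik:} \Mprod \tenB_{kj:}) = \hat{\tenA}_{ik:} \hadprod \hat{\tenB}_{kj:}$. Reading off the $s$-th entry of the resulting tube turns the Hadamard product into ordinary scalar multiplication, so that
\[
\hat{\tenC}_{ijs} \;=\; \sum_{k=1}^p \hat{\tenA}_{iks}\, \hat{\tenB}_{kjs},
\]
which is exactly the $(i,j)$ entry of the standard matrix product $\hat{\matA}_s \hat{\matB}_s$ of the $s$-th frontal slices. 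Collecting over all $s$ yields $\hat{\tenC} = \hat{\tenA} \faceprod \hat{\tenB}$.

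For the converse, I would invoke the invertibility of $\matM$: the map $\tenX \mapsto \tenX \nmodeprod{3} \matM$ is a bijection on tensors of the appropriate size, so $\hat{\tenC}$ uniquely determines $\tenC$, and the forward direction then gives the equivalence. I do not expect any real obstacle; the argument is essentially bookkeeping. The only mildly delicate point is justifying that tube-wise multiplication by $\matM$ coincides with the mode-3 product $\nmodeprod{3}\matM$, which is immediate from the definition of the mode-$n$ product but worth stating explicitly to make the reduction watertight.
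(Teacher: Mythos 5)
Your proof is correct and follows essentially the same route the paper takes: establish the identity for tubes and then propagate it through the entrywise sum defining the tensor $\Mprod$-product. The paper compresses the extension into a single sentence (``immediately extends to general tensors''), whereas you spell out the bookkeeping — applying $\matM$ through the sum, reading off the $s$-th mode-3 entry, and recognizing a facewise matrix product — which is a reasonable and faithful unpacking of the same argument.
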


This result immediately leads to a simple and efficient way to implement the \(\Mprod\)-product between two tensors: move to the transform domain, do facewise products, and then move back. Thus reducing the computation of the $\Mprod$-product to a mostly parallel set of matrix-matrix multiplications, which are extremely efficient.  But the utility of this result goes much further. It allows us to work separately on each frontal slices, after we converted everything to the transformed domain. Only at the end, we move back to the primal domain. Deriving the following two examples is much easier via this method. 

\begin{example}[Identity tensor]
The tensor
\begin{equation*}
\tenI_m = \left[\begin{array}{cccc}
		\unitM &        &        &        \\
		       & \unitM &        &        \\
		       &        & \ddots &        \\
		       &        &        & \unitM 
		\end{array}
	\right]
\end{equation*}
is the identity tensor of size \(m\). That is \(\tenI_m \Mprod \tenA = \tenA\), and \(\tenB \Mprod \tenI_m = \tenB\) for any tensors \(\tenA\) and \(\tenB\) of compatible sizes. The frontal slices of the transformed tensor \(\hat{\tenI}_m\) are the identity matrices of size \(m\). That is, \(\hat{\matI}_m = \matI_m\). Indeed, \(\hat{\ve}_{\matM} = \matM \ve_{\matM} = \matM \matM^{-1} \mathbf{1} = \mathbf{1}\), so each diagonal entry of \(\hat{\tenI}_m\) is \(1\).
\end{example}

\begin{example}[Hermitian transpose in the transform domain]
For a tensor \(\tenX\) denote by \(\tenX^{\ha\Delta}\) the tensor obtained by the Hermitian conjugate of the frontal slices of \(\tenX\). Then, \(\widehat{\tenX^\ha} = \hat{\tenX}^{\ha \Delta}\).
\end{example}
\section{Tubal SVD and Eckart-Young-like Low-rank Tensor Approximations}
\label{sec:org134fd1b}

One of the main advantages of the tubal algebra is that it is ``matrix mimetic''. That is, many of the results for matrices have a direct counterpart in the tubal algebra where you replace matrix multiplication with \(\Mprod\)-product (sometimes small additional modifications are needed). One of the most glaring example is the existence of \emph{tubal SVD} (t-SVD for short), and its use in low-rank tensor approximations. First, we need two definitions. 

\begin{definition}
A tensor \(\tenU \in \KM^{m\times m}\) is a \(\Mprod\)-unitary tensor if \(\tenU^\ha \Mprod \tenU = \tenU \Mprod \tenU^\ha = \tenI_m\). 
\end{definition}
Note that a tensor \(\tenU\) is \(\Mprod\)-unitary if and only if its transformed tensor \(\hat{\tenU}\) is facewise unitary (in the usual sense). That is, \(\hat{\tenU}^\ha \faceprod \hat{\tenU} = \hat{\tenU} \faceprod \hat{\tenU}^\ha = \hat{\tenI}_m\).

\begin{definition}
A tensor is \emph{f-diagonal} if all its frontal slices are diagonal.
\end{definition}

\begin{theorem}[Theorem 5.1 in~\cite{kernfeld-2015-tensor-tensor}]
Let \(\tenA \in \KM^{m\times p}\) be a tensor. Then there exist \(\Mprod\)-unitary tensors \(\tenU \in \KM^{m\times m}\) and \(\tenV \in \KM^{p\times p}\) and an f-diagonal tensor \(\tenS \in \KM^{m\times p}\) such that \(\tenA = \tenU \Mprod \tenS \Mprod \tenV^\ha\). Furthermore, if we denote by \(\vsigma_1, \dots, \vsigma_{\min(m,p)}\) the diagonal elements of \(\tenS\), then \(\vsigma_1 \Mgeq \vsigma_2 \Mgeq \cdots \Mgeq \vsigma_{\min(m,p)}\Mgeq 0\).
We refer to such a decomposition as the \emph{\(\Mprod\)-SVD} (called ``t-SVDM'' in \cite{kilmer-2021-tensor-tensor}) of \(\tenA\), and \(\vsigma_1, \dots, \vsigma_{\min(m,p)}\) as the \emph{\(\Mprod\)-singular tubes} of \(\tenA\).
\end{theorem}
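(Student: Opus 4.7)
The plan is to reduce the problem to the ordinary complex SVD via the transform domain. By the Fact above, the $\Mprod$-product corresponds to the facewise product after applying $\nmodeprod{3}\matM$, and by the example following it, the $\Mprod$-Hermitian transpose corresponds to the facewise Hermitian transpose. Thus, producing a decomposition $\tenA = \tenU \Mprod \tenS \Mprod \tenV^\ha$ with $\tenU$ and $\tenV$ being $\Mprod$-unitary and $\tenS$ being f-diagonal is equivalent, via the invertible transformation $\tenX \mapsto \hat\tenX$, to producing tensors $\hat\tenU, \hat\tenS, \hat\tenV$ such that every frontal slice satisfies $\hat\matA_j = \hat\matU_j \hat\matSigma_j \hat\matV_j^\ha$ with $\hat\matU_j$ and $\hat\matV_j$ (ordinary) unitary and $\hat\matSigma_j$ diagonal with non-negative real entries in non-increasing order.

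First I would construct such transform-domain slices face-by-face. For each $j$ from $1$ to $n$, I invoke the ordinary complex SVD on the matrix $\hat\matA_j\in\C^{m\times p}$ to obtain $\hat\matU_j, \hat\matSigma_j, \hat\matV_j$ with singular values sorted non-increasingly. Defining $\hat\tenU, \hat\tenS, \hat\tenV$ by stacking the slices and pulling back via $\matM^{-1}$ gives candidates $\tenU, \tenS, \tenV$. The facewise unitary and facewise diagonal properties then translate immediately, using the Fact and the example, into $\Mprod$-unitarity of $\tenU$ and $\tenV$ and f-diagonality of $\tenS$. The ordering $\vsigma_1 \Mgeq \cdots \Mgeq \vsigma_{\min(m,p)} \Mgeq 0$ unfolds by definition of $\Mleq$ to the requirement that $\matM(\vsigma_i - \vsigma_{i+1})$ is real and non-negative entrywise; but the $j$-th entry of $\matM\vsigma_i$ equals the $i$-th singular value of $\hat\matA_j$, so this reduces to the real non-negativity and monotonicity of singular values of each $\hat\matA_j$, which holds by construction.

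The main obstacle is ensuring that the pulled-back tensors $\tenU, \tenS, \tenV$ are actually real, so that they live in $\KM^{m\times m}, \KM^{m\times p}, \KM^{p\times p}$ and not merely in their complex analogues. Since $\tenA$ is real, the structure of $\matM$ established earlier forces $\hat\matA_j$ to be real whenever row $j$ of $\matM$ is real, and $\hat\matA_j = \overline{\hat\matA_{j'}}$ whenever row $j$ of $\matM$ is the entrywise conjugate of row $j'$. The trick is to choose the slicewise SVDs consistently with this pairing: for a real slice take a real SVD, and for a conjugate pair $(j,j')$ compute the SVD of $\hat\matA_j$ and \emph{define} $\hat\matU_{j'} := \overline{\hat\matU_j}$, $\hat\matSigma_{j'} := \hat\matSigma_j$, $\hat\matV_{j'} := \overline{\hat\matV_j}$ (noting singular values are real). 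This yields transformed tensors with exactly the same row-conjugacy pattern as $\hat\tenA$, and the same lemma used in Section~\ref{sec:proof-winv-realness} to establish realness of $\Mprod$ and of $\unitM$ applies here slicewise to guarantee that $\tenU = \hat\tenU \nmodeprod{3} \matM^{-1}$, $\tenS$, and $\tenV$ are real. The remaining verifications (that $\tenU^\ha \Mprod \tenU = \tenI_m$, etc.) are routine once realness is in hand, since they reduce in the transform domain to the facewise unitarity already established.
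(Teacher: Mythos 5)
Your proof is correct and follows the same route as the paper: pass to the transform domain, take an ordinary SVD of each frontal slice, stack the factors, and pull back with $\matM^{-1}$. What you add, and the paper's proof silently elides, is the observation that an \emph{arbitrary} choice of facewise SVDs need not pull back to real tensors. You fix this by coupling the slicewise SVDs to the conjugacy pattern of $\matM$: take a real SVD on frontal slices indexed by real rows of $\matM$, and for a conjugate pair $(j,j')$ set $\hat\matU_{j'}=\overline{\hat\matU_j}$, $\hat\matSigma_{j'}=\hat\matSigma_j$, $\hat\matV_{j'}=\overline{\hat\matV_j}$. Combined with Lemma~\ref{lem:struct-Minv} on the column structure of $\matM^{-1}$, this makes each entry of $\tenU$, $\tenS$, $\tenV$ a sum of real terms and conjugate pairs, hence real. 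The rest — $\Mprod$-unitarity from facewise unitarity, f-diagonality, and the ordering of the singular tubes from the realness and monotonicity of the singular values in each slice — matches the paper's argument. In short, your proof is the paper's proof plus the missing realness verification, which is a genuine improvement in rigor.
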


\begin{proof}
Consider \(\hat{\tenA}\). Construct a SVD decomposition for each frontal slice on its own, collecting the left and right unitary matrices to form \(\hat{\tenU}\) and \(\hat{\tenV}\) respectively, and the singular values in \(\hat{\tenS}\). Note that each frontal slice \(\hat{\matU}_j\) and \(\hat{\matV}_j\) is computed independently, and they are not required to be the same unitary matrix across slices.
With this construction, we have \(\hat{\tenA} = \hat{\tenU} \faceprod \hat{\tenS} \faceprod \hat{\tenV}^{\ha \Delta}\). We can then move back to the primal domain to obtain \(\tenA = \tenU \Mprod \tenS \Mprod \tenV^\ha\). Since, by construction, each frontal slice of \(\hat{\tenU}\) and \(\hat{\tenV}\) is unitary, \(\tenU\) and \(\tenV\) are \(\Mprod\)-unitary. The f-diagonal property of \(\tenS\) follows from the f-diagonal property of \(\hat{\tenS}\).

As for the ordering of the singular values, first note that \(\hat{\tenS}\) is a real tensor (since it contains only singular values). Also, in each frontal slice, the diagonal elements are non-increasing. This immediately implies the ordering in the theorem statement. 
\end{proof}

A worked example illustrating the t-SVD computation on a small \(2\times 2\times 2\) tensor is given in Appendix~\ref{app:worked-examples} (Example~\ref{ex:tsvd-worked}).

\subsection{Low-rank Tensor Approximations via Truncated Tubal SVD}
\label{sec:org1e2e623}

One of the most powerful results on matrix SVD is the Eckart-Young theorem, which shows that we can build an optimal low-rank approximation of a matrix by truncating its SVD decomposition. A similar result holds for tubal SVD, if we restrict \(\matM\) to be unitary up to scaling. But first, we need an appropriate notion of rank. 

\begin{definition}
The \emph{\(\Mprod\)-rank} of a tensor \(\tenA\), denoted by \(\Mrank(\tenA)\), is the maximal \(r\) such that \(\vsigma_r\neq 0\). 
\end{definition}

\begin{lemma}
The \(\Mprod\)-rank of a tensor is the maximal rank of the frontal slices of its transformed tensor: \(\Mrank(\tenA) = \max_j \rank(\hat{\matA}_j)\).
\end{lemma}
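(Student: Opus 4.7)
The plan is to transfer the rank question to the transformed domain, where the $\Mprod$-product becomes a facewise product and the $\Mprod$-SVD reduces to a collection of independent matrix SVDs, as established in the construction of the $\Mprod$-SVD. Concretely, for the $\Mprod$-SVD $\tenA = \tenU\Mprod\tenS\Mprod\tenV^\ha$ produced in the previous theorem, the transformed tensor satisfies $\hat{\tenA}=\hat{\tenU}\faceprod\hat{\tenS}\faceprod\hat{\tenV}^{\ha\Delta}$, and each frontal slice $\hat{\matA}_j = \hat{\matU}_j\hat{\matS}_j\hat{\matV}_j^\ha$ is precisely an ordinary SVD of $\hat{\matA}_j$. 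In particular, $\rank(\hat{\matA}_j)$ equals the number of nonzero diagonal entries of $\hat{\matS}_j$.

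Next I would link the $\Mprod$-singular tubes $\vsigma_1,\dots,\vsigma_{\min(m,p)}$ to these diagonal entries. By the construction, the $k$th diagonal tube of $\tenS$ satisfies $(\matM\vsigma_k)_j = (\hat{\matS}_j)_{kk}$. Since $\matM$ is invertible, $\vsigma_k = 0$ if and only if $\matM\vsigma_k = 0$, i.e., if and only if $(\hat{\matS}_j)_{kk}=0$ for every face index $j$. Equivalently, $\vsigma_k \neq 0$ if and only if there exists some $j$ such that $(\hat{\matS}_j)_{kk}\neq 0$.

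Finally, I would exploit the monotonicity of singular values within each face. Because the diagonal of $\hat{\matS}_j$ is non-increasing and non-negative, the condition $(\hat{\matS}_j)_{kk}\neq 0$ is equivalent to $\rank(\hat{\matA}_j) \geq k$. Therefore, the largest $k$ for which $\vsigma_k \neq 0$ is exactly
\[
\max\{k : \exists j,\ (\hat{\matS}_j)_{kk}\neq 0\} \;=\; \max_j\rank(\hat{\matA}_j),
\]
which is the claimed identity.

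I do not foresee any genuine obstacle: the only subtlety is remembering that $\vsigma_k$ being the zero tube must be checked via the invertibility of $\matM$ (so that the vanishing transfers between the primal and transformed domains), together with the ordering of the face singular values that was established in the $\Mprod$-SVD proof.
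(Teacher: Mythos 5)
Your proposal is correct and takes essentially the same route as the paper's proof: you transfer the question of whether $\vsigma_k$ vanishes to the transform domain via the invertibility of $\matM$, observe that $\vsigma_k\neq 0$ precisely when some face satisfies $(\hat{\matS}_j)_{kk}\neq 0$, and then use the non-increasing ordering of each face's singular values to identify the largest such $k$ with $\max_j\rank(\hat{\matA}_j)$. The paper's argument is just a terser version of the same chain of equivalences.
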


\begin{proof}
The tube \(\vsigma_i\) is non-zero if and only if \(\hat{\vsigma}_i\) is non-zero. \(\hat{\vsigma}_i\) is non-zero if and only if there exists an index \(j\) in which it is non-zero, which means that the frontal slice \(\hat{\matA}_j\) has rank at least \(i\). The result follows. 
\end{proof}

We also need the following result.
\begin{lemma}[Theorem 3.1 in~\cite{kilmer-2021-tensor-tensor}]
Suppose \(\matM=c\matW\) for some unitary matrix \(\matW\) and non-zero scalar \(c\). Let \(\tenU\in\KM^{m\times m}\) be a \(\Mprod\)-unitary tensor. Then for any tensor \(\tenB\) and \(\tenC\) of compatible sizes, we have \(\Fnorm{\tenU \Mprod \tenB} = \Fnorm{\tenB}\) and \(\Fnorm{\tenC \Mprod \tenU} = \Fnorm{\tenC}\).
\end{lemma}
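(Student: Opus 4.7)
The plan is to work in the transform domain, exploiting the fact that the $\Mprod$-product separates facewise under the hat transform, and that $\Mprod$-unitarity translates into ordinary facewise unitarity.

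First, I would establish a norm-relation between the primal and transform domains. Since $\hat{\tenA} = \tenA \nmodeprod{3} \matM = \tenA \nmodeprod{3}(c\matW)$, and the mode-$3$ product applies $\matM$ to each mode-$3$ fiber (tube), one obtains
\begin{equation*}
\Fnorm{\hat{\tenA}}^2 \;=\; \sum_{i,j} \Xnorm{\matM \tenA_{ij:}}{2}^2 \;=\; |c|^2 \sum_{i,j} \Xnorm{\matW \tenA_{ij:}}{2}^2 \;=\; |c|^2 \Fnorm{\tenA}^2,
\end{equation*}
where the last equality uses that $\matW$ is unitary (and hence an $\ell_2$-isometry on each fiber, even when $\tenA$ is real and $\matW$ complex, since $\Xnorm{\matW\vx}{2}^2=\vx^\ha\matW^\ha\matW\vx=\Xnorm{\vx}{2}^2$). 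Thus $\Fnorm{\hat{\tenA}} = |c|\,\Fnorm{\tenA}$ for every tensor $\tenA$.

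Next, I would use the facewise identity $\widehat{\tenU \Mprod \tenB} = \hat{\tenU}\faceprod\hat{\tenB}$ and the remark (noted immediately after the definition of $\Mprod$-unitary tensors) that $\tenU$ is $\Mprod$-unitary iff each frontal slice $\hat{\matU}_j$ of $\hat{\tenU}$ is an ordinary unitary matrix. Since the Frobenius norm of a tensor equals the sum of squared Frobenius norms of its frontal slices, and since unitary matrices preserve the matrix Frobenius norm on either side, I get
\begin{equation*}
\Fnorm{\hat{\tenU}\faceprod\hat{\tenB}}^2 \;=\; \sum_j \Fnorm{\hat{\matU}_j\hat{\matB}_j}^2 \;=\; \sum_j \Fnorm{\hat{\matB}_j}^2 \;=\; \Fnorm{\hat{\tenB}}^2 .
\end{equation*}

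Combining these two steps:
\begin{equation*}
\Fnorm{\tenU \Mprod \tenB}^2 \;=\; \frac{1}{|c|^2}\Fnorm{\widehat{\tenU\Mprod\tenB}}^2 \;=\; \frac{1}{|c|^2}\Fnorm{\hat{\tenU}\faceprod\hat{\tenB}}^2 \;=\; \frac{1}{|c|^2}\Fnorm{\hat{\tenB}}^2 \;=\; \Fnorm{\tenB}^2 .
\end{equation*}
The statement for right-multiplication $\Fnorm{\tenC \Mprod \tenU}$ follows by the same argument, using that multiplication on the right by a unitary matrix also preserves the matrix Frobenius norm facewise.

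The only subtle point — and what really forces the scalar-multiple-of-unitary hypothesis on $\matM$ — is the first step, namely that applying $\matM$ mode-$3$ rescales $\Fnorm{\cdot}$ by a uniform factor. For arbitrary invertible $\matM$ this would fail (different tubes would be stretched differently), and the cancellation between the two $|c|^2$ factors would break down; so I expect this uniform-scaling step to be the main conceptual point to spell out carefully, although mechanically it is short.
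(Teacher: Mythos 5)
Your proposal is correct and follows essentially the same route as the paper: both establish that the mode-3 transform by $\matM=c\matW$ rescales the Frobenius norm by $|c|$, then pass to the transform domain where $\Mprod$-unitarity becomes ordinary facewise unitarity preserving each frontal slice's Frobenius norm, and finally cancel the $|c|^2$ factors. The paper phrases the rescaling step via the mode-3 unfolding $\matB_{(3)}$ rather than via fibers as you do, but this is a presentational difference, not a conceptual one.
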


\begin{proof}
We have 
\begin{equation*}
\Fnorm{\hat{\tenB}} = \Fnorm{\tenB \nmodeprod{3} \matM} = \Fnorm{c\matW \matB_{(3)}} = |c|\cdot \Fnorm{\matB_{(3)}} = |c|\cdot \Fnorm{\tenB}
\end{equation*}
Let \(\tenD = \tenU \Mprod \tenB\). We have,
\begin{align*}
\FnormS{\tenB} &= \frac{1}{|c|^2}\FnormS{\hat{\tenB}} = \frac{1}{|c|^2}\sum_{i=1}^n \FnormS{\hat{\matB}_i} = \frac{1}{|c|^2}\sum_{i=1}^n \FnormS{\hat{\matU}_i \hat{\matB}_i} \\
&= \frac{1}{|c|^2}\sum_{i=1}^n \FnormS{\hat{\matD}_i} = \frac{1}{|c|^2}\FnormS{\hat{\tenD}} = \FnormS{\tenD}
\end{align*}
Proof for the second part is similar.
\end{proof}

\begin{corollary}[Corollary 3.3 in~\cite{kilmer-2021-tensor-tensor}]
Consider the \(\Mprod\)-SVD of a tensor \(\tenA\in\KM^{m\times p}\) given by \(\tenA = \tenU \Mprod \tenS \Mprod \tenV^\ha\). If \(\matM=c\matW\) for some unitary matrix \(\matW\) and non-zero scalar \(c\) then \(\FnormS{\tenA} = \FnormS{\tenS}=\sum_{i=1}^{\Mrank(\tenA)} \FnormS{\vsigma_i}\). Moreover, \(\Fnorm{\vsigma_1} \geq \Fnorm{\vsigma_2} \geq \cdots\).
\end{corollary}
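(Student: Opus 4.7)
The plan is to apply the preceding lemma twice to strip off the unitary factors, then use f-diagonality to collapse the Frobenius norm of $\tenS$ onto a sum over the singular tubes, and finally transfer the partial order $\Mgeq$ between consecutive singular tubes into an inequality between their Frobenius norms using the transform domain.

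First I would argue that $\tenV^\ha$ is $\Mprod$-unitary whenever $\tenV$ is (this is immediate from the definition of $\Mprod$-unitarity, which is symmetric in $\tenV$ and $\tenV^\ha$). Since $\tenU$ is $\Mprod$-unitary and $\matM=c\matW$ with $\matW$ unitary, the preceding lemma applied on the left gives $\Fnorm{\tenA}=\Fnorm{\tenU\Mprod(\tenS\Mprod\tenV^\ha)}=\Fnorm{\tenS\Mprod\tenV^\ha}$, and applied on the right (using that $\tenV^\ha$ is $\Mprod$-unitary) gives $\Fnorm{\tenS\Mprod\tenV^\ha}=\Fnorm{\tenS}$, so $\FnormS{\tenA}=\FnormS{\tenS}$.

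Next, because $\tenS$ is f-diagonal, every entry of $\tenS$ that lies off the $(i,i)$ diagonal positions is the zero tube. Writing $\FnormS{\tenS}$ as the sum of squared magnitudes of all its entries, only the diagonal tubes contribute, so $\FnormS{\tenS}=\sum_{i=1}^{\min(m,p)}\FnormS{\vsigma_i}$. By the definition of the $\Mprod$-rank, $\vsigma_i=\vzero$ for $i>\Mrank(\tenA)$, so the sum truncates at $\Mrank(\tenA)$, giving the claimed identity.

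For the ordering, I would pass to the transform domain. The theorem for $\Mprod$-SVD gives $\vsigma_1\Mgeq\vsigma_2\Mgeq\cdots\Mgeq\vzero$, which by the definition of $\Mleq$ means that the vectors $\hat{\vsigma}_i=\matM\vsigma_i$ are real, entry-wise non-negative, and satisfy $\hat{\vsigma}_i\geq\hat{\vsigma}_{i+1}$ entry-wise. Therefore $\FnormS{\hat{\vsigma}_i}\geq\FnormS{\hat{\vsigma}_{i+1}}$. The same computation used in the preceding lemma, specialized to a single tube, yields $\FnormS{\hat{\vsigma}_i}=\Fnorm{\matM\vsigma_i}^2=\Fnorm{c\matW\vsigma_i}^2=|c|^2\FnormS{\vsigma_i}$, so dividing by $|c|^2>0$ preserves the inequality and we obtain $\Fnorm{\vsigma_1}\geq\Fnorm{\vsigma_2}\geq\cdots$. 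The only potentially subtle step is this last one, namely translating $\Mgeq$ into Frobenius-norm ordering; but once one observes that $\Mgeq$ is defined precisely so that the transformed tubes are non-negative reals, the inequality between squared 2-norms is immediate and the scaling by $|c|^2$ is harmless.
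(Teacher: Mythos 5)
Your proof is correct and follows essentially the same route as the paper: strip the $\Mprod$-unitary factors using the preceding lemma, expand $\FnormS{\tenS}$ over its diagonal tubes via f-diagonality, and convert the $\Mgeq$ ordering into a Frobenius-norm ordering by passing to the transform domain and scaling by $|c|$. You are a bit more explicit than the paper in checking that $\tenV^\ha$ is $\Mprod$-unitary and in unwinding the definition of $\Mgeq$, but the underlying argument is the same.
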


\begin{proof}
\(\FnormS{\tenA} = \FnormS{\tenS}\) follows from the previous lemma, while
\[
\FnormS{\tenS}=\sum_{i=1}^{\Mrank(\tenA)} \FnormS{\vsigma_i}
\]
follows from the definition of the Frobenius norm since \(\tenS\) is diagonal with the singular tubes on the diagonal.
As for the second part of the corollary, since \(\matM = c\matW\), we have \(\Fnorm{\vsigma_j} = \frac{1}{|c|}\Fnorm{\hat{\vsigma}_j}\) for \(j=1,\dots,\min(m,p)\). Clearly, for any two tubes \(\va\) and \(\vb\) of the same size \(\va \Mgeq \vb\) implies \(\Fnorm{\hat{\va}} \geq \Fnorm{\hat{\vb}}\), and combined with the equality \(\Fnorm{\vsigma_j} = \frac{1}{|c|}\Fnorm{\hat{\vsigma}_j}\) we get the desired result.
\end{proof}

Now, we state a first Eckart-Young-like theorem for the tubal tensor algebra.
\begin{theorem}[Theorem 3.7 in \cite{kilmer-2021-tensor-tensor}]
\label{thm:first-eckart-young}
Assume that \(\matM=c\matW\) for some unitary matrix \(\matW\) and non-zero scalar \(c\). Let \(\tenA\in\KM^{m\times p}\) be a tensor and let \(\tenA = \tenU \Mprod \tenS \Mprod \tenV^\ha\) be its \(\Mprod\)-SVD. For any \(k\leq \Mrank(\tenA)\), the tensor \(\lrapprox{\tenA}{k} \coloneqq \tenU_{:,1:r,:} \Mprod \tenS_{1:r,1:r,:} \Mprod \tenV_{:,1:r,:}^\ha\) is the optimal \(\Mprod\)-rank-\(k\) approximation of \(\tenA\) in the Frobenius norm. That is, for any tensor \(\tenB\) of the same size as \(\tenA\) and \(\Mrank(\tenB)\leq k\), we have \(\FnormS{\tenA - \lrapprox{\tenA}{k}} \leq \FnormS{\tenA - \tenB}\). Moreover, \(\FnormS{\tenA - \lrapprox{\tenA}{k}} = \sum_{i=k+1}^{\Mrank(\tenA)} \FnormS{\vsigma_i}\).
\end{theorem}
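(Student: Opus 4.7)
The plan is to move to the transformed domain, where the $\Mprod$-product separates into facewise matrix products and the Frobenius norm (by the hypothesis $\matM = c\matW$) is a constant multiple of the primal Frobenius norm. This lets me reduce the tensor optimization problem to $n$ independent matrix optimization problems, each solved by the classical Eckart--Young theorem.

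More concretely, first I would observe that for any tensor $\tenB\in\KM^{m\times p}$ of size matching $\tenA$, the lemma preceding the theorem (or rather its proof) gives $\FnormS{\tenA - \tenB} = |c|^{-2}\FnormS{\hat{\tenA}-\hat{\tenB}} = |c|^{-2}\sum_{j=1}^n \FnormS{\hat{\matA}_j - \hat{\matB}_j}$, since the Frobenius norm of the transformed tensor decomposes as the sum of the squared Frobenius norms of its frontal slices. Next, I would invoke the characterization $\Mrank(\tenB)=\max_j \rank(\hat{\matB}_j)$ to translate the constraint $\Mrank(\tenB)\le k$ into the $n$ independent constraints $\rank(\hat{\matB}_j)\le k$ for $j=1,\dots,n$. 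Crucially, because the objective also decouples across $j$, the tensor minimization problem
\[
\min_{\Mrank(\tenB)\le k}\FnormS{\tenA-\tenB}
\]
is equivalent to solving, independently for each $j$, the matrix problem $\min_{\rank(\hat{\matB}_j)\le k}\FnormS{\hat{\matA}_j - \hat{\matB}_j}$.

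Now I would apply the classical Eckart--Young theorem slice by slice: for each $j$ the optimum is attained by the rank-$k$ truncation of the (standard) SVD of $\hat{\matA}_j$, with optimal squared error $\sum_{i>k}\sigma_i(\hat{\matA}_j)^2$. By the construction of the $\Mprod$-SVD in the proof of the earlier theorem, these facewise truncated SVDs are exactly the frontal slices of $\hat{\tenU}_{:,1:k,:}\faceprod\hat{\tenS}_{1:k,1:k,:}\faceprod\hat{\tenV}^{\ha\Delta}_{:,1:k,:}$, which upon transforming back to the primal domain yields precisely $\lrapprox{\tenA}{k} = \tenU_{:,1:k,:}\Mprod\tenS_{1:k,1:k,:}\Mprod\tenV^\ha_{:,1:k,:}$. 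Thus $\lrapprox{\tenA}{k}$ minimizes the tensor objective. Finally, summing the facewise Eckart--Young errors and converting back using $\Fnorm{\vsigma_i} = |c|^{-1}\Fnorm{\hat{\vsigma}_i}$ gives
\[
\FnormS{\tenA-\lrapprox{\tenA}{k}} = \frac{1}{|c|^2}\sum_{j=1}^n\sum_{i=k+1}^{\Mrank(\tenA)} \sigma_i(\hat{\matA}_j)^2 = \sum_{i=k+1}^{\Mrank(\tenA)}\FnormS{\vsigma_i},
\]
where the last step uses that the $i$-th diagonal entry of $\hat{\matS}_j$ is $\sigma_i(\hat{\matA}_j)$ and the definition of the $\Mprod$-singular tubes.

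The main obstacle, and really the only conceptual point that needs care, is ensuring that the $\Mprod$-rank constraint genuinely decouples across transformed frontal slices and therefore does not introduce coupling between the independent facewise subproblems. This rests entirely on the lemma that $\Mrank(\tenB) = \max_j \rank(\hat{\matB}_j)$; once that identity is in hand, both the objective and the constraint decouple, and the whole argument reduces to $n$ parallel invocations of classical Eckart--Young with the scaling $|c|^{-2}$ tracked through the isometry $\Fnorm{\tenB\nmodeprod{3}\matM} = |c|\Fnorm{\tenB}$.
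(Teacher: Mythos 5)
Your proof is correct and follows essentially the same route as the paper: pass to the transformed domain where $\matM = c\matW$ makes the Frobenius norm an isometry up to $|c|$, decompose the squared error into a sum over transformed frontal slices, decouple the $\Mprod$-rank constraint via $\Mrank(\tenB)=\max_j\rank(\hat{\matB}_j)$, and then apply classical Eckart--Young facewise, noting that the slicewise truncated SVDs assemble into the truncated $\Mprod$-SVD. One small stylistic caution: phrasing the reduction as an ``equivalence'' of optimization problems is slightly loose (the realness of $\tenB$ couples its transformed slices, so the feasible sets are not in bijection), but the argument you actually make --- that each slicewise error is individually lower-bounded by the corresponding Eckart--Young optimum, and that those optima are simultaneously attained by the real tensor $\lrapprox{\tenA}{k}$ --- is exactly what is needed and is what the paper does.
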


\begin{proof}
The square error result follows easily from the previous corollary. 

Let \(\tenB\) be a tensor of the same size as \(\tenA\) and \(\Mrank(\tenB)\leq k\). To show \(\FnormS{\tenA - \lrapprox{\tenA}{k}} \leq \FnormS{\tenA - \tenB}\) it is enough to show that this holds for each frontal slice on its own. However, since \(\matM=c\matW\) for some unitary matrix \(\matW\) and non-zero scalar \(c\), this is equivalent to showing it holds for frontal slices of the transformed tensors. Since the transformation is linear, we are left with showing that \(\FnormS{\hat{\matA}_j - (\widehat{\lrapprox{\tenA}{k}})_{::j}} \leq \FnormS{\hat{\matA}_j - \hat{\matB}_j}\) for \(j=1,\dots,n\).  Due to the linearity of the transformation, we have \(\hat{\lrapprox{\tenA}{k}} = \hat{\tenU}_{:,1:k,:} \faceprod \hat{\tenS}_{1:k,1:k,:} \faceprod \hat{V}^{\ha\Delta}_{:,1:k,:}\). For an index \(j\), we have \((\widehat{\lrapprox{\tenA}{k}})_{::j} = \hat{\tenU}_{:,1:k,j} \hat{\tenS}_{1:k,1:k,j}  \hat{V}^{\ha}_{:,1:k,j}\) where we are now using regular matrix multiplication (and implicitly assuming the singleton dimension is squeezed out). Due to the way \(\Mprod\)-SVD is built, the right hand side is a truncated SVD of \(\hat{\matA}_j\). This shows that \((\widehat{\lrapprox{\tenA}{k}})_j = \lrapprox{\hat{\matA}_j}{k}\), where here the square brackets denote matrix rank truncations. Since \(\Mrank(\tenB) \leq k\) we have \(\rank(\hat{\matB}_j)\leq k\). The Eckart-Young theorem now implies that
\begin{equation*}
\FnormS{\hat{\matA}_j - (\widehat{\lrapprox{\tenA}{k}})_j} = \FnormS{\hat{\matA}_j - \lrapprox{\hat{\matA}_j}{k}} \leq \FnormS{\hat{\matA}_j - \hat{\matB}_j}
\end{equation*}
\end{proof}

In the first Eckart-Young theorem we allowed only the same truncation level over all different frontal slices. We can allow more refined truncation. The corresponding Eckart-Young result is less elegant, but more useful. 
\begin{definition}
The \emph{\(\Mprod\)-multirank} of a tensor \(\tenA\),\linebreak denoted by \(\Mmultirank(\tenA)\), is \(n\)-sized tuple of non-negative integers in which the \(j\)th entry is equal to the rank \(\hat{\matA}_j\). Given a \(n\)-sized tuple of non-negative integers \(\vr\), we say that a tensor \(\tenX\) has \(\Mprod\)-multirank at most \(\vr\) if \(\Mmultirank(\tenA) \leq \vr\) entrywise. We say that a tuple \(\vr\) \emph{respects the conjugation pattern of \(\matM\)} if \(r_i = r_j\) for every conjugate pair of rows \(i,j\) in \(\matM\). Note that \(\Mmultirank(\tenA)\) respects the conjugation pattern of \(\matM\).
\end{definition}

\begin{theorem}[Theorem 3.8 in~\cite{kilmer-2021-tensor-tensor}]
\label{thm:second-eckart-young}
Assume that \(\matM=c\matW\) for some unitary matrix \(\matW\) and non-zero scalar \(c\). Let \(\tenA\in\KM^{m\times p}\) be a tensor and let \(\tenA = \tenU \Mprod \tenS \Mprod \tenV^\ha\) be its \(\Mprod\)-SVD. For any \(n\)-size tuple of non-negative integers \(\vr \leq \Mmultirank(\tenA)\) which respects the conjugation pattern of \(\matM\), we define the tensor \(\lrapprox{\tenA}{\vr}\) as the tensor in which the frontal domain \(j\) in the transformed space is the \(r_j\) truncated SVD of the \(j\)th frontal face of the transformed \(\tenA\). That is,
\begin{equation*}
(\widehat{\lrapprox{\tenA}{\vr}})_{::j} \coloneqq \lrapprox{\hat{\matA}_j}{r_j} = \hat{\matU}_{:,1:r_j,j} \hat{\matS}_{1:r_j,1:r_j,j} \hat{\matV}^\ha_{:,1:r_j,j}
\end{equation*}
where in the above we use regular matrix multiplication (and implicitly assume the singleton dimension is squeezed out). \(\lrapprox{\tenA}{\vr}\) is the the optimal \(\Mprod\)-multirank-\(k\) approximation of \(\tenA\) in the Frobenius norm. That is, for any tensor \(\tenB\) of the same size as \(\tenA\) and \(\Mprod\)-multirank at most \(\vr\), we have \(\FnormS{\tenA - \lrapprox{\tenA}{\vr}} \leq \FnormS{\tenA - \tenB}\). Moreover, \(\FnormS{\tenA - \lrapprox{\tenA}{\vr}} = |c|^{-1} \cdot \sum_{j=1}^n\sum_{i=r_j+1}^{\rank(\hat{\matA}_j)} \hat{\vsigma}^2_{ij}\).
\end{theorem}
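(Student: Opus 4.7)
The plan is to follow the same template as the proof of Theorem~\ref{thm:first-eckart-young}, but to exploit the fact that the problem \emph{separates} over frontal slices once we pass to the transformed domain, so that classical Eckart--Young can be applied to each slice independently with its own truncation level $r_j$.

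First I would reduce everything to the transformed domain. By the lemma preceding Theorem~\ref{thm:first-eckart-young}, for any tensor $\tenX$ we have $\Fnorm{\hat{\tenX}} = |c|\cdot\Fnorm{\tenX}$, and since $\hat{\tenA}-\hat{\tenB} = \widehat{\tenA-\tenB}$ (the $\matM$ transform is linear), it follows that
\[
\FnormS{\tenA-\tenB} \;=\; |c|^{-2}\, \FnormS{\hat{\tenA}-\hat{\tenB}} \;=\; |c|^{-2}\sum_{j=1}^{n}\FnormS{\hat{\matA}_j-\hat{\matB}_j}.
\]
Second, I would translate the constraint: by the lemma characterizing $\Mrank$ slice-wise, the condition $\Mmultirank(\tenB)\le \vr$ is equivalent to $\rank(\hat{\matB}_j)\le r_j$ for every $j$. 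Combining these two observations, the constrained minimization problem over $\tenB$ decomposes into $n$ independent matrix problems, one for each frontal slice in the transformed domain.

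Third, I would apply the classical matrix Eckart--Young theorem to each slice: the optimal $\hat{\matB}_j$ of rank at most $r_j$ approximating $\hat{\matA}_j$ in the Frobenius norm is the truncated SVD $\lrapprox{\hat{\matA}_j}{r_j} = \hat{\matU}_{:,1:r_j,j}\,\hat{\matS}_{1:r_j,1:r_j,j}\,\hat{\matV}^\ha_{:,1:r_j,j}$, which is exactly $(\widehat{\lrapprox{\tenA}{\vr}})_{::j}$ by construction, and the squared error on slice $j$ is $\sum_{i=r_j+1}^{\rank(\hat{\matA}_j)}\hat{\sigma}_{ij}^2$. Summing over $j$ and reinserting the $|c|^{-2}$ (or $|c|^{-1}$, as written in the statement) prefactor gives the claimed error formula.

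The only non-routine point — and the one deserving an explicit comment in the write-up — is a realizability check: the per-slice minimum is a valid lower bound for \emph{any} admissible competitor $\tenB$, but one must verify that this lower bound is actually attained by a genuine real tensor and not just by an arbitrary collection of complex frontal slices in the transformed domain. This is handled automatically because $\lrapprox{\tenA}{\vr}$ is built from the $\Mprod$-SVD of $\tenA$, whose factors $\tenU,\tenS,\tenV$ are real; inverting the $\matM$-transform therefore returns a real tensor in $\KM^{m\times p}$ whose transformed slices are exactly the classical truncated SVDs above. Hence the per-slice lower bound is simultaneously achieved, establishing both optimality and the error identity.
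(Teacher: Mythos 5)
Your main argument matches the paper's, which simply defers to the proof of Theorem~\ref{thm:first-eckart-young}: pass to the transformed domain via $\Fnorm{\hat{\tenX}} = |c|\Fnorm{\tenX}$, decompose the squared Frobenius error slicewise, identify the multirank constraint with per-slice rank constraints, and invoke classical Eckart--Young slice by slice. This is correct, and your constant $|c|^{-2}$ is in fact the right one; the $|c|^{-1}$ appearing in the paper's statement is a typographical error (since $\Fnorm{\hat{\tenX}}=|c|\Fnorm{\tenX}$, squaring gives a $|c|^{-2}$ prefactor when passing back). You noticed the discrepancy but phrased it as an aside rather than asserting the correction.

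The ``realizability check'' you appended, however, contains a genuine gap. You claim that because the $\Mprod$-SVD factors $\tenU,\tenS,\tenV$ are real, inverting the $\matM$-transform of the slicewise truncations automatically yields a real tensor. That inference is false in general. A tensor in the transformed domain is the image under $\matM$ of a real tensor precisely when its frontal slices respect the conjugate pairing of $\matM$'s rows: if row $j$ of $\matM$ is the complex conjugate of row $k$, the $j$-th slice must equal the entrywise conjugate of the $k$-th slice (this is the realness structure established in Section~\ref{sec:proof-winv-realness}). For the $\Mprod$-SVD of a real $\tenA$ the slices $\hat{\matU}_j,\hat{\matS}_j,\hat{\matV}_j$ do come in conjugate pairs, so $\lrapprox{\hat{\matA}_j}{r_j}$ and $\lrapprox{\hat{\matA}_k}{r_k}$ are entrywise conjugates \emph{only when} $r_j = r_k$. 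The theorem imposes no such constraint on $\vr$, so in general $\lrapprox{\tenA}{\vr}$ need not be real --- take, for instance, $\matM=\matF_3$ and $\vr=(1,2,1)$ on a tensor of $\Mprod$-multirank $(1,2,2)$, which assigns unequal truncation levels to the conjugate pair of rows $2$ and $3$. The inequality $\FnormS{\tenA-\lrapprox{\tenA}{\vr}}\le\FnormS{\tenA-\tenB}$ still holds as stated, since it is a slicewise lower bound valid for every real competitor $\tenB$; but your claim that $\lrapprox{\tenA}{\vr}$ is itself an admissible element of $\KM^{m\times p}$ requires the additional hypothesis that $\vr$ assigns equal truncation levels to conjugate pairs of $\matM$'s rows, and the argument you gave does not establish this.
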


\begin{proof}
The proof is essentially the same as the proof of the previous theorem, so we omit it.
\end{proof}

A worked example illustrating multirank truncation, building on the t-SVD computation referenced above, is given in Appendix~\ref{app:worked-examples} (Example~\ref{ex:truncation-worked}).

\subsection{Hilbert Algebra Structure}

Even though the t-SVD exists for {\em every} tubal ring, Eckart-Young-like optimality was proved only for $\matM$ being proportional to a unitary matrix. In this subsection we seek to understand what distinguishes tubal rings with scaled unitary $\matM$ from ones that do not. In Appendix~\ref{app:hilbert-algebra} we show that for scaled unitary $\matM$ (and a slightly larger class of matrices), the tubal ring carries a so-called ``Hilbert algebra'' structure~\cite{Takesaki2003}: the adjoint, with respect to the Frobenius inner product, of the multiplication operator $T_\va(\vx)\coloneqq\va\Mprod\vx$ is given by multiplication by the conjugate $\va^\sconj$. Though we have not formally linked this Hilbert algebra structure to the Eckart-Young-like results (Theorems~\ref{thm:first-eckart-young} and~\ref{thm:second-eckart-young}), we conjecture that the Hilbert structure is what enables them. 

\if0
\subsection{Truncating Based on Error Threshold}
\label{sec:org071ec6d}

The second part of the previous theorem suggests a method to how to select the multirank tuple \(\vr\) to truncated based on. The idea is to truncate first singular values of each frontal slice that contribute the least to the Frobenius norm of the tensor. This will minimize the error given a budget on the sum of entries in \(\vr\). Since we know the error, we can use this to select the \(\vr\) so the smallest vector so that the error drops below some predefined threshold.

In more details, we first note that we know that 
\begin{equation*}
\FnormS{\tenA} \propto \FnormS{\hat{\tenA}} = \sum_{j=1}^n \sum_{i=1}^{\rank(\hat{\matA}_j)} \hat{\vsigma}^2_{ij}
\end{equation*}
Given a error budget \(\gamma\in(0,1)\), our goal is to select \(\vr\) so that
\begin{equation*}
\frac{\FnormS{\tenA - \lrapprox{\tenA}{\vr}}}{\FnormS{\tenA}} = \frac{\sum_{j=1}^n\sum_{i=r_j+1}^{\rank(\hat{\matA}_j)} \hat{\vsigma}^2_{ij}}{\sum_{j=1}^n \sum_{i=1}^{\rank(\hat{\matA}_j)} \hat{\vsigma}^2_{ij}}\leq \gamma
\end{equation*}
The last expression show that we can select \(\vr\) by sorting all the frontal singular values of the transformed tensor, and inspecting their cumulative sums. The first index in which the cumulative sum exceeds a  \(1-\gamma\) fraction of the total sum gives a lower bound on which singular values to keep. The idea is summarized in the following algorithm, which is known in the literature as \emph{t-SVDMII}~\cite{kilmer-2021-tensor-tensor}.

\begin{algorithm}[H]
\caption{t-SVDMII: Truncated Tubal SVD via Error Threshold \cite{kilmer-2021-tensor-tensor}}
\begin{algorithmic}[1]
\State \textbf{Input:} \(\tenA\in\KM^{m\times p}\), \(\gamma\in(0,1)\)
\State \textbf{Output:} \(\vr\), \(\hat{\matU}_j \in \C^{m\times r_j}\), \(\hat{\matS}_j \in \R^{r_j\times r_j}\), \(\hat{\matV}_j \in \C^{p\times r_j}\)
\State Compute the \(\Mprod\)-SVD of \(\tenA\): \(\tenA = \tenU \Mprod \tenS \Mprod \tenV^\ha\)
\State Compute the transformed factors \(\hat{\tenU}\), \(\hat{\tenS}\), \(\hat{\tenV}\) 
\State Concatenate \(\hat{s}^2_{iij}\) for all \(i,j\) into a single vector \(\vv\). 
\State Sort \(\vv\) in decreasing order, and compute the cumulative sum \(\vw\): \(w_k = \sum_{i=1}^k v_i\).
\State Find the smallest \(K\) such that \(w_K \geq (1-\gamma)\FnormS{\hat{\tenS}}\).
\State \(\tau \gets v_K\)
\For{j=1,\dots,n}
\State Set \(r_j\) as the number of singular values of \(\hat{\matA}_j\) greater or equal to \(\tau\).
\EndFor
\State \Return \(\vr\), \(\hat{\matU}_j \coloneqq \hat{\tenU}_{:,1:r_j,j}\), \(\hat{\matS}_j \coloneqq \hat{\tenS}_{1:r_j,1:r_j,j}\), \(\hat{\matV}_j \coloneqq \hat{\tenV}_{:,1:r_j,j}\) for \(j=1,\dots,n\)
\end{algorithmic}
\end{algorithm}
\fi

\section{Conclusions}

The goal of this paper was to show that tubal tensor algebra could be derived from first principles given the tubal precept: view a third-order tensor as a matrix of vectors (called tubes). We showed how to construct  $\Mprod$ using a series of logical steps, and that the resulting construction was unavoidable. The resulting product is the only way to define a ring of tubes with all the desired properties (commutative, unital and von Neumann regular). In doing so, we aim to partially demystify tubal tensor algebra: tubal tensor algebra is the only construction that translates the tubal precept to a well-founded mathematical theory with desired properties.

At the same time, our paper does not answer a yet unresolved key  question regrading $\Mprod$: which $\matM$ should be used, and why some choices of $\matM$ work better than others (in applications)? We do know, empirically, that  FFT-like transforms and wavelet transforms work better than random orthogonal ones~\cite{kilmer-2021-tensor-tensor}, but there is no theoretical explanation for this. Connected is recent literature on learning the $\matM$ in $\Mprod$ from data~\cite{elizabeth2024a}.

Another interesting topic to consider is the utility of relaxing some of the requirements of the tubal product. Keegan and Newman recently introduced a matrix-mimetic tensor algebra with Eckart-Young-like optimality that relaxes the invertibility requirement on $\matM$ with the requirement for a matrix $\matQ$ with orthonormal columns~\cite{keegan2024a}. In followup work, the second author established sufficient and necessary conditions Eckart-Young-like optimality result of tubal tensors~\cite{mor-2025-sufficientnecessary}. In another work, Eckart-Young-like optimality was shown for ring groups even with non-abelian groups, which are  non-commutative rings.  We leave complete characterization of Eckart-Young optimality of non-commutative rings to future research.

\section{Proof of Theorem~\ref{thm:main}}
\label{sec:proof}

Throughout this section, we assume that \((\R^n, +, \cdot)\) is a tubal ring. Since tubal rings are von Neumann regular, we denote for every \(\va\in\R^n\) by \(\va^\winv\) the unique element for which \(\va = \va\cdot\va^{\winv}\cdot\va\) and \(\va^\winv=\va^{\winv}\cdot\va\cdot\va^{\winv}\), eschewing the previous definition in Eq.~\eqref{eq:def-winv} which requires the existence of \(\matM\) a-priori. Once we prove Theorem~\ref{thm:main} the two definitions coincide.

We also make use of the following result:
\begin{lemma}[Fact 5.17.8 in \cite{bernstein-2011-matrix-mathem}]
\label{lem:joint-diag-commutative}
    Let $S \subseteq \C^{n\times n}$, and assume every matrix $\matA \in S$ is diagonalizable over $\C$. Then, $\matA \matB = \matB \matA$ for all $\matA,\matB\in S$ if and only if there exists a nonsingular $\matS\in \C^{n\times n}$ such that, for all $\matA\in\C^{n\times n}$, $\matS\matA\matS^{-1}$ is diagonal.
\end{lemma}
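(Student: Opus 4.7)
The plan is to handle the two directions separately. The ``if'' direction is trivial: if an invertible $\matS$ simultaneously diagonalizes $S$, then for every $\matA,\matB\in S$ the matrices $\matS\matA\matS^{-1}$ and $\matS\matB\matS^{-1}$ are diagonal and hence commute, and conjugating back gives $\matA\matB=\matB\matA$. All the real work is in the converse, which I would prove by induction on the ambient dimension $n$.

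For the inductive step, the base $n=1$ is immediate. For general $n$, if every element of $S$ is a scalar multiple of $\matI$ then any invertible $\matS$ works, so we may assume some $\matA_0\in S$ has at least two distinct eigenvalues $\lambda_1,\dots,\lambda_k$ with eigenspaces $V_{\lambda_1},\dots,V_{\lambda_k}$. Diagonalizability of $\matA_0$ gives $\C^n=\bigoplus_i V_{\lambda_i}$ with each $\dim V_{\lambda_i}<n$. The standard commutation argument (if $\vv\in V_{\lambda_i}$ then $\matA_0\matB\vv=\matB\matA_0\vv=\lambda_i \matB\vv$) shows that every $V_{\lambda_i}$ is invariant under every $\matB\in S$. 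The inductive hypothesis, applied to the restricted families $S_i\coloneqq\{\matB|_{V_{\lambda_i}}:\matB\in S\}$ on each $V_{\lambda_i}$, then yields bases of $V_{\lambda_i}$ simultaneously diagonalizing $S_i$; concatenating these across $i$ produces a basis of $\C^n$ that simultaneously diagonalizes all of $S$, and the corresponding change-of-basis matrix is the desired $\matS$.

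The main obstacle — and the step most worth stating carefully — is the lemma that the restriction $\matB|_{V_{\lambda_i}}$ of a diagonalizable operator $\matB$ to an invariant subspace is again diagonalizable. I would prove this via minimal polynomials: $\matB$ is diagonalizable over $\C$ iff its minimal polynomial is a product of distinct linear factors, and since $\matB|_{V_{\lambda_i}}$ is annihilated by the minimal polynomial of $\matB$, its own minimal polynomial divides that one and hence is also a product of distinct linear factors. This is the only nontrivial ingredient; the commutation argument and the splitting into eigenspaces are routine.

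Two small points to address to keep the argument clean. First, $S$ may be infinite, but this is harmless because the induction is on $n$ and at each step we only need to pick a single non-scalar $\matA_0\in S$; the commuting/diagonalizable properties of the restricted family are inherited from $S$ element by element. Second, the statement as written says ``for all $\matA\in\C^{n\times n}$,'' which is clearly a typo for ``for all $\matA\in S$'' — the universal statement over $\C^{n\times n}$ is false whenever $n\ge 2$ since non-diagonalizable matrices exist — and I would prove the corrected version.
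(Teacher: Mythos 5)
The paper does not prove this lemma itself; it cites it as Fact 5.17.8 of Bernstein's \emph{Matrix Mathematics} and uses it as a black box in Lemma~\ref{lem:diag-to-joint-diag}, so there is no in-paper argument to compare against. Your proof is the standard textbook derivation and it is correct: the ``if'' direction is immediate, and the converse goes by induction on $n$ after splitting $\C^n$ along the eigenspaces of a non-scalar $\matA_0 \in S$, noting that commutation forces each eigenspace $V_{\lambda_i}$ to be invariant under all of $S$, and that the restricted operators still commute pairwise. The one genuinely nontrivial ingredient you isolate --- that the restriction of a diagonalizable operator to an invariant subspace is again diagonalizable, via the minimal polynomial of the restriction dividing that of the ambient operator --- is exactly the right lemma, and your treatment of it is correct. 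Your two housekeeping remarks are also both right: infinite $S$ causes no trouble since each inductive step only selects a single element, and the quantifier ``for all $\matA \in \C^{n\times n}$'' in the statement is indeed a typo for ``for all $\matA \in S$'' (otherwise the claim is false for $n \ge 2$). The only cosmetic point worth a sentence in a final write-up is that if $\matP$ is the matrix whose columns are the concatenated eigenbasis, then $\matP^{-1}\matA\matP$ is diagonal, so the $\matS$ of the statement is $\matP^{-1}$ rather than $\matP$ --- a harmless inversion, but worth being explicit about.
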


\subsection{Sub-algebra of Matrices}

We begin showing that there is a natural way to encode {\em any} tubal ring as a sub-algebra of \(n\times n\) matrices. 

Consider a fixed \(\va \in \R^n\). The map \(\vx \mapsto \va \cdot \vx\) is linear (since \(\cdot\) is bilinear). Thus, there is a unique matrix \(\matR_\va \in \R^{n\times n}\) for which \(\matR_\va \vx = \va \cdot \vx\) for all \(\vx\).
\begin{definition}
    \label{def:rep-mat}
    Given \(\va \in \R^n\), the unique matrix \(\matR_\va \in \R^{n\times n}\) for which \(\matR_\va \vx = \va \cdot \vx\) for all \(\vx\) is called the {\em representation matrix of $\va$}. For a set $S\subseteq \R^n$, we use \(\Rep(S)\) to denote the set of all representative matrices for elements of $S$, i.e.,
    \[
    \Rep(S) \coloneqq \left\{\matR_\va\,:\,\va\in S \right\}
    \]
\end{definition}
\begin{example}[Representation matrix for \(\Mprod\)]\label{example:vetmat.identification.mprod}
    Consider the \(\cdot\) defined by Eq.~\eqref{eq:Mprod} for some \(\matM\in\C^{n\times n}\). Then,
    \begin{equation*}
        \matR_\va = \matM^{-1} \diag(\matM \va) \matM
    \end{equation*}
\end{example}

The following sequence of lemmas establish that the map \(\va \mapsto \matR_\va\) completely defines a linear-subspace and sub-algebra of \(\R^{n \times n}\) matrices.

\begin{lemma}
    \(\matR_{\vzero_{n}} = \matZero_{n\times n}\) where \(\vzero_n\) is the $n$ sized vector of zeros, and \(\matZero_{n \times n}\) is the \(n\times n\) matrix of zeros. There is no non-zero $\vx$ for which $\matR_\vx = \matZero_{n\times n}$.
\end{lemma}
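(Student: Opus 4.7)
The plan is to exploit two basic structural facts about the tubal ring: first that it is a ring (so the usual zero-annihilator identity $\vzero_n \cdot \vx = \vzero_n$ holds), and second that it is unital (so there is a multiplicative identity $\unitM \in \R^n$ with $\unitM \cdot \vx = \vx \cdot \unitM = \vx$ for all $\vx$). Both statements reduce to evaluating the defining identity $\matR_\va \vx = \va \cdot \vx$ at cleverly chosen inputs.

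For the first claim, I would argue that in any ring the zero element annihilates under multiplication: expanding $\vzero_n \cdot \vx = (\vzero_n + \vzero_n) \cdot \vx = \vzero_n \cdot \vx + \vzero_n \cdot \vx$ via distributivity and then cancelling in the abelian group $(\R^n, +)$ forces $\vzero_n \cdot \vx = \vzero_n$ for every $\vx \in \R^n$. By the definition of the representation matrix, this gives $\matR_{\vzero_n} \vx = \vzero_n$ for all $\vx$, so $\matR_{\vzero_n}$ is the zero matrix by the uniqueness clause of Definition~\ref{def:rep-mat}.

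For the second claim, suppose for contradiction that $\vx \ne \vzero_n$ and $\matR_\vx = \matZero_{n \times n}$. Then $\vx \cdot \vy = \matR_\vx \vy = \vzero_n$ for every $\vy \in \R^n$. Applying this with $\vy = \unitM$, the unit of the tubal ring, and using commutativity together with the unit axiom, yields $\vzero_n = \vx \cdot \unitM = \vx$, contradicting our assumption.

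There is no real obstacle here; the only thing to be careful about is invoking the right axioms (distributivity for the annihilator identity, and the existence of the unit $\unitM$ guaranteed by the definition of a tubal ring). This short lemma is evidently a setup step for later results that will identify $\Rep(\R^n)$ with a commutative subalgebra of $\R^{n \times n}$ and apply Lemma~\ref{lem:joint-diag-commutative}; its role is simply to ensure that $\va \mapsto \matR_\va$ is injective and sends the zero tube to the zero matrix, so that $\Rep$ is a linear isomorphism onto its image.
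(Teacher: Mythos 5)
Your proof is correct and follows essentially the same route as the paper: the first claim via the standard ring-theoretic fact that the additive identity annihilates under multiplication, and the second via the existence of a unit in the tubal ring. The one small improvement over the paper's exposition is that you explicitly exhibit the witness $\vy$ (the unit element) showing $\matR_\vx \ne \matZero_{n\times n}$, whereas the paper only asserts that such a $\vy$ must exist; note, though, that at this point in the argument the tubal ring is abstract and not yet known to be of the form $\KM$, so the unit should be written as $\ve$ rather than $\unitM$.
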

\begin{proof}
   The element  $\vzero_{n \times 1}$ is the additive identity of the ring, making it\linebreak \textit{the} (unique) ring's zero element.
    As such, $\vzero_n$ is an absorbing element, i.e., $\vzero_n \cdot \vb = \vzero_n$,  for any $\vb \in \R^n$. Thus, we have $\matR_{\vzero_n} \vb = \vzero_n$ for all $\vb$, so $\matR_{\vzero_n}$ must be the zero matrix. Furthermore, if $\vx\neq\vzero_n$ then there must be a some $\vy$ such that $\vx\cdot\vy\neq\vzero_n$, so $\matR_{\vx}\vy\neq\vzero_n$ so we must have $\matR_{\vx}\neq\matZero_{n\times n}$.
\end{proof}

\begin{lemma}
    The map \(\va \mapsto \matR_\va\) is injective. 
\end{lemma}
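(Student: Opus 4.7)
The plan is to reduce injectivity to the kernel-triviality statement already proved in the preceding lemma by exploiting the linearity of the representation map. First I would verify that $\va \mapsto \matR_\va$ is $\R$-linear: for any $\va,\vb\in\R^n$ and $\alpha\in\R$, the distributivity of $\cdot$ over $+$ in the ring gives $(\va+\vb)\cdot\vx = \va\cdot\vx + \vb\cdot\vx$ for every $\vx$, and the assumed compatibility of scalar multiplication with the ring product gives $(\alpha\va)\cdot\vx = \alpha(\va\cdot\vx)$. Hence $(\matR_\va+\matR_\vb)\vx = \matR_{\va+\vb}\vx$ and $(\alpha\matR_\va)\vx = \matR_{\alpha\va}\vx$ for every $\vx\in\R^n$, and by the uniqueness of the representation matrix (which is built into Definition~\ref{def:rep-mat}), $\matR_{\va+\vb}=\matR_\va+\matR_\vb$ and $\matR_{\alpha\va}=\alpha\matR_\va$.

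Once linearity is established, injectivity is equivalent to showing that $\matR_\vc=\matZero_{n\times n}$ implies $\vc = \vzero_n$. Suppose $\matR_\va = \matR_\vb$. Then by linearity $\matR_{\va-\vb} = \matR_\va - \matR_\vb = \matZero_{n\times n}$. The second half of the previous lemma states precisely that no non-zero vector has the zero representation matrix, so $\va-\vb = \vzero_n$, i.e., $\va = \vb$.

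There is essentially no obstacle here: the hard content lives in the preceding lemma (which in turn rests on the fact that $\vzero_n$ is the unique absorbing element of the ring and that every non-zero element acts non-trivially on at least one vector). The present lemma is a one-line consequence of that fact combined with the bilinearity of the ring product. I would therefore write the proof as two short displays: one establishing $\R$-linearity of $\va \mapsto \matR_\va$, and one invoking the previous lemma on $\va - \vb$.
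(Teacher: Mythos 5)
Your proof is correct and follows essentially the same route as the paper's: show $\matR_{\va-\vb}$ is the zero matrix (via the additivity of $\va\mapsto\matR_\va$, which comes from distributivity of $\cdot$ over $+$), then invoke the preceding lemma to conclude $\va-\vb=\vzero_n$. The only cosmetic difference is that you state the linearity of the representation map as a separate step before applying it, whereas the paper inlines that computation; incidentally the paper's phrase ``due to associativity'' is a slip — it is distributivity that is being used, as you correctly identify.
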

\begin{proof}
    Suppose \(\matR_\va = \matR_\vb\). Due to associativity, for all \(\vx\)
    \begin{equation*}
    \matR_{\va - \vb} \vx = (\va - \vb)\cdot \vx = \va \cdot \vx - \vb \cdot \vx = \matR_\va \vx - \matR_\vb \vx = (\matR_\va - \matR_\vb)\vx = \vzero_n
    \end{equation*}
    Since this holds for all \(\vx\) we conclude that \(\matR_{\va - \vb} = \matZero_{n\times n}\). Thus, \(\va - \vb = \vzero_n\) which implies that \(\va = \vb\).
\end{proof}

\begin{lemma}
\label{lem:rep-linear}
    For any \(\alpha \in \R\) and \(\vx,\vy \in \R^n\):
    \begin{enumerate}
        \item \(\matR_{-\vx} = -\matR_{\vx}\)
        \item \(\matR_{\vx + \vy} = \matR_\vx + \matR_\vy\)
        \item \(\matR_{\alpha \vx} = \alpha \matR_\vx\)
        \item If \(\ve\) denotes the multiplicative identity element in the ring, \(\matR_\ve = \matI_n\).
        \item \(\matR_{\vx \cdot \vy} = \matR_\vx \matR_\vy\)
    \end{enumerate}
\end{lemma}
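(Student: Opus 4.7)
The plan is to verify all five identities by the same template: for each claim $\matR_{f(\vx,\vy,\alpha)} = g(\matR_\vx, \matR_\vy, \alpha)$, I apply both matrices to an arbitrary test vector $\vz \in \R^n$, rewrite using the defining relation $\matR_\va \vz = \va \cdot \vz$, and then appeal to one specific ring or algebra axiom so that the two sides coincide. Since the representation matrix of an element is unique (as established in Definition~\ref{def:rep-mat} via the bilinearity of $\cdot$), equality on all $\vz$ forces equality of the matrices themselves.

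Concretely, I would carry out the five steps in the following order. For (4), compute $\matR_\ve \vz = \ve \cdot \vz = \vz = \matI_n \vz$, using that $\ve$ is the multiplicative identity of the ring; uniqueness gives $\matR_\ve = \matI_n$. For (2), the right distributive law $(\vx+\vy)\cdot \vz = \vx\cdot \vz + \vy\cdot \vz$ together with additivity of matrix action yields $\matR_{\vx+\vy}\vz = (\matR_\vx+\matR_\vy)\vz$. For (1), apply (2) to $\vx + (-\vx) = \vzero_n$ and use the previous lemma's identity $\matR_{\vzero_n} = \matZero_{n\times n}$ to deduce $\matR_{-\vx} = -\matR_\vx$; alternatively, one can argue directly from $(-\vx)\cdot \vz + \vx \cdot \vz = \vzero_n$. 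For (3), use the $\R$-algebra compatibility $\alpha(\vx\cdot\vz) = (\alpha\vx)\cdot\vz$ (which is part of the definition of an associative $\R$-algebra with the usual scalar-vector product) to obtain $\matR_{\alpha\vx}\vz = \alpha \matR_\vx \vz$. For (5), invoke associativity of the ring multiplication: $(\vx\cdot\vy)\cdot\vz = \vx\cdot(\vy\cdot\vz)$, and rewrite the right-hand side as $\matR_\vx(\matR_\vy \vz) = (\matR_\vx \matR_\vy)\vz$.

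There is no genuine obstacle here; the lemma is essentially a translation of the ring and $\R$-algebra axioms of the tubal ring into matrix identities, made possible by the uniqueness of $\matR_\va$. The only subtlety worth flagging is item (3), where it is crucial that the tubal ring is assumed to be an associative $\R$-algebra with respect to the \emph{usual} scalar-vector product on $\R^n$ (not merely a ring); this is precisely the property that the $\R$-action commutes through the ring product on either side, and without it the map $\va\mapsto\matR_\va$ would only be $\Z$-linear rather than $\R$-linear. Once (1)--(5) are in hand, the image $\Rep(\R^n)$ is closed under addition, scalar multiplication, and matrix multiplication, and contains $\matI_n$, so it is an $\R$-subalgebra of $\R^{n\times n}$, which is exactly what the subsequent proof of Theorem~\ref{thm:main} will need.
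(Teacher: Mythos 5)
Your proposal is correct and uses essentially the same method as the paper: evaluate $\matR$ on an arbitrary test vector, rewrite via $\matR_\va\vz = \va\cdot\vz$, invoke the relevant ring or $\R$-algebra axiom, and conclude by uniqueness of the representation matrix. Your minor variant for item (1) (deriving it from item (2) together with $\matR_{\vzero_n} = \matZero_{n\times n}$) is equivalent to the paper's direct computation, and your observation that item (3) is the one place genuinely requiring the $\R$-algebra compatibility rather than mere ring axioms matches the paper's emphasis.
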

\begin{proof}

    \begin{enumerate}
        \item Since a tubal ring is an algebra, we have for every $\va$, $\matR_{-\vx}\va=(-\vx)\cdot\va=-(\vx\cdot\va) = -\matR_{\vx}\va$. This holds for all $\va$, so $\matR_{-\vx}=-\matR_{\vx}$.
        \item For every $\va$, $\matR_{\vx+\vy}=(\vx + \vy)\cdot\va = \vx \cdot \va + \vy \cdot \va = \matR_{\vx}\va + \matR_{\vy}\va=(\matR_{\vx} + \matR_{\vy})\va$. So, \(\matR_{\vx + \vy} = \matR_\vx + \matR_\vy\).
        \item Since a tubal ring is an algebra over $\R$, we have for every $\va$, $\matR_{\alpha\vx}\va=(\alpha\vx)\cdot\va=\alpha(\vx\cdot\va) = \alpha\matR_{\vx}\va$. So, \(\matR_{\alpha \vx} = \alpha \matR_\vx\).
        \item For every $\va$, $\matR_{\ve}\va = \ve \cdot \va = \va$, so $\matR_{\ve}$ is the identity matrix.
        \item For every $\va$, $\matR_{\vx\cdot\vy} = (\vx \cdot \vy)\cdot \va = \vx \cdot (\vy \cdot \va) = \matR_{\vx} \matR_\vy \va$, so \(\matR_{\vx \cdot \vy} = \matR_\vx \matR_\vy\). 
    \end{enumerate}
\end{proof}
\begin{corollary}
    \(\Rep(\R^n)\) is linear subspace of \(\R^{n\times n}\), and also an algebra (so is a sub-algebra of $\R^{n\times n}$). 
\end{corollary}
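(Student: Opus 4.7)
The plan is to derive this corollary as an essentially immediate consequence of Lemma~\ref{lem:rep-linear}, since every structural identity needed has already been established there. My proof would proceed by simply verifying the axioms in turn and citing the appropriate clause of the lemma.

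First, to show $\Rep(\R^n)$ is a linear subspace of $\R^{n\times n}$, I would check the three standard subspace properties. Nonemptiness follows from the preceding lemma, which gives $\matR_{\vzero_n}=\matZero_{n\times n}\in\Rep(\R^n)$. Closure under addition is part 2 of Lemma~\ref{lem:rep-linear}: for any two elements $\matR_\vx,\matR_\vy\in\Rep(\R^n)$, their sum equals $\matR_{\vx+\vy}$, which lies in $\Rep(\R^n)$ because $\vx+\vy\in\R^n$. Closure under scalar multiplication is part 3: for $\alpha\in\R$ one has $\alpha\matR_\vx=\matR_{\alpha\vx}\in\Rep(\R^n)$.

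For the sub-algebra claim, the only additional step is to check closure under matrix multiplication, which is exactly part 5 of Lemma~\ref{lem:rep-linear}: the product of two representation matrices satisfies $\matR_\vx\matR_\vy=\matR_{\vx\cdot\vy}$, and since $\vx\cdot\vy\in\R^n$ the result again lies in $\Rep(\R^n)$. As a bonus, part 4 of the same lemma shows $\matI_n=\matR_\ve\in\Rep(\R^n)$, so the sub-algebra is in fact unital.

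There is no real obstacle in this proof; every technical identity was absorbed into Lemma~\ref{lem:rep-linear}, and the corollary is merely a repackaging of those identities as closure statements for a subspace and an algebra. I would keep the write-up to a few sentences, one per closure property, with explicit cross-references to the relevant items of Lemma~\ref{lem:rep-linear}. It is worth remarking in passing that, combined with the injectivity lemma proven just above, the map $\va\mapsto\matR_\va$ is an $\R$-algebra isomorphism from the tubal ring onto $\Rep(\R^n)$; this observation is not strictly needed for the corollary, but it motivates the role $\Rep(\R^n)$ will play in the remainder of the proof of Theorem~\ref{thm:main}.
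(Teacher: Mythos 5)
Your proof is correct and matches the paper's intent exactly: the corollary is stated without proof as an immediate consequence of Lemma~\ref{lem:rep-linear}, and your systematic verification of the subspace and algebra closure properties via parts 2, 3, 4, and 5 of that lemma is precisely the argument the authors leave implicit.
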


The followings play an important role in our proof.
\begin{definition}
    An element $\va\in\R^n$ is called {\em idempotent} if $\va \cdot \va = \va$.  
\end{definition}

\begin{lemma}
    $\va\in\R^n$ is idempotent if and only if the matrix $\matR_\va$ is idempotent.
\end{lemma}
\begin{proof}
    Obvious from Lemma~\ref{lem:rep-linear}.
\end{proof}

\subsection{Diagonalizability of Tubal Rings}

The multiplication \(\cdot\) is defined between two real vectors. However, it will be useful to work also with complex operands. Thus, for \(\va \in \R^n\) and \(\vb\in\C^n\) we define:
\[
    \va \cdot \vb \coloneqq \va \cdot \rpart\vb + i (\va \cdot \ipart\vb)
\]
We still have \(\va \cdot \vb = \matR_\va \vb\).

\begin{definition}
    We say that a non-zero \(\vx\in\C^n\) is an {\em eigenvector of \(\va\)} if there exists corresponding {\em eigenvalue} \(\lambda\in\C\) such that 
    \[
        \va \cdot \vx  = \lambda \vx
    \]
    Equivalently, \((\lambda, \vx)\) is an eigenpair of \(\matR_\va\).
\end{definition}

\begin{definition}
    We say that \(\va\) is {\em diagonalizable} if there exists set of \(n\) linearly independent eigenvectors of \(\va\).
\end{definition}
\begin{lemma}
    \(\va\) is diagonalizable if and only if \(\matR_\va\) is diagonalizable over \(\C\). The set of eigenvectors of \(\va\) is exactly the set of eigenvectors of \(\matR_\va\).
\end{lemma}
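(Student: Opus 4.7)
The plan is to unpack the definitions, as the lemma reduces to essentially a tautology once the extension of $\cdot$ to complex second arguments is verified to preserve the identity $\va \cdot \vx = \matR_\va \vx$.

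First, I would verify that for $\va \in \R^n$ and $\vx \in \C^n$ we still have $\va \cdot \vx = \matR_\va \vx$. By the definition just given, $\va \cdot \vx = \va \cdot \rpart \vx + \si(\va \cdot \ipart \vx)$, which by the real identity equals $\matR_\va \rpart \vx + \si \matR_\va \ipart \vx = \matR_\va(\rpart \vx + \si \ipart \vx) = \matR_\va \vx$, where the second equality uses the $\R$-linearity of matrix-vector multiplication applied to the real matrix $\matR_\va$.

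Next, I would establish the equality of eigenvector sets. For any $\lambda \in \C$ and $\vx \in \C^n$, the equation $\va \cdot \vx = \lambda \vx$ is, by the previous paragraph, literally the same equation as $\matR_\va \vx = \lambda \vx$. Hence $\vx$ is an eigenvector of $\va$ with eigenvalue $\lambda$ if and only if $\vx$ is an eigenvector of $\matR_\va$ with eigenvalue $\lambda$. The two eigenvector sets (together with their associated eigenvalue structure) coincide.

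Finally, I would conclude the diagonalizability equivalence. By definition, $\va$ is diagonalizable iff there exist $n$ linearly independent eigenvectors of $\va$ in $\C^n$. By the previous step this is the same as the existence of $n$ linearly independent eigenvectors of $\matR_\va$ in $\C^n$, which is precisely the standard definition of $\matR_\va$ being diagonalizable over $\C$. There is no real obstacle here: the only subtlety worth flagging is the compatibility of the extended complex product with $\matR_\va$, which is immediate from $\R$-bilinearity, and everything else is bookkeeping against the definitions.
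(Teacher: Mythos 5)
Your proof is correct and is essentially the same as the paper's, which simply states ``Immediate from the definition of $\matR_\va$''; you have just unpacked the bookkeeping, in particular verifying that the complex extension of $\cdot$ still satisfies $\va \cdot \vx = \matR_\va \vx$, which the paper leaves implicit.
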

\begin{proof}
    Immediate from the definition of \(\matR_\va\).
\end{proof}

We are really interested in cases where all elements are diagonalizable, and that there is a set of \(n\) linearly independent vectors which are eigenvectors for all vectors in \(\R^n\).
\begin{definition}
    We say that $S\subseteq \R^n$ is {\em diagonalizable} if every \(\va\in S\) is diagonalizable. We say that the tubal ring is diagonalizable if $\R^n$ is diagonalizable.
    
    We say that the \(S \subseteq \R^n\) is {\em jointly diagonalizable} if there exists set of \(n\) linearly independent vectors which are eigenvectors for {\em all}  \(\va\in S\). We say that the tubal ring is jointly diagonalizable if $\R^n$ is jointly diagonalizable.
\end{definition}

\begin{lemma}
    $S \subseteq \R^n$ is jointly diagonalizable if and only if the set of matrices $\Rep(S)$ is jointly diagonalizable. 
\end{lemma}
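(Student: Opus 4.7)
The plan is to reduce this biconditional directly to the previously established pointwise correspondence between eigenvectors of $\va$ (in the tubal-ring sense) and eigenvectors of $\matR_\va$ (in the ordinary matrix sense), and then argue that joint diagonalizability is simply the existence of a common eigenbasis, which is preserved under this correspondence.

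First I would unpack both definitions. By definition, $S \subseteq \R^n$ is jointly diagonalizable if there exist $n$ linearly independent vectors $\vx_1, \ldots, \vx_n \in \C^n$ such that for every $\va \in S$, each $\vx_i$ is an eigenvector of $\va$, i.e., there is some scalar $\lambda_i(\va) \in \C$ with $\va \cdot \vx_i = \lambda_i(\va)\vx_i$. On the matrix side, $\Rep(S) \subseteq \C^{n \times n}$ is jointly diagonalizable in the standard sense if there exists a nonsingular $\matS \in \C^{n \times n}$ such that $\matS^{-1} \matR_\va \matS$ is diagonal for every $\matR_\va \in \Rep(S)$; equivalently, the columns of $\matS$ form $n$ linearly independent common eigenvectors of the matrices in $\Rep(S)$.

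Next I would invoke the lemma established immediately above: a vector $\vx \in \C^n$ is an eigenvector of $\va$ with eigenvalue $\lambda$ if and only if $\matR_\va \vx = \lambda \vx$, since $\va \cdot \vx = \matR_\va \vx$ by the extension of $\cdot$ to complex second arguments. This gives a bijection, at the level of individual vectors, between eigenvectors of $\va$ and eigenvectors of $\matR_\va$, preserving eigenvalues. Therefore a set $\{\vx_1, \ldots, \vx_n\}$ of linearly independent vectors is a common eigenbasis for all $\va \in S$ if and only if it is a common eigenbasis for all $\matR_\va \in \Rep(S)$. Putting the two directions together yields the claimed equivalence.

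The proof is essentially a direct translation, so I do not expect any real obstacle; the only thing to double-check is that the extension $\va \cdot \vb := \va \cdot \rpart\vb + \si(\va \cdot \ipart\vb)$ for complex $\vb$ really does give $\va \cdot \vb = \matR_\va \vb$, which is immediate from $\R$-linearity of $\vx \mapsto \matR_\va \vx$ and the fact that $\matR_\va$ has real entries. No further machinery (such as simultaneous diagonalizability criteria, e.g., Lemma~\ref{lem:joint-diag-commutative}) is needed at this stage; that lemma becomes relevant only in the subsequent argument that the tubal ring actually is jointly diagonalizable.
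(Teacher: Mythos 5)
Your proposal is correct and matches the paper's approach: the paper also treats this as an immediate consequence of the definition of representation matrices and the established pointwise eigenvector correspondence between $\va$ and $\matR_\va$. Your version simply spells out the unfolding of the two notions of joint diagonalizability in a bit more detail, which the paper leaves implicit.
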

\begin{proof}
    Again, immediate from the definition of representation matrices (Definition~\ref{def:rep-mat}).
\end{proof}
\begin{lemma}
\label{lem:diag-to-joint-diag}
    In a tubal ring, if a set of elements $S$ is diagonalizable then it is jointly diagonalizable.  
\end{lemma}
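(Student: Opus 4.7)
The plan is to reduce the claim to the matrix-algebraic Lemma~\ref{lem:joint-diag-commutative} (Bernstein, Fact 5.17.8), via the representation map $\va \mapsto \matR_\va$ that was developed in the preceding subsection. The essential observation is that $\R^n$ being a \emph{commutative} ring forces the matrices $\Rep(S) \subseteq \R^{n\times n}$ to pairwise commute, while the hypothesis of diagonalizability on $S$ transfers directly to diagonalizability of each $\matR_\va$ over $\C$. Commuting plus individually diagonalizable is exactly the hypothesis of the Bernstein lemma, and its conclusion (simultaneous diagonalization by a single change of basis) is exactly joint diagonalizability as we have defined it.

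More concretely, first I would observe that for any $\va,\vb \in S$ the ring axioms give $\va \cdot \vb = \vb \cdot \va$, and Lemma~\ref{lem:rep-linear} part 5 then yields
\[
\matR_\va \matR_\vb = \matR_{\va \cdot \vb} = \matR_{\vb \cdot \va} = \matR_\vb \matR_\va,
\]
so $\Rep(S)$ is a commuting family of matrices. Next, the assumption that $S$ is diagonalizable means, by the lemma identifying eigenvectors of $\va$ with eigenvectors of $\matR_\va$, that each $\matR_\va$ for $\va \in S$ is diagonalizable over $\C$.

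Now I would invoke Lemma~\ref{lem:joint-diag-commutative} applied to the set $\Rep(S)$: since every member is diagonalizable over $\C$ and the members pairwise commute, there exists a nonsingular $\matS \in \C^{n\times n}$ such that $\matS \matR_\va \matS^{-1}$ is diagonal for every $\va \in S$. Writing $\matS^{-1} = [\vx_1 \mid \cdots \mid \vx_n]$, this says exactly that each $\vx_i \in \C^n$ is an eigenvector of every $\matR_\va$ with $\va \in S$, and hence (by the identification between eigenvectors of $\va$ and of $\matR_\va$) a common eigenvector of every element of $S$. Since $\matS^{-1}$ is nonsingular, $\vx_1,\dots,\vx_n$ are linearly independent, so $S$ is jointly diagonalizable, as required.

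There is no real obstacle in this argument; the only point requiring a touch of care is that the Bernstein lemma is stated over $\C$ while the ring acts on $\R^n$, but this is precisely why the preceding subsection extended multiplication to complex operands via $\va \cdot \vb \coloneqq \va \cdot \rpart\vb + \si\,(\va \cdot \ipart \vb)$ and defined eigenvectors and diagonalizability in $\C^n$. With that convention in place, the translation between the tubal-ring language and the matrix language via $\Rep$ is a straightforward bookkeeping step.
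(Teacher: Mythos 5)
Your proof is correct and follows essentially the same route as the paper's: use commutativity of the tubal ring together with Lemma~\ref{lem:rep-linear}(5) to show the representation matrices $\Rep(S)$ pairwise commute, note each is diagonalizable over $\C$, and invoke Lemma~\ref{lem:joint-diag-commutative} to obtain a single diagonalizing similarity. You merely flesh out the translation back to common eigenvectors, which the paper leaves implicit.
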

\begin{proof}
    To show that $S$ is jointly diagonalizable we need to show that $\Rep(S)$ is jointly diagonalizable. Since $S$ is diagonalizable, every matrix in $\Rep(S)$ is diagonalizable. Since every matrix in $\Rep(S)$ is a representative matrix, and the ring is commutative, then for $\matA,\matB\in\Rep(S)$ we have $\matA\matB=\matB\matA$. Thus Lemma~\ref{lem:joint-diag-commutative} implies that there is a non-singular $\matS$ that diagonalizes all matrices in $\Rep(S)$, i.e., $\Rep(S)$ is jointly diagonalizable.
\end{proof}
\begin{lemma}
    \label{lem:span-diag-is-diag}
    If $S\subseteq \R^n$ is diagonalizable, then $\spn(S)$ is diagonalizable.
\end{lemma}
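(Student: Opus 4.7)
The plan is to leverage the representation map $\va \mapsto \matR_\va$ to transport the statement to the world of $n \times n$ matrices, where diagonalizability behaves linearly on commuting families. By Lemma~\ref{lem:rep-linear}, $\Rep$ is a linear map, so $\Rep(\spn(S)) = \spn(\Rep(S))$. Thus, showing $\spn(S)$ is diagonalizable amounts to showing that the linear span of the commuting, jointly diagonalizable family $\Rep(S)$ consists entirely of diagonalizable matrices.

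First, I would apply Lemma~\ref{lem:diag-to-joint-diag} to upgrade ``$S$ is diagonalizable'' to ``$S$ is jointly diagonalizable.'' This produces a single nonsingular $\matS \in \C^{n\times n}$ such that $\matS \matR_\va \matS^{-1}$ is diagonal for every $\va \in S$. Next, given any $\vb \in \spn(S)$, write $\vb = \sum_{i=1}^k \alpha_i \va_i$ with $\va_i \in S$ and $\alpha_i \in \R$. Using parts (2) and (3) of Lemma~\ref{lem:rep-linear},
\[
\matR_\vb = \sum_{i=1}^k \alpha_i \matR_{\va_i},
\]
so
\[
\matS \matR_\vb \matS^{-1} = \sum_{i=1}^k \alpha_i \matS \matR_{\va_i} \matS^{-1},
\]
which is a $\R$-linear combination of diagonal matrices, hence diagonal. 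Therefore $\matR_\vb$ is diagonalizable, and consequently $\vb$ is diagonalizable.

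Since $\vb \in \spn(S)$ was arbitrary, every element of $\spn(S)$ is diagonalizable, which is exactly the conclusion. There is no real obstacle here: the commutativity of the tubal ring (which hands commutativity to $\Rep(S)$) is what made joint diagonalization available in the first place via Lemma~\ref{lem:joint-diag-commutative}, and once a single simultaneous diagonalizer $\matS$ is fixed, linearity of the representation map does all the remaining work for free.
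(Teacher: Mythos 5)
Your proof is correct and follows essentially the same route as the paper's: upgrade $S$ to jointly diagonalizable via Lemma~\ref{lem:diag-to-joint-diag}, then use linearity of $\va \mapsto \matR_\va$ to see that conjugating $\matR_\vb$ by the common $\matS$ yields a linear combination of diagonal matrices, hence diagonal. No gap to report.
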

\begin{proof}
    $S$ is diagonalizable, so $\Rep(S)$ is jointly diagonalizable. Thus, there exists a $\matS$ such that for every $\va\in S$ we have \(
    \matS \matR_\va \matS^{-1} = \matLambda_\va
    \) for some diagonal matrix \(\matLambda_\va\). Consider a finite linear combination of elements in $S$: $\vb = \sum^N_{j=1}\alpha_j \vb_j\in\spn(S)$. Then, $\matR_\vb = \sum^N_{j=1}\alpha_j \matR_{\vb_j}$. Now, 
    \[
    \matS \matR_\vb \matS^{-1}= \sum^N_{j=1}\alpha_j \matS \matR_{\vb_j}\matS^{-1}= \sum^N_{j=1}\alpha_j \matLambda_{\vb_j} 
    \]
    is diagonal, so $\vb$ is diagonalizable. We have shown that any finite linear combination is diagonalizable, so $\spn(S)$ is diagonalizable. 
\end{proof}

We now come to the main result of this subsection. It shows that we only need to show that the tubal ring is diagonalizable.
\begin{proposition}
    \label{prop:diag-is-Mprod}
    A tubal ring is diagonalizable if and only if there exists an invertible \(\matM \in \C^{\times n}\) such that for all \(\va,\vb\in\R^n\),
    \[
        \va \cdot \vb = \matM^{-1}(\matM \va \hadprod \matM \vb)
    \]
\end{proposition}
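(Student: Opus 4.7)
The $(\Leftarrow)$ direction is essentially a restatement of Example~\ref{example:vetmat.identification.mprod}: if $\va\cdot\vb=\matM^{-1}(\matM\va\hadprod\matM\vb)$ then $\matR_\va=\matM^{-1}\diag(\matM\va)\matM$ is diagonalized by the single matrix $\matM$ for every $\va$, so every element of the ring is diagonalizable.

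For the $(\Rightarrow)$ direction, my plan is to manufacture $\matM$ directly from the common eigenstructure of the representation matrices. First I would invoke Lemma~\ref{lem:diag-to-joint-diag} (applied to $\Rep(\R^n)$, every element of which is diagonalizable by hypothesis) to obtain an invertible $\matS\in\C^{n\times n}$ such that $\matS\matR_\va\matS^{-1}=\diag(\chi_1(\va),\dots,\chi_n(\va))$ for every $\va\in\R^n$. By Lemma~\ref{lem:rep-linear} each $\chi_i$ is $\R$-linear, and because $\matR_{\va\cdot\vb}=\matR_\va\matR_\vb$ together with $\matR_\ve=\matI_n$, each $\chi_i:\R^n\to\C$ is in fact a unital ring homomorphism. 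Next, I would assemble the characters into a matrix $\matT\in\C^{n\times n}$ whose $i$-th row is $\chi_i$, so that $\matT\va=(\chi_1(\va),\dots,\chi_n(\va))^{\T}$. A short computation using multiplicativity of each $\chi_i$ gives $\matT\matR_\va\vb=\diag(\matT\va)\matT\vb$ for all $\vb\in\R^n$, and extending by $\C$-linearity in $\vb$ yields the matrix identity $\matT\matR_\va=\diag(\matT\va)\matT$. Provided $\matT$ is invertible, taking $\matM:=\matT$ gives $\va\cdot\vb=\matR_\va\vb=\matT^{-1}(\matT\va\hadprod\matT\vb)$, as desired.

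The main obstacle is therefore establishing that $\matT$ is invertible over $\C$. Real injectivity of $\matT$ is immediate: $\matT\va=0$ forces $\diag(\matT\va)=0$, hence $\matR_\va=0$, and the earlier lemma that no nonzero tube has zero representation matrix then forces $\va=0$. However, injectivity on $\R^n$ does not automatically imply invertibility over $\C$ (consider a rank-deficient complex matrix whose kernel avoids $\R^n$), so I need to exploit the ring structure. My plan is to argue as follows: being commutative, unital, and von Neumann regular makes a finite-dimensional $\R$-algebra semisimple; Artin--Wedderburn combined with Frobenius's classification (Theorem~\ref{thm:fro}) then forces it to be $\R$-algebra isomorphic to $\R^a\times\C^b$ for some $a+2b=n$. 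Under this isomorphism the unital ring homomorphisms $R\to\C$ are exactly the projections onto the simple factors composed with embeddings of each factor into $\C$, contributing one homomorphism per $\R$-factor and two (identity and complex conjugation) per $\C$-factor, for a total of exactly $n$ pairwise distinct homomorphisms. Since our $\chi_1,\dots,\chi_n$ are $n$ such homomorphisms and at most $n$ distinct ones exist, they must constitute all of them and in particular be pairwise distinct. Dedekind's independence of characters then promotes distinctness to $\C$-linear independence of the rows of $\matT$, establishing invertibility and completing the construction.
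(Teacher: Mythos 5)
Your overall strategy coincides with the paper's up to the point of establishing invertibility of $\matM$: both apply Lemma~\ref{lem:diag-to-joint-diag}, read off the candidate matrix from the diagonal entries of $\matS\matR_\va\matS^{-1}$ (your $\matT$ is precisely the paper's $\matM$, since the paper defines it by $\matM\va=\matS\matR_\va\matS^{-1}1_{n\times 1}$, whose $i$-th coordinate is $\chi_i(\va)$), and your derivation of $\matT\matR_\va=\diag(\matT\va)\matT$ and the resulting product formula are correct. You also correctly flag that injectivity of $\matT$ on $\R^n$ does not by itself give invertibility over $\C$. The genuine gap is in the distinctness step: you claim that because $\chi_1,\dots,\chi_n$ are $n$ unital $\R$-algebra homomorphisms from the tubal ring to $\C$ and at most $n$ distinct such homomorphisms exist, ``they must constitute all of them and in particular be pairwise distinct.'' This is a non sequitur --- listing $n$ objects drawn with possible repetition from a pool of size at most $n$ does not make them pairwise distinct, and nothing you have said rules out $\chi_i=\chi_j$ for some $i\neq j$. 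To repair it along your route you would still need a separate argument, e.g.\ that the complexified map $\va\mapsto(\chi_1(\va),\dots,\chi_n(\va))$ is injective on the complexification of the ring (by faithfulness of the representation) and hence an isomorphism onto $\C^n$ by dimension count, so that the $\chi_i$ are the $n$ distinct coordinate projections; the bare counting argument does not deliver this.

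The paper sidesteps the entire Artin--Wedderburn/Frobenius/Dedekind apparatus with a one-line trick using the ring unit: writing $\vbeta=\matS\ve$ with $\ve$ the multiplicative unit, the identity $\vx=\vx\cdot\ve=\matS^{-1}\diag(\vbeta)\matM\vx$ for every $\vx\in\R^n$ yields the matrix equation $\matS^{-1}\diag(\vbeta)\matM=\matI_n$, which immediately forces $\matM$ to have full rank (hence be invertible), forces $\vbeta$ to have no zero entries, and shows $\matS=\diag(\vbeta)\matM$, from which $\va\cdot\vb=\matS^{-1}(\matM\va\hadprod\matS\vb)=\matM^{-1}(\matM\va\hadprod\matM\vb)$ follows. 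You should adopt this elementary argument in place of the semisimplicity route, whose key step is currently unjustified.
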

\begin{proof}
    Due Lemma~\ref{lem:diag-to-joint-diag} there exists a matrix \(\matS\in\C^{n\times n}\) that diagonalizes \(\matR_\va\) for all \(\va\in\R^n\). Thus, for every \(\va\) there exists a diagonal matrix \(\matLambda_\va\) such that
    \[
    \matS \matR_\va \matS^{-1} = \matLambda_\va
    \]
    Consider the map \(\va \mapsto \matS \matR_\va \matS^{-1} 1_{n\times 1}\). It is easy to see that this map is linear (this is due to the fact that the map \(\va \mapsto \matR_\va\) is linear), so there exist a matrix \(\matM\in\C^{n\times n}\) that implements it. That is \(\matM \va = \matS \matR_\va \matS^{-1} 1_{n\times 1}\) for all \(\va\). This, in turn, implies that \(\matLambda_\va = \diag(\matM \va)\) and \(\matR_\va = \matS^{-1} \diag(\matM \va) \matS\) for all \(\va\). Now, for all \(\va,\vb\): 
    \[
        \va \cdot \vb = \matR_\va \vb = \matS^{-1} \diag(\matM \va) \matS \vb =\matS^{-1}(\matM \va \hadprod \matS\vb)
    \]

    Let \(\ve\) denote the unit element in the ring, and set \(\vbeta = \matS \ve\). For every \(\vx\):
    \begin{align*}
        \vx &= \vx \cdot \ve \\
            &= \matS^{-1}(\matM \vx \hadprod \matS \ve) \\
            &= \matS^{-1}(\matS \ve \hadprod \matM \vx) \\
            &= \matS^{-1} \diag(\vbeta) \matM\vx 
    \end{align*}
    Since this holds for all \(\vx\) we conclude that \(\matS^{-1} \diag(\vbeta) \matM = \matI_n\). Thus, all elements in \(\vbeta\) must be non-zero (for otherwise the left hand side is rank deficient), and \(\matS = \diag(\vbeta) \matM\). That is, \(\matS\) is a row-rescaling of \(\matM\). However, close inspection of the formula \(\va \cdot \vb =\matS^{-1}(\matM \va \hadprod \matS\vb)\) reveals that it still holds if we rescale the rows of \(\matS\) with non-zero factors. Thus, we conclude that \(\va \cdot \vb = \matM^{-1}(\matM \va \hadprod \matM \vb)\) for all \(\va,\vb\).

    The other direction is immediate.
\end{proof}

\subsection{Sub-rings, Ideals and Idempotent Elements}

Our technique for proving diagonalizability is to decompose the ring into a direct sum of sub-rings, and then show each one of them is diagonalizable. Then, the full tubal ring is diagonalizable since it taking the union of the sub-rings spans all of $\R^n$. The decomposition is based on John von Neumann's theory of regular rings~\cite{JvN36}. Some of the results are special cases of results therein, but we include a proof to make the text self-contained as possible.

\begin{definition}
    Let $V\subseteq \R^n$ be a linear subspace. We say that \((V, +, \cdot)\) is a {\em tubal sub-ring} if it is commutative, unital, von Neumann regular ring, which is also an associative algebra over \(\R\), and $V$ is an ideal in \((\R^n,+,\cdot)\)\footnote{This means that if $\vx\in V$ and $\va \in \R^n$ then \(\va\cdot\vx\in V\).}. The dimension of the sub-ring is $\dim(V)$.
\end{definition}

One way to construct a tubal sub-ring is via the principal ideal associated with (idempotent) elements. It is, in fact, the only way.

\begin{definition}
    For $\va\in\R^n$, the {\em principal ideal associated with $\va$} is:
    \[
    (\va) \coloneqq \left\{\vx \cdot \va\,:\,\vx\in \R^n \right\}
    \]
\end{definition}
\begin{lemma}
    \(((\va), +, \cdot)\) is a tubal sub-ring.
\end{lemma}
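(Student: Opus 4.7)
The plan is to verify each defining property of a tubal sub-ring in turn, with the two main algebraic contents (unitality and von Neumann regularity inside $(\va)$) both driven by the \emph{local identity} $\vu \coloneqq \va \cdot \va^\winv$, where $\va^\winv$ is the weak inverse of $\va$ in the ambient tubal ring.

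First, the easy structural facts. That $(\va)$ is an $\R$-linear subspace follows from bilinearity of $\cdot$: if $\vy_i = \vx_i \cdot \va$ then $\alpha \vy_1 + \beta \vy_2 = (\alpha \vx_1 + \beta \vx_2) \cdot \va \in (\va)$. That $(\va)$ is an ideal of $(\R^n, +, \cdot)$ is immediate from associativity: $\vw \cdot (\vx \cdot \va) = (\vw \cdot \vx) \cdot \va$. Closure under $\cdot$ then also follows (using commutativity and associativity), since $(\vx_1 \cdot \va)\cdot(\vx_2\cdot\va) = ((\vx_1\cdot\vx_2\cdot\va))\cdot\va$. Commutativity, associativity, distributivity, and the associative $\R$-algebra identities all restrict from $\R^n$ to the subspace $(\va)$ without any further work.

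Next, unitality. The identity element of $(\va)$ is \emph{not} the ring identity $\ve$ in general, but rather $\vu \coloneqq \va \cdot \va^\winv$, which clearly lies in $(\va)$. I would first check that $\vu$ is idempotent using the weak inverse identity $\va \cdot \va^\winv \cdot \va = \va$:
\[
\vu \cdot \vu = (\va \cdot \va^\winv)\cdot(\va \cdot \va^\winv) = (\va \cdot \va^\winv \cdot \va) \cdot \va^\winv = \va \cdot \va^\winv = \vu.
\]
Then for any $\vz = \vx\cdot\va \in (\va)$,
\[
\vu \cdot \vz = (\va \cdot \va^\winv)\cdot(\vx\cdot\va) = \vx \cdot (\va \cdot \va^\winv \cdot \va) = \vx \cdot \va = \vz,
\]
so $\vu$ is a (two-sided, by commutativity) multiplicative identity on $(\va)$.

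Finally, von Neumann regularity. For $\vy \in (\va)$, the ambient ring furnishes some $\vy'\in\R^n$ with $\vy\cdot\vy'\cdot\vy = \vy$, but $\vy'$ need not lie in $(\va)$. The fix is to project onto the ideal via the local identity: set $\tilde\vy' \coloneqq \vy' \cdot \vu = (\vy'\cdot\va^\winv)\cdot\va \in (\va)$. Since $\vy \in (\va)$ and $\vu$ acts as identity there, $\vu \cdot \vy = \vy$, hence
\[
\vy \cdot \tilde\vy' \cdot \vy = \vy \cdot \vy' \cdot \vu \cdot \vy = \vy\cdot\vy'\cdot\vy = \vy,
\]
which shows $(\va)$ is von Neumann regular. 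The only mildly subtle step is the unitality construction --- the natural first guess ``restrict $\ve$'' fails, and one has to realize that the weak inverse machinery of the ambient ring is exactly what produces an internal identity for every principal ideal. Everything else is routine bookkeeping with associativity and commutativity.
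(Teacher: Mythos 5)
Your proof is correct and follows essentially the same route as the paper: you establish the subspace/ideal/closure properties by routine bookkeeping and, crucially, you identify the local unit $\vu = \va \cdot \va^\winv$ and verify its idempotence and identity action on $(\va)$ exactly as the paper does. The one place where you diverge is the von Neumann regularity step: the paper observes directly that $\vx^\winv = \vx^\winv \cdot \vx \cdot \vx^\winv$ already places the ambient weak inverse inside the ideal $(\va)$ (because $(\va)$ absorbs products with $\vx\in(\va)$), whereas you project the ambient weak inverse onto $(\va)$ by multiplying by $\vu$ and check the single regularity identity for the projected element. Both are valid; the paper's version is a one-line argument and additionally shows that the weak inverse of the ambient ring restricts to the weak inverse of the sub-ring, which you would get too if you also verified $\tilde{\vy}' \cdot \vy \cdot \tilde{\vy}' = \tilde{\vy}'$, but that extra check is not needed for the lemma as stated.
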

\begin{proof}
    We start with linearity. Suppose $\vx,\vy\in (\va)$ and $\alpha,\beta\in\R$. We can write $\vx = \vx' \cdot \va$ and $\vy = \vy' \cdot \va$ for some $\vx',\vy'\in\R^n$. Then, $\alpha\vx + \beta\vy = (\alpha\vx' + \beta\vy')\cdot\va$ so in $(\va)$, where we used both distributivity of $\cdot$ with respect to $+$, and the fact that $(\R^n,+,\cdot)$ is an associative algebra. So $(\va)$ is a linear subspace. It remains to show that $(\va)$ is closed under $\cdot$ to establish it is a ring. This follows immediately from $\cdot$ being commutative. 

    We show that it is unital by identifying a unit element. Let $\ve_\va \coloneqq \va \cdot \va^\winv$. Notice that $\ve_\va \cdot \va = \va \cdot \va^\winv \cdot \va = \va$. We now show that for every $\vx \in (\va)$ we have $\ve_\va \cdot \vx = \vx$. Indeed, if $\vx \in (\va)$ we can write $\vx = \vx' \cdot \va$ for some $\vx'$. Now, $\ve_\va \cdot \vx = \ve_\va \cdot \vx' \cdot \va = \ve_\va \cdot \va \cdot \vx' = \va \cdot \vx' = \vx$. 

    Finally, we show that or every $\vx \in (\va)$ we also have $\vx^\winv \in (\va)$. Recall that $\vx^\winv = \vx^\winv \cdot \vx \cdot \vx^\winv$. Since $(\va)$ is an ideal, products on the left and right, even with elements not necessarily from $(\va)$, must be in $(\va)$, so $\vx^\winv\in (\va)$.
\end{proof}

\begin{lemma}
    If \((V, +, \cdot)\) is a tubal sub-ring, then there is a unique idempotent \(\ve_V \in \R^n\) such that $V=(\ve_V)$. 
\end{lemma}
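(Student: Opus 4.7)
The plan is to let $\ve_V$ be the multiplicative identity of the ring $(V, +, \cdot)$, which exists by the unitality hypothesis, and to show it has all the required properties. First I would observe that $\ve_V$ is automatically idempotent as an element of the ambient ring $(\R^n,+,\cdot)$: the multiplication on $V$ is just the restriction of $\cdot$, so $\ve_V \cdot \ve_V = \ve_V$ holds verbatim in $\R^n$.

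Next I would establish the two inclusions $V = (\ve_V)$. For the inclusion $V \subseteq (\ve_V)$, any $\vx \in V$ satisfies $\vx = \ve_V \cdot \vx$ by definition of the unit of $V$, and hence $\vx \in (\ve_V)$. For the reverse inclusion $(\ve_V) \subseteq V$, take a generator $\va \cdot \ve_V \in (\ve_V)$ with $\va \in \R^n$; since $\ve_V \in V$ and $V$ is an ideal in $(\R^n,+,\cdot)$, the product $\va \cdot \ve_V$ lies in $V$.

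For uniqueness, suppose $\ve$ and $\ve'$ are idempotents with $(\ve) = (\ve') = V$. The key observation is that for any $\vx \in (\ve)$, writing $\vx = \vy \cdot \ve$, commutativity and idempotency give $\ve \cdot \vx = \ve \cdot \vy \cdot \ve = \vy \cdot \ve = \vx$, so $\ve$ is a multiplicative identity of $V$; the same argument applies to $\ve'$. Since the multiplicative identity of the ring $V$ is unique, $\ve = \ve'$.

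I do not expect any real obstacle here: the statement is essentially the standard fact that a unital ideal in a commutative ring is generated by a (unique) idempotent, and the only thing to be careful about is distinguishing the ambient ring multiplication from the ring structure on $V$ — but since the former restricts to the latter, all axioms pass through transparently. The von Neumann regularity hypothesis on $V$ is not needed for this particular lemma; it is the unital-ideal structure that does all the work.
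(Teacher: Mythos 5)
Your proof is correct and follows essentially the same approach as the paper: take $\ve_V$ to be the unit of $V$, note it is idempotent, and prove the two inclusions using unitality and the ideal property respectively. Your explicit uniqueness argument (showing any idempotent generator must itself be a unit of $V$, hence equals $\ve_V$) is a welcome addition — the paper leans on the uniqueness of the unit element without spelling out why an arbitrary idempotent generator must coincide with it — and your observation that von Neumann regularity is not used here is also correct.
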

\begin{proof}
    Since the sub-ring is unital, it has a unique unit element $\ve_V$. That element must be idempotent, since it being the unit we must have $\ve_V \cdot \ve_V = \ve_V$. Now, for every $\va\in V$ we have \(\va \cdot \ve_V = \va\) so $V \subseteq (\ve_V)$. For the other direction, if $\va \in (\ve_V)$, we have that $\va = \vx \cdot \ve_V$ for some $\vx$, but by assumption $V$ is an ideal, so $\va\in V$ since $\ve_V\in V$. 
\end{proof}

\if0
\begin{lemma}
    For every $(\va)$ there exists a unique idempotent \(\ve_\va \in (\va)\) such that $(\va)=(\ve_\va)$. Furthermore, $\ve_\va = \va \cdot \va^{\winv}$.
\end{lemma}
\begin{proof}
\TODO{Proof needs to be twicked since the unit element is given by the proof of the previous lemma.}
    Since $(\va)$ is unital, it has a unique unit element $\ve_\va\in (\va)$. It is idempotent since being the unit element we have $\ve_\va \cdot \ve_\va = \ve_\va$. Now, for every $\vx \in (\va)$ we have $\vx = \ve_\va\cdot\vx\in (\ve_\va)$, so $(\va)\subseteq (\ve_\va)$. For the other direction, since $\ve_\va \in (\va)$ there exists a $\vy$ such that $\ve_\va = \va\cdot\vy$. So, if $\vx \in (\ve_\va)$ there exists a $\vz$ such that $\vx = \ve_\va \cdot \vz = \va\cdot\vy\cdot\vz$, and we find that $\vx \in (\va)$, so $(\ve_\va)\subseteq (\va)$.

    To show that $\ve_\va = \va \cdot \va^\winv$, we first note that clear $\va \cdot \va^\winv \in (\va)$, so we only need to show that it is a unit element in $(\va)$, which must be unique. Indeed, for every $\vx \in (\va)$ there is a $\vy$ such that $\vx = \va \cdot \vy$. Now, $(\va \cdot \va^\winv) \cdot \vx = \va \cdot \va^\winv \cdot \va \cdot \vy = \va \cdot \vy = \vx$ where we used the fact that $\va \cdot \va^\winv \cdot \va = \va$.
\end{proof}
\fi

Given a tubal sub-ring $V$, and an idempotent element $\vp \in V$, we can decompose $V$ into a direct sum of two principal ideals. If $\vp$ is non-trivial, each one of these will have a dimensional smaller than $V$'s.
\begin{lemma}
\label{lem:decompose-from-idem}
    Suppose $(V,+,\cdot)$ is a tubal sub-ring, and denote by $\ve_V$ its unique unit element. Then for every idempotent $\vp \in V$ we have 
    \[
    V = (\vp)\oplus (\ve_V - \vp)
    \]
\end{lemma}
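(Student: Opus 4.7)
The plan is to exploit the fact that the pair $\vp$ and $\ve_V - \vp$ behaves like a pair of \emph{complementary orthogonal idempotents}: both are idempotent, they sum to $\ve_V$, and their product vanishes. Once this is established, a direct-sum decomposition of $V$ follows from splitting each $\vx\in V$ as $\vx = \vx\cdot\vp + \vx\cdot(\ve_V-\vp)$.

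First I would record the elementary ring identities that drive the proof. Since $V$ is a commutative unital ring with unit $\ve_V$ and $\vp\in V$ is idempotent, a short computation using commutativity and distributivity gives
\begin{equation*}
(\ve_V-\vp)\cdot(\ve_V-\vp) = \ve_V - \vp - \vp + \vp\cdot\vp = \ve_V - \vp,
\end{equation*}
so $\ve_V - \vp$ is itself an idempotent of $V$, and
\begin{equation*}
\vp\cdot(\ve_V-\vp) = \vp - \vp\cdot\vp = \vzero_n .
\end{equation*}
Note that $\ve_V-\vp$ lies in $V$ because $V$ is closed under $+$ and contains both $\vp$ and $\ve_V$, so the principal ideal $(\ve_V-\vp)$ is well-defined inside $V$.

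To show $V = (\vp) + (\ve_V-\vp)$, I would take any $\vx\in V$ and write
\begin{equation*}
\vx = \vx\cdot\ve_V = \vx\cdot\vp + \vx\cdot(\ve_V-\vp),
\end{equation*}
using the unit property and distributivity in $V$. The first summand lies in $(\vp)$ and the second in $(\ve_V-\vp)$ by the definition of principal ideals, so the sum is all of $V$.

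For the directness of the sum, I would take $\vx\in(\vp)\cap(\ve_V-\vp)$ and write $\vx = \vy\cdot\vp = \vz\cdot(\ve_V-\vp)$ for some $\vy,\vz\in\R^n$. Multiplying by $\vp$ and using $\vp\cdot\vp=\vp$ on one side and $\vp\cdot(\ve_V-\vp)=\vzero_n$ on the other gives
\begin{equation*}
\vx\cdot\vp = \vy\cdot\vp\cdot\vp = \vy\cdot\vp = \vx \qquad \text{and}\qquad \vx\cdot\vp = \vz\cdot(\ve_V-\vp)\cdot\vp = \vzero_n,
\end{equation*}
hence $\vx=\vzero_n$. This forces $(\vp)\cap(\ve_V-\vp)=\{\vzero_n\}$, completing the direct-sum claim.

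The proof is essentially bookkeeping, so there is no real obstacle; the only point worth double-checking is that the principal ideals $(\vp)$ and $(\ve_V-\vp)$, a priori defined inside the ambient ring $(\R^n,+,\cdot)$, actually coincide with the corresponding principal ideals computed inside $V$. This is immediate because $V$ is itself an ideal of $(\R^n,+,\cdot)$, so $\vx\cdot\vp\in V$ for every $\vx\in\R^n$, and in particular every element of $(\vp)$ is already in $V$; the analogous statement holds for $(\ve_V-\vp)$.
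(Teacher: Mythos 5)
Your proof is correct and follows essentially the same route as the paper's: establish that $\vp$ and $\ve_V-\vp$ are complementary orthogonal idempotents, split any $\vx\in V$ as $\vx\cdot\vp+\vx\cdot(\ve_V-\vp)$ for the sum, multiply by $\vp$ to kill the intersection, and invoke the ideal property of $V$ to get the reverse containment. The only cosmetic difference is that the paper presents the intersection argument first and the spanning argument second, while you reverse the order; the content is identical.
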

\begin{proof}
    We first show that $(\vp) \cap (\ve_V - \vp) = \{0_{n}\}$. Suppose that $\vx \in (\vp) \cap (\ve_V - \vp)$. Since $\vx \in (\vp)$ and $\vp$ is idempotent, we have $\vp \cdot \vx = \vx$. The is because we can write $\vx = \vp \cdot \vy$ for some $\vy$, and then $\vp \cdot \vx = \vp \cdot \vp \cdot \vy = \vp \cdot \vy = \vx$. Likewise, since $\ve_V - \vp$ is idempotent as well, $(\ve_V - \vp)\cdot\vx = \vx$. However, $(\ve_V - \vp)\cdot\vx=(\ve_V - \vp)\cdot\vp\cdot\vx=(\ve_V \cdot \vp -\vp \cdot \vp)\cdot\vx = \vzero_n\cdot\vx = \vzero_n$. So $\vx=\vzero_n$.
    
    Suppose $\vx \in V$. Then $\vx = \vp \cdot \vx + (\ve_V - \vp)\cdot\vx$, so $\vx \in (\vp)\oplus (\ve_V - \vp)$. This shows that $V \subseteq (\vp)\oplus (\ve_V - \vp)$. On the other-hand, if $\vx \in (\vp)\oplus (\ve_V - \vp)$ then there exist \(\vy,\vz\in\R^n\) such that $\vx=\vp\cdot\vy + (\ve_V - \vp)\cdot\vz$. Now, since $V$ is an ideal in $(\R^n,+,\cdot)$, and $\vp,\ve_V-\vp\in V$, then both summands are in $V$. Since $V$ is a linear subspace, this implies that $\vx\in V$. So, $(\vp)\oplus (\ve_V - \vp)\subseteq V$. Together, $V=(\vp)\oplus (\ve_V - \vp)$.
\end{proof}

\subsection{Decomposing to a Direct Sum Tubal Sub-Fields}

Let \(V\) be a tubal sub-ring (possibly \(\R^n\)). It has two trivial idempotent elements: \(\vzero_n\) and \(\ve_V\). However, for both these trivial idempotent elements the decomposition given by Lemma~\ref{lem:decompose-from-idem} is useless, since one of the principal ideals is $V$ while the other is the trivial subspace $\{\vzero_n\}$. However, if we can find a non-trivial idempotent $\vp\in V$, then both principal ideals $(\vp)$ and $(\ve_V - \vp)$ are proper subsets of $V$, and so have lower dimension.

Recall that a ring is called a \emph{division ring} if every non-zero element has a multiplicative inverse. We now show that if a tubal ring $V$ is {\em not} a division ring, then it has at least one non-trivial idempotent element. 
\begin{lemma}
    \label{lem:not-field-nontrivial-idem}
    Suppose a tubal sub-ring $V$ is not a division ring. Then, there exist a non-trivial idempotent element in $V$.
\end{lemma}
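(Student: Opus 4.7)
The plan is to manufacture a non-trivial idempotent directly from an element of $V$ that witnesses failure of the division-ring property. Since $V$ is not a division ring, pick some $\va\in V$ with $\va\neq\vzero_n$ that has no multiplicative inverse in $V$. Because $V$ is itself commutative, unital, and von Neumann regular, the (Moore--Penrose-style) weak inverse of $\va$ computed \emph{inside} $V$ exists and is unique; the same uniqueness property applied in the ambient ring $(\R^n,+,\cdot)$ then forces this inner weak inverse to coincide with $\va^\winv$, so in particular $\va^\winv\in V$.

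Set $\vp\coloneqq \va\cdot\va^\winv$, which clearly belongs to $V$. I would verify that $\vp$ is idempotent by a short computation using commutativity and $\va\cdot\va^\winv\cdot\va=\va$:
\[
\vp\cdot\vp \;=\; \va\cdot\va^\winv\cdot\va\cdot\va^\winv \;=\; (\va\cdot\va^\winv\cdot\va)\cdot\va^\winv \;=\; \va\cdot\va^\winv \;=\; \vp.
\]

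The remaining step is to exclude both trivial values. If $\vp=\vzero_n$, then $\va=\va\cdot\va^\winv\cdot\va=\vp\cdot\va=\vzero_n$, contradicting the choice of $\va$. If $\vp=\ve_V$, then $\va\cdot\va^\winv=\ve_V$ exhibits $\va^\winv$ as a multiplicative inverse of $\va$ in $V$, contradicting the non-invertibility of $\va$. Hence $\vp$ is idempotent with $\vp\notin\{\vzero_n,\ve_V\}$, as required.

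The only subtle point in this plan is the one addressed in the first paragraph: making sure the weak inverse used is actually an element of $V$ rather than merely of the ambient ring $\R^n$. Once the uniqueness of weak inverses in commutative von Neumann regular rings (already invoked earlier in the paper) is cited, the rest of the argument is a couple of lines of algebra; there is no combinatorial or spectral obstacle to overcome.
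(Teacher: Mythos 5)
Your proposal is correct and follows essentially the same route as the paper: choose a non-zero, non-invertible $\va\in V$, form $\vp=\va\cdot\va^\winv$, and verify that $\vp$ is a non-trivial idempotent in $V$. You are somewhat more explicit than the paper in justifying that $\va^\winv\in V$ (via uniqueness of the weak inverse in commutative von~Neumann regular rings) and in writing out the idempotency computation, but the underlying argument is the same.
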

\begin{proof}
    Since $V$ is not a division ring, it has a non-zero element $\va\in V$ which does not have a multiplicative inverse in the ring. Consider $\vp = \va \cdot \va^{\winv}$. As shown earlier, it is idempotent. However, since $\va$ does not have a multiplicative inverse in the ring, then $\va \cdot \va^{\winv} \neq \ve_V$. The identity $\va = \va \cdot \va^{\winv} \cdot \va = \vp \cdot \va$ ensures that $\vp$ is non-zero, so it is a non-trivial idempotent element in $V$. 
\end{proof}

A unital, commutative, division ring is a field, so we call a tubal sub-ring that is also a division ring a {\em tubal sub-field}. An immediate corollary of the previous lemma is that a tubal sub-ring can be decomposed as a sum of tubal sub-fields.
\begin{corollary}
    \label{cor:full-decompose-to-ideals}
    Let $V$ be a tubal sub-ring. There exists idempotent elements $\vp_1,\dots,\vp_N \in \R^n$, for a finite $N$, such that 
    \[
    V = (\vp_1)\oplus (\vp_2) \oplus \cdots \oplus (\vp_N)
    \]
    and \((\vp_j)\) are tubal sub-fields for all $j=1,\dots,N$. 
\end{corollary}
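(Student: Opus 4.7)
The plan is to prove the corollary by strong induction on the dimension $d = \dim(V)$, using the two preceding lemmas as the main tools: Lemma~\ref{lem:not-field-nontrivial-idem} supplies a non-trivial idempotent whenever $V$ fails to be a division ring, and Lemma~\ref{lem:decompose-from-idem} converts such an idempotent into an actual direct-sum splitting of $V$.

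For the base case $d=1$, the sub-ring $V$ is spanned by its unit $\ve_V$, so every element has the form $\alpha\ve_V$ with $\alpha \in \R$. Because $V$ is an $\R$-algebra, $(\alpha\ve_V)\cdot(\beta\ve_V)=\alpha\beta\,\ve_V$, so $V$ is ring-isomorphic to $\R$, hence a field; taking $N=1$ and $\vp_1=\ve_V$ finishes this case. For the inductive step, assume the statement holds for every tubal sub-ring of dimension less than $d$, and let $V$ have dimension $d$. If $V$ is already a division ring we are again done with $N=1$ and $\vp_1=\ve_V$. Otherwise, Lemma~\ref{lem:not-field-nontrivial-idem} yields a non-trivial idempotent $\vp\in V$, and Lemma~\ref{lem:decompose-from-idem} gives
\[
V \;=\; (\vp)\,\oplus\,(\ve_V-\vp).
\]
Since $\vp$ is non-trivial, both summands are proper non-zero subspaces, so each has dimension strictly less than $d$. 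Each is also a principal ideal in $\R^n$ generated by an element of $\R^n$, hence a tubal sub-ring in its own right (by the principal-ideal lemma proven earlier). Applying the induction hypothesis to each summand and concatenating the resulting idempotent lists produces the desired decomposition $V=(\vp_1)\oplus\cdots\oplus(\vp_N)$ into tubal sub-fields.

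The main obstacle I expect is bookkeeping rather than mathematics: one must verify that every $(\vp_j)$ produced at the leaves of the recursion is genuinely a tubal sub-field and that the entire construction bottoms out in finitely many steps. Termination is guaranteed by the strict dimension decrease at each split (at most $d$ levels of recursion), and the leaves are automatically tubal sub-fields because the recursion only stops on a summand when it \emph{is} a division ring — any non-field summand admits a further non-trivial idempotent and gets split again. I would also note in passing that the unit of each principal ideal $(\vp_j)$ is indeed $\vp_j$ itself, which is why the $\vp_j$'s appear as the idempotent generators in the final statement.
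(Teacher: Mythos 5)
Your proof is correct and follows essentially the same route as the paper: induction on $\dim(V)$, handling the base case $\dim(V)=1$ directly, and in the inductive step using Lemma~\ref{lem:not-field-nontrivial-idem} to extract a non-trivial idempotent when $V$ is not a division ring, then Lemma~\ref{lem:decompose-from-idem} to split $V=(\vp)\oplus(\ve_V-\vp)$ into lower-dimensional tubal sub-rings to which the induction hypothesis applies. The bookkeeping points you flag (termination via strict dimension drop, and that each $(\vp_j)$ is a tubal sub-ring because principal ideals of idempotents always are) are handled the same way in the paper.
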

\begin{proof}
    By induction on the dimension of $V$. If $V$ has dimension 1, it is not only the principal ideal of $\ve_V$, but also spanned by $\ve_V$, i.e., every element is of the form $\alpha \ve_V$ for scalar $\alpha\in\R$. The element is $\vzero_n$ if $\alpha=0$, in which case it doesn't need to have a multiplicative inverse for $V=(\ve_V)$ to be a tubal sub-field. If $\alpha\neq 0$, then the multiplicative inverse of $\alpha\ve_V$ inside $V$ is $\alpha^{-1} \ve_V$. We see that every element has a multiplicative inverse, so $V=(\ve_V)$ is a tubal sub-field. 

    For a general $n=\dim(V)$, if $V=(\ve_V)$ is a tubal sub-field we are done. Otherwise, by Lemma~\ref{lem:not-field-nontrivial-idem} it has a non-trivial idempotent $(\vp)$, and by Lemma~\ref{lem:decompose-from-idem} we can decompose $V=(\vp)\oplus(\ve_V - \vp)$. Each of the two principal ideals in the direct sum is a non-trivial subspace of $V$, so both have dimension smaller than $n$, and by induction they have a decomposition.
\end{proof}

\subsection{Diagonalizability of Tubal Sub-Fields}
We now recall the Frobenius theorem on possible dimensions of associative division algebras (Theorem~\ref{thm:fro}). Note that a tubal sub-ring is an associative algebra, so if it is also a division ring, it is an associative division algebra. 
Thus, if $V$ is a tubal sub-field it must be isomorphic to either $\R$ or $\C$, which are easily diagonalizable:
\begin{proposition}
    \label{prof:sub-fields-diag}
    A tubal sub-field is diagonalizable.
\end{proposition}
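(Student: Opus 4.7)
The strategy is to combine the Frobenius classification (Theorem~\ref{thm:fro}) with the idempotent block decomposition developed earlier, reducing diagonalizability of the $n\times n$ matrix $\matR_\va$ to an explicit low-dimensional calculation. Since a tubal sub-field $V$ is a commutative, unital, associative division algebra over $\R$, Frobenius together with commutativity (which rules out the non-commutative $\mathbb{H}$) yields an $\R$-algebra isomorphism $\phi \colon V \to \R$ or $\phi \colon V \to \C$. In particular $\dim V \in \{1,2\}$.

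Next I would analyze the block structure of $\matR_\va$ for $\va \in V$ using the idempotent $\ve_V$. The ambient ring $\R^n$ is itself a tubal sub-ring (of itself), and $\ve_V$, being the unit of $V$, is an idempotent in $\R^n$. So Lemma~\ref{lem:decompose-from-idem} applied with the ambient sub-ring taken to be $\R^n$ and $\vp = \ve_V$ yields the direct-sum decomposition of real vector spaces $\R^n = (\ve_V) \oplus (\ve - \ve_V)$, where $\ve$ is the unit of the full ring. A short check gives $(\ve_V) = V$, using that $\ve_V$ is the unit of $V$ together with $V$ being an ideal. By the ideal property, $\matR_\va$ carries $V$ into $V$; meanwhile, for any $\vx = (\ve - \ve_V)\cdot\vy$ in the complementary summand, $\va \cdot \vx = (\va - \va\cdot\ve_V)\cdot\vy = \vzero_n$, since $\ve_V$ acts as the identity on $\va \in V$. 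Hence in a basis adapted to this splitting, $\matR_\va$ is block-diagonal of the form $\mathrm{diag}(\tilde{\matR}_\va, \matZero)$, where $\tilde{\matR}_\va \colon V \to V$ is the restriction, and $\matR_\va$ is diagonalizable over $\C$ if and only if $\tilde{\matR}_\va$ is.

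The final step is to identify $\tilde{\matR}_\va$ as left multiplication by $\va$ on $V$, which under $\phi$ becomes multiplication by the scalar $\phi(\va)$. When $V \cong \R$ this is a $1\times 1$ matrix and is trivially diagonalizable. When $V \cong \C$, multiplication by $a + b\si$ on $\C$ viewed as a real $2$-dimensional space is represented in the basis $\{1,\si\}$ by $\bigl[\begin{smallmatrix} a & -b \\ b & a \end{smallmatrix}\bigr]$, whose eigenvalues $a \pm b\si$ are distinct when $b \neq 0$ (so it is diagonalizable over $\C$) and which reduces to the scalar matrix $a\matI$ when $b=0$. In either case $\tilde{\matR}_\va$ is diagonalizable over $\C$, hence so is $\matR_\va$, and the proposition follows.

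The main obstacle I anticipate is the administrative cleanup of the block decomposition: one must verify that $\ve_V$ really functions as an idempotent in the ambient ring $\R^n$ (not merely inside $V$), so that Lemma~\ref{lem:decompose-from-idem} can be invoked at the ambient level, and that the annihilation identity $\matR_\va(\ve - \ve_V) = \matZero$ holds for every $\va \in V$. Once these compatibilities are pinned down, the Frobenius classification does the heavy lifting, since the only two possibilities for $V$ admit a completely explicit diagonalization and there is nothing further to check.
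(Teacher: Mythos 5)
Your proof is correct, and it diverges from the paper's in the dimension-two case. Both arguments begin identically: the Frobenius classification plus commutativity force $V \cong \R$ or $V \cong \C$, so $\dim V \in \{1,2\}$. Both also exploit the same underlying decomposition of $\R^n$ via the idempotent $\ve_V$ — your $\R^n = (\ve_V)\oplus(\ve - \ve_V)$ is precisely $\range(\matR_{\ve_V})\oplus\nullsp(\matR_{\ve_V})$, which the paper also invokes. But after that point the routes differ. The paper checks diagonalizability of two specific matrices: $\matR_{\ve_V}$ (idempotent, hence diagonalizable) and $\matR_{\vi_V}$ (its minimal polynomial divides $x(x^2+1)$, split into $x$ on $\nullsp(\matR_{\ve_V})$ and a divisor of $x^2+1$ on $\range(\matR_{\ve_V})$), and then must appeal to Lemma~\ref{lem:span-diag-is-diag} — which in turn rests on commutativity and Lemma~\ref{lem:joint-diag-commutative} — to promote diagonalizability from the two generators to the whole span $V$. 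You instead handle every $\va \in V$ at once: the block structure $\matR_\va = \mathrm{diag}(\tilde{\matR}_\va, \matZero)$ reduces the question to the restriction $\tilde{\matR}_\va$ on $V$, and conjugating by the Frobenius isomorphism $\phi$ identifies $\tilde{\matR}_\va$ with scalar multiplication by $\phi(\va)$ on $\R$ or on $\C\cong\R^2$, i.e. with an explicit $1\times1$ or $\bigl[\begin{smallmatrix} a & -b \\ b & a \end{smallmatrix}\bigr]$ matrix whose diagonalizability is immediate. Your approach is more self-contained — it bypasses the span/joint-diagonalizability machinery entirely — and arguably more conceptual, since it treats all elements of $V$ uniformly rather than singling out $\ve_V$ and $\vi_V$; the paper's proof is a bit shorter given that the supporting lemmas are already in place for the main theorem.
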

\begin{proof}
    A tubal sub-field is a field, so it must be isomorphic to $\R$ or $\C$ (it cannot be isomorphic to $\mathbb{H}$ since $\mathbb{H}$ is not commutative). So, the dimension of $V$ must be 1 or 2.

    If the dimension is 1, then we must have $V=\{\alpha \ve_V\,|\,\alpha\in\R\}$. So, the representative matrices must be of the form $\alpha\matR_{\ve_V}$, so we only need to show that $\matR_{\ve_V}$ is diagonalizable. Since $\ve_V$ is idempotent, $\matR_{\ve_V}$ is idempotent as well, so it must be diagonalizable. 

    If the dimension is 2, then the field must be isomorphic to $\C$. In the isomorphism, $1$ is identified with $\ve_V$, while the imaginary unit $\si$ is identified with some element $\vi_V \in V$ for which $\vi_V \cdot \vi_V = -\ve_V$. Every element of $V$ is a linear combination of $\ve_V$ and $\vi_V$, so it is enough to show that both are diagonalizable. The sub-field unit $\ve_V$ is idempotent, so its representative matrix is idempotent, so diagonalizable. 

    We are left with showing that $\vi_V$, i.e., $\matR_{\vi_V}$ is diagonalizable. Apart from $\matR_{\ve_V}$ being idempotent, we also have $\matR^2_{\vi_V}=-\matR_{\ve_V}$ and $\matR_{\vi_V}\matR_{\ve_V}=\matR_{\ve_V}\matR_{\vi_V}=\matR_{\vi_V}$ where the last equality follows from the fact that $\vi_V\in V$ and $\ve_V$ is an identity with respect to $V$. Since $\matR_{\ve_V}$ is idempotent, we can decompose $\R^n = \range(\matR_{\ve_V})\oplus\nullsp(\matR_{\ve_V})$. Since $\matR_{\vi_V} \matR_{\ve_V}=\matR_{\vi_V}$ we have for every $\vx\in\nullsp(\matR_{\ve_V})$, $\matR_{\vi_V}\vx = 0$, so $\matR_{\vi_V}$ behaves like the zero matrix on that subspace, and the associated minimal polynomial is $x$. Now, on $\range(\matR_{\ve_V})$ we have that $\matR_{\vi_V}^2$ behaves like minus the identity (since $\matR_{\ve_V}$ is the identity on that subspace). So the minimal polynomial of $\matR_{\vi_V}$ divides $x^2+1$ on that subspace. Overall, we obtain that the minimal polynomial of $\matR_{\vi_V}$ divides $x(x^2+1)$ which factorizes $x(x+\si)(x-\si)$. All factors are distinct linear factors, so $\matR_{\vi_V}$ is diagonalizable.
\end{proof}

\subsection{Bringing It All Together}

Corollary~\ref{cor:full-decompose-to-ideals} ensures we can decompose $\R^n$ to a direct sum of tubal sub-fields. Proposition~\ref{prof:sub-fields-diag} ensures that each are diagonalizable. Lemma~\ref{lem:span-diag-is-diag} show that the span of the union of all those ideals is diagonalizable, but that union is clearly $\R^n$. Finally, Proposition~\ref{prop:diag-is-Mprod} concludes the proof.

\subsection{Additional Discussion}

Additional discussion of Theorem~\ref{thm:main} can be found in Appendix~\ref{app:additional-discussion}, including a sketch of a shorter (but less elementary) proof, a remark on the role of weak inverses, and an algorithm for recovering $\matM$ from a tubal product.

\section{Additional Proofs}
\label{sec:proof-winv-realness}

\begin{lemma}
\label{lem:struct-Minv}
    Suppose \(\matM\in\C^{n\times n}\) is an invertible matrix for which every row is either real, or is conjugate to exactly one other row. Consider $\matM^{-1}$. If row $j$ in $\matM$ is real, then column $j$ of $\matM^{-1}$ is real as well. On the other hand, if row $j$ is complex and conjugate to row $k$, then columns $j$ and $k$ in $\matM^{-1}$ are complex and conjugate to each other.
\end{lemma}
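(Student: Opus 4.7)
The plan is to encode the row structure of $\matM$ by a single permutation and then pass to the inverse. Concretely, define $\sigma:\{1,\dots,n\}\to\{1,\dots,n\}$ by $\sigma(j)=j$ if row $j$ of $\matM$ is real, and $\sigma(j)=k$ if row $j$ is complex and paired with its (unique) conjugate row $k$. The hypothesis on $\matM$ says exactly that $\sigma$ is a well-defined involution, and that row $\sigma(j)$ of $\matM$ equals $\overline{\matM_{j,:}}$ for every $j$. Let $\matP$ be the real permutation matrix associated with $\sigma$, so that left multiplication by $\matP$ permutes rows according to $\sigma$. The defining property of $\sigma$ then reads $\matP\matM=\overline{\matM}$, and $\matP^2=\matI$ because $\sigma$ is an involution.

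Next I would invert this identity. Since $\matP$ is real and $\matP^{-1}=\matP$, taking inverses of both sides of $\matP\matM=\overline{\matM}$ yields $\matM^{-1}\matP=\overline{\matM}^{-1}=\overline{\matM^{-1}}$, where in the last step I use the easily checked fact $\overline{\matM}^{-1}=\overline{\matM^{-1}}$. Right multiplication by $\matP$ permutes columns according to $\sigma$, so this single matrix identity is equivalent to the pointwise statement
\[
(\matM^{-1})_{:,\sigma(j)} \;=\; \overline{(\matM^{-1})_{:,j}} \qquad \text{for all } j=1,\dots,n.
\]
Reading off the two cases of $\sigma$ is now immediate. If row $j$ of $\matM$ is real, then $\sigma(j)=j$, so $(\matM^{-1})_{:,j}=\overline{(\matM^{-1})_{:,j}}$, i.e.\ column $j$ of $\matM^{-1}$ is real. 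If row $j$ is complex and paired with row $k\neq j$, then $\sigma(j)=k$ gives $(\matM^{-1})_{:,k}=\overline{(\matM^{-1})_{:,j}}$, i.e.\ columns $j$ and $k$ of $\matM^{-1}$ are complex conjugates of each other.

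The only point requiring a brief extra argument is that in the second case these two columns are genuinely complex, not accidentally real. If $(\matM^{-1})_{:,j}$ were real, then by the conjugation relation we would have $(\matM^{-1})_{:,k}=(\matM^{-1})_{:,j}$, so $\matM^{-1}$ would have two equal columns, contradicting its invertibility. I do not anticipate any serious obstacle here; the main conceptual step is recognizing the row-conjugation hypothesis as the single identity $\matP\matM=\overline{\matM}$, after which everything follows from taking inverses and using $\matP^{-1}=\matP$.
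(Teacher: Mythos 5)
Your proof is correct and, while both arguments pivot on a single auxiliary matrix factoring through $\matM^{-1}$, the route is genuinely different from the paper's. The paper first reorders the rows of $\matM$ so that the real rows come first and the conjugate pairs are adjacent, then multiplies on the left by a block matrix $\matZ$ built from a leading identity block and copies of $\left[\begin{smallmatrix} 1 & 1 \\ \si & -\si\end{smallmatrix}\right]$; the product $\matB=\matZ\matM$ is real, hence so is $\matB^{-1}=\matM^{-1}\matZ^{-1}$, and the columns of $\matM^{-1}=\matB^{-1}\matZ$ are read off from the explicit columns of $\matZ$. You instead encode the row-conjugation pairing as an involutive permutation matrix $\matP$ with $\matP\matM=\overline{\matM}$ and $\matP^2=\matI$, and obtain $(\matM^{-1})_{:,\sigma(j)}=\overline{(\matM^{-1})_{:,j}}$ in one step by inverting both sides. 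The two approaches trade slightly different benefits: the paper's realification matrix $\matZ$ is reused verbatim in the very next lemma to show that $\matM^{-1}\overline{\matM}$ is real, so it amortizes well locally, whereas your identity $\matP\matM=\overline{\matM}$ is coordinate-free (no WLOG reordering of rows is needed) and isolates the algebraic content of the hypothesis in a single matrix equation. You also explicitly verify that in the paired case the two columns of $\matM^{-1}$ are genuinely nonreal, a point the paper leaves implicit. One small stylistic note: the two possible conventions for how a permutation matrix acts on rows, $(\matP\matA)_{j,:}=\matA_{\sigma(j),:}$ versus $(\matP\matA)_{\sigma(j),:}=\matA_{j,:}$, coincide here only because $\sigma$ is an involution; stating this explicitly would spare the reader a moment of doubt.
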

\begin{proof}
    Without loss of generality we can assume that the first $p$ rows of $\matM$ are real, and that the conjugate pairs are $(p+1,p+2),(p+3,p+4),\dots,(n-1,n)$. If that was not the case, we could just permute the rows, and apply the same permutation to $\matM^{-1}$.

    Consider the matrix
    \begin{equation*}
        \matZ = \begin{bmatrix}
            1  &   &        &   &     &      &        &     &   \\
               & 1 &        &   &     &      &        &     &   \\
               &   & \ddots &   &     &      &        &     &   \\ 
               &   &        & 1 &     &      &        &     &   \\
               &   &        &   & 1   & 1    &        &     &   \\
               &   &        &   & \si & -\si &        &     &   \\
               &   &        &   &     &      & \ddots &     &   \\
               &   &        &   &     &      &        & 1   & 1 \\
               &   &        &   &     &      &        & \si & -\si               
        \end{bmatrix}
    \end{equation*}
    where the leading identity is of size $p$, and the block $\begin{bmatrix}
        1 & 1 \\ \si & -\si \end{bmatrix}$ is repeated $(n-p)/2$ times. The matrix $\matB = \matZ\matM$ is real, and as  the product of two invertible matrices is invertible as well. The inverse of a real matrix, if it exists, is real, so $\matB^{-1} = \matM^{-1}\matZ^{-1}$ is real. From $\matM^{-1}= \matB^{-1}\matZ$ and realness of $\matB^{-1}$ the claim follows.
\end{proof}

\begin{lemma}
    Suppose \(\matM\in\C^{n\times n}\) is an invertible matrix for which every row is either real, or is conjugate to exactly one other row. Then $\matM^{-1}\overline{\matM}$ is real. 
\end{lemma}
\begin{proof}
    Again, without loss of generality we may assume the real rows of $\matM$ are the first rows, while all paired complex rows appear consecutively afterwards. Writing $\matB = \matZ\matM$ like in the previous proof, we have $\matM^{-1}\overline{\matM}=\matB^{-1}\matZ\overline{\matZ}^{-1} \matB$ where we used the fact that $\matB$ is real. Since, 
    \begin{equation*}
        \begin{bmatrix}
            1 & 1 \\
            \si & -\si
        \end{bmatrix}\overline{
        \begin{bmatrix}
            1 & 1 \\
            \si & -\si
        \end{bmatrix}}^{-1} =\
        \begin{bmatrix}
            1 & 0 \\ 0 & -1
        \end{bmatrix}
    \end{equation*}
    we see that $\matZ\overline{\matZ}^{-1}$ is a real matrix. So, $\matM^{-1}\overline{\matM}$ is the product of three real matrices, so must be real. 
\end{proof}

\begin{lemma}
    Let  \(\matM\in\C^{n\times n}\) be an invertible matrix. The result of Eq. \eqref{eq:Mprod} is real for every $\va,\vb\in \R^n$, if and only if every row of $\matM$ is either real, or is conjugate to exactly one other row of $\matM$.
\end{lemma}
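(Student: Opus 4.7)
The plan is to handle the two directions separately. The $(\Leftarrow)$ direction amounts to a bookkeeping argument using Lemma~\ref{lem:struct-Minv}, while the $(\Rightarrow)$ direction requires a more substantive algebraic argument that I outline below.

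For $(\Leftarrow)$, the idea is to track a ``structured'' subspace of $\C^n$ through the formula. First, I will observe that if every row of $\matM$ is either real or part of a conjugate pair, then for every $\va\in\R^n$ the vector $\matM\va$ inherits a matching structure: its $k$-th entry $\vr_k^T\va$ is real if $\vr_k$ is real, and for a paired row $\vr_l=\overline{\vr_k}$ the entries satisfy $\vr_l^T\va=\overline{\vr_k^T\va}$. The Hadamard product of two vectors with this structure preserves it, since $\overline{z}\cdot\overline{w}=\overline{zw}$. By Lemma~\ref{lem:struct-Minv}, the columns of $\matM^{-1}$ have the dual structure (a real column, or a conjugate pair of columns), and a direct computation shows that applying such an inverse to any vector with this structure yields a real vector — the contribution of each conjugate pair is of the form $z+\overline{z}\in\R$.

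For $(\Rightarrow)$, set $\matP\coloneqq \matM\overline{\matM}^{-1}$, so $\matP\overline{\matP}=\matI$. Reality of $\va\Mprod\vb$ for real $\va,\vb$ is equivalent to $\matM^{-1}(\matM\va\hadprod\matM\vb)=\overline{\matM^{-1}(\matM\va\hadprod\matM\vb)}=\overline{\matM}^{-1}(\overline{\matM}\va\hadprod\overline{\matM}\vb)$; multiplying through by $\matM$ this becomes $\matM\va\hadprod\matM\vb = \matP(\overline{\matM}\va\hadprod\overline{\matM}\vb)$. Reading this coordinate-wise in terms of the rows $\vr_k$ of $\matM$ yields the identity of symmetric bilinear forms $(\vr_k^T\va)(\vr_k^T\vb) = \sum_l P_{kl}(\overline{\vr_l}^T\va)(\overline{\vr_l}^T\vb)$ on $\R^n\times\R^n$. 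Equating the underlying (complex symmetric) Gram matrices, I obtain the key identity
\begin{equation*}
\vr_k\vr_k^T \;=\; \sum_{l=1}^n P_{kl}\,\overline{\vr_l}\,\overline{\vr_l}^{\,T}\qquad\text{for every }k.
\end{equation*}

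The technical heart of the argument is then the following rank-one fact: if $\vv_1,\dots,\vv_n\in\C^n$ are linearly independent and $\sum_l c_l\vv_l\vv_l^T$ has rank at most one, then at most one $c_l$ is nonzero. This follows because the tensors $\vv_l\otimes\vv_{l'}$ form a basis of $\C^n\otimes\C^n$, so writing $\sum_l c_l\vv_l\vv_l^T=\vw\vw^T$ with $\vw=\sum_l a_l\vv_l$ forces $a_l a_{l'}=0$ for $l\neq l'$. Applied with $\vv_l=\overline{\vr_l}$ (linearly independent since $\overline{\matM}$ is invertible), this yields for each $k$ a unique index $\pi(k)$ with $P_{k,\pi(k)}\neq 0$ and $\vr_k\vr_k^T=P_{k,\pi(k)}\overline{\vr_{\pi(k)}}\,\overline{\vr_{\pi(k)}}^{\,T}$. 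The identity $\matP\overline{\matM}=\matM$ also gives $\vr_k=P_{k,\pi(k)}\overline{\vr_{\pi(k)}}$; substituting this back forces $P_{k,\pi(k)}^2=P_{k,\pi(k)}$, so $P_{k,\pi(k)}=1$ and $\vr_k=\overline{\vr_{\pi(k)}}$. Linear independence of the rows then immediately implies $\pi\circ\pi=\mathrm{id}$, so the fixed points of $\pi$ correspond exactly to the real rows and its $2$-cycles to the conjugate pairs; uniqueness of the partner for a non-real row is likewise forced by linear independence. The main obstacle is the rank-one lemma — everything before it reduces the hypothesis to a clean symmetric-matrix identity, and everything after it is routine bookkeeping.
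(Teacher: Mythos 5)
Your proof is correct in both directions. The $(\Leftarrow)$ argument — tracking the ``real / conjugate-pair'' structure through the Hadamard product and then through the columns of $\matM^{-1}$ via Lemma~\ref{lem:struct-Minv} — is essentially the same as the paper's.

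For $(\Rightarrow)$ you take a genuinely different route that arrives at the same place. The paper forms the representation matrix $\matR_\vx = \matM^{-1}\diag(\matM\vx)\matM$, observes that realness of the product forces $\matR_\vx$ real for real $\vx$, and from $\matR_\vx = \overline{\matR_\vx}$ derives a commutation identity $\matN\diag(\matM_{:k}) = \diag(\overline{\matM}_{:k})\matN$ with $\matN = \overline{\matM}\matM^{-1}$. It then reads this identity entry-by-entry: any nonzero $\matN_{ij}$ forces rows $i,j$ of $\matM$ to be conjugate, and having two nonzero entries in a row of $\matN$ would make two rows of $\matM$ equal, contradicting invertibility, so $\matN$ is monomial and the structure follows. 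You instead work with $\matP = \matM\overline{\matM}^{-1} = \matN^{-1}$, never pass through $\matR_\vx$, and encode the realness hypothesis directly as the identity of symmetric Gram matrices $\vr_k\vr_k^T = \sum_l P_{kl}\overline{\vr_l}\overline{\vr_l}^T$. Your rank-one tensor lemma (using the fact that $\{\overline{\vr_l}\otimes\overline{\vr_{l'}}\}$ is a basis of $\C^n\otimes\C^n$) then does the work that the paper's invertibility argument does: it shows a single $P_{k,\pi(k)}$ survives per row, and the normalization $P_{k,\pi(k)}^2 = P_{k,\pi(k)} = 1$ follows from combining the Gram identity with $\matP\overline{\matM} = \matM$. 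Your approach is more structural and packages the combinatorial core into a clean, reusable linear-algebra fact; the paper's is more elementary and avoids the tensor-product machinery, at the cost of a small amount of case checking. Both are sound, and they are complementary ways of seeing why the conjugation matrix must be a permutation.

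One small polish: you should state explicitly that $\vr_k\neq 0$ (immediate from invertibility of $\matM$), since your rank-one lemma as phrased concludes ``at most one $c_l$ nonzero,'' and you need exactly one to define $\pi(k)$; when $\vr_k\neq 0$ the left-hand side $\vr_k\vr_k^T$ is nonzero, which rules out all $P_{kl}$ vanishing. Similarly the uniqueness of the partner row in a $2$-cycle deserves a one-line remark (two distinct conjugates of the same row would be equal rows of $\matM$), which you do gesture at with ``likewise forced by linear independence'' but could make crisper.
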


\begin{proof}
    Suppose every row of $\matM$ is either real, or is conjugate to exactly one other row of $\matM$. Let $\va,\vb\in \R^n$. Consider $\matM\va$. Every element in $\matM\va$ is either guaranteed to be real (if the corresponding row in $\matM$ is real), or is either guaranteed to have a conjugate element in another index (the index of the row that is conjugate to the row corresponding to the element). Furthermore, exactly the same structure is present in $\matM\vb$, so this structure carries to $\matM\va\hadprod\matM\vb$. When multiplying on the left by $\matM^{-1}$, due to Lemma~\ref{lem:struct-Minv}, each entry guaranteed to be real meets a real column, while guaranteed conjugate pairs meet conjugate columns, so we get a contribution that is the sum of conjugate vectors so is real. Thus, the result is the sum of real vectors, so it must be real.
    
    We now show the other direction. Assume that $\matM$ is such that the result of Eq. \eqref{eq:Mprod} is real for every $\va,\vb\in \R^n$. For $\vx \in \C^n$, denote  $\matR_{\vx}\coloneqq \matM^{-1} \diag(\matM \vx) \matM$. For all $\vx,\vy\in\C^n$ we have $\vx \Mprod \vy = \matR_\vx \vy$. For $\va\in\R^n$ we have $\matR_\va \vy = \va\Mprod\vy\in\R^n$ for all $\vy\in\R^n$, so $\matR_\vx$ must be real, and this holds for all $\vx\in\R^n$.
    So, $\matR_\vx=\overline{\matR_\vx}$, i.e.,
    \begin{equation*}
         \matM^{-1} \diag(\matM \vx) \matM =  \overline{\matM}^{-1} \diag(\overline{\matM} \vx) \overline{\matM}
    \end{equation*}
    Let $\matN \coloneqq \overline{\matM}\matM^{-1}$. Multiply on the left by $\overline{\matM}$ and on the right by $\matM^{-1}$ to get 
    \begin{equation*}
        \matN \diag(\matM\vx) = \diag(\overline{\matM}\vx)\matN
    \end{equation*}
    For an index $k$, let $\ve_k$ denote the $k$th standard basis vector. Noticing that $\matM\ve_k = \matM_{:k}$ we see that 
    \begin{equation*}
        \label{eq:MN}
        \matN \diag(\matM_{:k}) = \diag(\overline{\matM}_{:k})\matN
    \end{equation*}
    for $k=1,\dots,n$. 
    
    Suppose $i,j$ are indices for which $\matN_{ij}\neq 0$. The $(i,j)$ entry on the left of Eq.~\eqref{eq:MN} is equal to $\matN_{ij}\matM_{jk}$, while the $(i,j)$ entry on the right is equal to $\overline{\matM}_{ik}\matN_{ij}$, both of which are equal. We see that $\matM_{jk} = \overline{\matM}_{ik}$ and this holds for all for all $k$, so the entries $i$th and $j$th rows of $\matM$ are conjugate of each other. 

    Consider a row index $i$. $\matN$ is the product of two invertible matrices, so it is invertible. This means that row $i$ is not zero. Furthermore, it has a single non-zero entry. To see this, suppose row $i$ has two non-zero entries: $\matN_{ij}\neq 0$ and $\matN_{il}\neq 0$. Row $j$ of $\matN$ is conjugate of row $i$, and likewise row $l$  of $\matN$ is conjugate of row $i$. We conclude that rows $j$ and $l$ are both conjugate of row $i$, so they must be equal, which contradicts the assumption that $\matM$ is invertible. So, row $i$ of $\matM$ has a single non-zero entry $\matN_{ij}$ for some $j$, and row $i$ and $j$ of $\matM$ are conjugate of each other. If $j=i$, then row $i$ is conjugate of itself, so it must be real. If $j\neq i$, then row $i$ is complex and has a conjugate pair in $\matM$. Since each row is non-zero in some entry, we have shown the claim.
\end{proof}

\begin{proof}[Proof of Lemma~\ref{lem:winv-realness}]
Consider index $j$. If row $j$ of $\matM$ is real, then define $\vp_j \coloneqq \matM^{-1}\ve_j$ (i.e., column $j$ of $\matM^{-1}$). This vector is real. On the other hand, if it is complex, and paired with row $k>j$ (if $k<j$, the simple exchange them), define $\vp_j \coloneqq \matM^{-1}(\ve_j + \ve_k)$ and  $\vp_k \coloneqq \matM^{-1}(\si\ve_j - \si\ve_k)$. Both of these are real vectors, due to Lemma~\ref{lem:struct-Minv} .

Overall, we have defined $n$ vectors $\vp_1, \dots, \vp_n$. They are linearly independent because the columns or $\matM^{-1}$ are linearly independent. Consider two indexes $j$ and $k$. We have the following cases regarding $\vp_j \Mprod \vp_k$:
\begin{itemize}
    \item If $j=k$ and row $j$ in $\matM$ is real. In this case, 
    \begin{equation*}
        \vp_j \Mprod \vp_j = \matM^{-1}(\matM\vp_j \hadprod \matM\vp_j) = \matM^{-1}(\ve_j \hadprod \ve_j) = \matM^{-1}\ve_j = \vp_j 
    \end{equation*}
    \item If $j=k$ and row $j$ in $\matM$ is complex. In this case, there is a conjugate pair $l$. If $j < l$:
    \begin{equation*}
         \vp_j \Mprod \vp_j = \matM^{-1}(\matM\vp_j \hadprod \matM\vp_j)= \matM^{-1}((\ve_j + \ve_l) \hadprod (\ve_j+\ve_l)) =  \matM^{-1}(\ve_j + \ve_l) = \vp_j
    \end{equation*}
    If $j > l$:
      \begin{align*}
         \vp_j \Mprod \vp_j &= \matM^{-1}(\matM\vp_j \hadprod \matM\vp_j) \\
         &= \matM^{-1}((\si\ve_l - \si\ve_j) \hadprod (\si\ve_l - \si\ve_j)) =  -\matM^{-1}(\ve_l + \ve_j) = -\vp_l
       \end{align*}
    \item If $j\neq k$, and both row $j$ and $k$ in $\matM$ are real. In this case,
    \begin{equation*}
        \vp_j \Mprod \vp_k = \matM^{-1}(\matM\vp_j \hadprod \matM\vp_k) = \matM^{-1}(\ve_j \hadprod \ve_k) = \vzero_n 
    \end{equation*} 
    \item If $j\neq k$ and row $j$ is real, while row $k$ complex, then $\matM\vp_j = \ve_j$ is non-zero only in index $j$, while $\matM\vp_k$ is non-zero only in indices $k$ and $l$. Regardless, we see that $\matM\vp_j\hadprod\matM\vp_k=\vzero_n$, so $$\vp_j \Mprod \vp_k = \vzero_n$$
    \item If $j\neq k$, and both row $j$ and $k$ in $\matM$ are complex. If they are not conjugate pairs, then $\matM\vp_j$ and $\matM\vp_k$ are non-zero in different indices, and so 
    $$\vp_j \Mprod \vp_k = \vzero_n$$
    If they are conjugate pairs, assume without loss of generality that $j < k$, and
        \begin{align*}
         \vp_j \Mprod \vp_k &= \matM^{-1}(\matM\vp_j \hadprod \matM\vp_k) \\
         &= \matM^{-1}((\ve_j + \ve_k) \hadprod (\si\ve_j-\si\ve_k)) =  \matM^{-1}(\si\ve_j-\si\ve_k) = \vp_k
    \end{align*}
\end{itemize}
We also have the following cases regarding the $\KM$-conjugate of $\vp_j$ for $j=1,\dots,n$:
\begin{itemize}
    \item If row $j$ is real in $\matM$, then $\matM\vp_j$ is real as well, and so $\vp^\sconj_j = \vp_j$.
    \item If row $j$ is complex in $\matM$, and its conjugate pair $k$ has a higher index ($k>j$), then also $\matM\vp_j$ is real as well, and so $\vp^\sconj_j = \vp_j$.
    \item If row $j$ is complex in $\matM$, and its conjugate pair $k$ has a lower index ($k<j$) then
    \begin{equation*}
        \vp^\sconj_j=\matM^{-1}\overline{\matM \vp_j} = \matM^{-1}\overline{(\si\ve_k - \si\ve_j)} = 
        \matM^{-1}(\si\ve_j - \si\ve_k) = -\vp_j
    \end{equation*}
\end{itemize}

Given a vector $\va$, write it $\va = \sum^n_{j=1} \gamma_j \vp_j$ where $\gamma_1,\dots,\gamma_n\in\R$. We now define another vector $\vy=\sum_{i=1}\delta_j \vp_j\in\R^n$ via the following rules:
\begin{itemize}
    \item If row $j$ is real in $\matM$, then $\delta_j=\gamma^\pinv_j$.
    \item If rows $j$ and $k$ are conjugate pair of complex rows in $\matM$ with $j<k$, and $\gamma_k\gamma_l\neq0$ then 
    \begin{align*}
        \delta_j &= \frac{\gamma_j}{\gamma^2_j +\gamma^2_k} \\
        \delta_k &= \frac{-\gamma_k}{\gamma^2_j +\gamma^2_k}
    \end{align*}
\end{itemize}
It is not hard, though a bit tedious, to verify that the following hold:
\begin{align*}
    \va \Mprod \vy \Mprod \va  &= \va \\
    \vy \Mprod \va \Mprod \vy  &= \vy \\
    (\va \Mprod \vy)^\sconj &= \va \Mprod \vy \\
    (\vy \Mprod \va)^\sconj &= \vy \Mprod \va
\end{align*}
The same four equations hold if we replace $\va$ with $\matR_\va$, $\vy$ with $\matR_\vy$ and $\Mprod$ with matrix product. Thus, $\matR_\vy$ is the Moore-Penrose pseudoinverse of $\matR_\va$. Now, notice that even if $\va^\winv$ is complex, the same four equations hold with $\va^
\winv$ replacing $\vy$, and similary $\matR_{\va^\winv}$ is the Moore-Penrose pseudoinverse of $\matR_\va$. However, the Moore-Penrose pseudoinverse is unique, so $\matR_{\va^\winv}= \matR_\vy$. Via the definitions of $\matR_\vy$ and $\matR_{\va^\winv}$, it is immediate that this implies that $\va^\winv=\vy$. Since $\vy$ is real, so is $\va^\winv$.

\end{proof}

\subsection{Canonical Tubal Rings and the Proof of Proposition \ref{prop:realness-iso}}

Given $n$ and $m\leq n$ of same parity, define
    \begin{equation*}
        {\renewcommand{\arraystretch}{0.85}\setlength{\arraycolsep}{2pt}
        \matM_{n,m} \coloneqq \begin{bmatrix}
            1  &   &        &   &     &      &        &     &   \\
               & 1 &        &   &     &      &        &     &   \\
               &   & \ddots &   &     &      &        &     &   \\
               &   &        & 1 &     &      &        &     &   \\
               &   &        &   & \nicefrac{1}{\sqrt{2}}   & \nicefrac{-\si}{\sqrt{2}}  &        &     &   \\
               &   &        &   & \nicefrac{1}{\sqrt{2}} & \nicefrac{\si}{\sqrt{2}} &        &     &   \\
               &   &        &   &     &      & \ddots &     &   \\
               &   &        &   &     &      &        & \nicefrac{1}{\sqrt{2}}   & \nicefrac{-\si}{\sqrt{2}} \\
               &   &        &   &     &      &        & \nicefrac{1}{\sqrt{2}} & \nicefrac{\si}{\sqrt{2}}
        \end{bmatrix}}
    \end{equation*}
where the number of leading 1's is $m$. We call $\mathbb{K}_{n,m}\coloneqq \mathbb{K}_{\matM_{n,m}}$ for various $n,m$ the {\em canonical tubal rings}. We show that each tubal ring is isomorphic to a canonical tubal ring. Proposition \ref{prop:realness-iso} is an immediate corollary.

\begin{proposition}
    Consider the tubal ring defined by $\matM\in\C^{n\times n}$. Let $m$ be the realness of $\matM$. Then $\KM$ is isomorphic as a ring to $\mathbb{K}_{n,m}$.
\end{proposition}

\begin{proof}
    We can assume with out loss of generality that the real rows of $\matM$ are the first $m$ rows, and that the conjugate rows appear in adjunct pairs afterwards. Otherwise, we can simply permute the entries for this to hold, and this is clearly an isomorphic operation.

    Note that
    \begin{equation*}
        {\renewcommand{\arraystretch}{0.85}\setlength{\arraycolsep}{2pt}
        \matM^{-1}_{n,m} \coloneqq \begin{bmatrix}
            1  &   &        &   &     &      &        &     &   \\
               & 1 &        &   &     &      &        &     &   \\
               &   & \ddots &   &     &      &        &     &   \\
               &   &        & 1 &     &      &        &     &   \\
               &   &        &   & \nicefrac{1}{\sqrt{2}}   & \nicefrac{1}{\sqrt{2}}  &        &     &   \\
               &   &        &   & \nicefrac{\si}{\sqrt{2}} & \nicefrac{-\si}{\sqrt{2}} &        &     &   \\
               &   &        &   &     &      & \ddots &     &   \\
               &   &        &   &     &      &        & \nicefrac{1}{\sqrt{2}}   & \nicefrac{1}{\sqrt{2}} \\
               &   &        &   &     &      &        & \nicefrac{\si}{\sqrt{2}} & \nicefrac{-\si}{\sqrt{2}}
        \end{bmatrix}}
    \end{equation*}
    so $\matM' \coloneqq \matM^{-1}_{n,m} \matM$ is an invertible real matrix. Let $T:\R^n\to\R^n$ be defined by $T(\vx)=\matM' \vx$. We claim that $T$ is the isomorphism between $\KM$ and $\mathbb{K}_{n,m}$ are isomorphic. Since $T$ is a linear operation, we need only to verify it respects that product operation and the conjugation operation. That is, for every $\vx,\vy$ we have $T(\vx \Mprod \vy) = T(\vx) \star_{\matM_{n,m}} T(\vy)$ and $T(\vx^\sconj) = T(\vx)^\sconj$, where in the second equality the left conjugation is with respect to $\matM$ and the right is with respect to $\matM_{n,m}$.
    For the first equation,
    {\setlength{\jot}{1pt}
    \begin{align*}
        T(\vx \Mprod \vy) &= T(\matM^{-1}(\matM\vx \hadprod \matM\vy))
        = \matM^{-1}_{n,m}(\matM\vx \hadprod \matM\vy)) \\
        &= \matM^{-1}_{n,m}( \matM_{n,m}\matM'\vx \hadprod \matM_{n,m}\matM'\vy))
        = \matM^{-1}_{n,m}( \matM_{n,m}T(\vx) \hadprod \matM_{n,m}T(\vy))) \\
        &= T(\vx) \star_{\matM_{n,m}} T(\vy).
    \end{align*}}
    For the second equation,
    {\setlength{\jot}{1pt}
    \begin{align*}
        T(\vx^\sconj) &= T(\matM^{-1}\overline{\matM \vx})
        = \matM^{-1}_{n,m}\overline{\matM \vx}
        = \matM^{-1}_{n,m}\overline{\matM_{n,m}\matM' \vx}
        = T(\vx)^\sconj.
    \end{align*}}
\end{proof}
\section*{Acknowledgments}

This research was funded by the Israel Science Foundation (grant 1524/23). AI was used in preparing this paper, predominantly for copy editing and formal verification of proofs in Lean (the proof themselves were written by the authors). The authors assume responsibility for all content. 

\label{bibliography link}
\bibliography{tensors}

@article{kolda-2009-tensor-decom-applic,
  author =	 {Tamara G. Kolda and Brett W. Bader},
  title =	 {Tensor Decompositions and Applications},
  journal =	 {SIAM Review},
  volume =	 51,
  number =	 3,
  pages =	 {455-500},
  year =	 2009,
  doi =		 {10.1137/07070111x},
  url =		 {http://dx.doi.org/10.1137/07070111X},
  DATE_ADDED =	 {Tue Feb 13 17:29:15 2024},
}

@article{braman-2010-third-order,
  author =	 {Karen Braman},
  title =	 {Third-Order Tensors As Linear Operators on a Space
                  of Matrices},
  journal =	 {Linear Algebra and its Applications},
  volume =	 433,
  number =	 7,
  pages =	 {1241-1253},
  year =	 2010,
  doi =		 {10.1016/j.laa.2010.05.025},
  url =		 {http://dx.doi.org/10.1016/j.laa.2010.05.025},
  DATE_ADDED =	 {Sun Feb 18 16:25:00 2024},
}

@article{kilmer-2021-tensor-tensor,
  author =	 {Misha E. Kilmer and Lior Horesh and Haim Avron and
                  Elizabeth Newman},
  title =	 {Tensor-Tensor Algebra for Optimal Representation and
                  Compression of Multiway Data},
  journal =	 {Proceedings of the National Academy of Sciences},
  volume =	 118,
  number =	 28,
  pages =	 {e2015851118},
  year =	 2021,
  doi =		 {10.1073/pnas.2015851118},
  url =		 {http://dx.doi.org/10.1073/pnas.2015851118},
  DATE_ADDED =	 {Sun Feb 18 16:28:22 2024},
}

@article{hillar-2013-most-tensor,
  author =	 {Christopher J. Hillar and Lek-Heng Lim},
  title =	 {Most Tensor Problems Are {NP-Hard}},
  journal =	 {{Journal of the ACM}},
  volume =	 60,
  number =	 6,
  pages =	 {1-39},
  year =	 2013,
  doi =		 {10.1145/2512329},
  url =		 {http://dx.doi.org/10.1145/2512329},
  DATE_ADDED =	 {Sun Feb 18 16:53:10 2024},
}

@article{kilmer-2013-third-order,
  author =	 {Misha E. Kilmer and Karen Braman and Ning Hao and
                  Randy C. Hoover},
  title =	 {Third-Order Tensors As Operators on Matrices: a
                  Theoretical and Computational Framework With
                  Applications in Imaging},
  journal =	 {SIAM Journal on Matrix Analysis and Applications},
  volume =	 34,
  number =	 1,
  pages =	 {148-172},
  year =	 2013,
  doi =		 {10.1137/110837711},
  url =		 {http://dx.doi.org/10.1137/110837711},
  DATE_ADDED =	 {Sun Feb 18 17:15:03 2024},
}

@article{kilmer-2011-factor-strat,
  author =	 {Misha E. Kilmer and Carla D. Martin},
  title =	 {Factorization Strategies for Third-Order Tensors},
  journal =	 {Linear Algebra and its Applications},
  volume =	 435,
  number =	 3,
  pages =	 {641-658},
  year =	 2011,
  doi =		 {10.1016/j.laa.2010.09.020},
  url =		 {http://dx.doi.org/10.1016/j.laa.2010.09.020},
  DATE_ADDED =	 {Sun Feb 18 17:16:11 2024},
}

@article{hitchcock-1927-expres-tensor,
  author =	 {Frank L. Hitchcock},
  title =	 {The Expression of a Tensor Or a Polyadic As a Sum of
                  Products},
  journal =	 {Journal of Mathematics and Physics},
  volume =	 6,
  number =	 {1-4},
  pages =	 {164-189},
  year =	 1927,
  doi =		 {10.1002/sapm192761164},
  url =		 {http://dx.doi.org/10.1002/sapm192761164},
  DATE_ADDED =	 {Thu Mar 7 16:23:44 2024},
}

@article{kruskal-1977-three-way-array,
  author =	 {Joseph B. Kruskal},
  title =	 {Three-Way Arrays: Rank and Uniqueness of Trilinear
                  Decompositions, With Application To Arithmetic
                  Complexity and Statistics},
  journal =	 {Linear Algebra and its Applications},
  volume =	 18,
  number =	 2,
  pages =	 {95-138},
  year =	 1977,
  doi =		 {10.1016/0024-3795(77)90069-6},
  url =		 {http://dx.doi.org/10.1016/0024-3795(77)90069-6},
  DATE_ADDED =	 {Thu Mar 7 22:12:07 2024},
}

@article{hitchcock-1928-multip-invar,
  author =	 {Frank L Hitchcock},
  title =	 {Multiple Invariants and Generalized Rank of a P‐way
                  Matrix Or Tensor},
  journal =	 {Journal of Mathematics and Physics},
  volume =	 7,
  number =	 {1-4},
  pages =	 {39-79},
  year =	 1928,
  doi =		 {10.1002/sapm19287139},
  url =		 {http://dx.doi.org/10.1002/sapm19287139},
  DATE_ADDED =	 {Sat Mar 9 23:51:26 2024},
}

@article{cattell-1944-paral-propor,
  author =	 {Raymond B. Cattell},
  title =	 {``{P}arallel Proportional Profiles'' and Other
                  Principles for Determining the Choice of Factors By
                  Rotation},
  journal =	 {Psychometrika},
  volume =	 9,
  number =	 4,
  pages =	 {267-283},
  year =	 1944,
  doi =		 {10.1007/bf02288739},
  url =		 {http://dx.doi.org/10.1007/BF02288739},
  DATE_ADDED =	 {Sat Mar 9 23:58:44 2024},
}

@techreport{harshman-1970-parafac,
  author       = {R. A. Harshman},
  title        = {Foundations of the {PARAFAC} procedure: Models and conditions for an “explanatory” multi-modal factor analysis},
  institution  = {UCLA Working Papers in Phonetics},
  number       = {16},
  year         = {1970},
  pages        = {1--84},
  note         = {Available at \url{http://publish.uwo.ca/~harshman/wpppfac0.pdf}}
}

@incollection{tucker-1963-implic,
  added-at =	 {2006-10-25T17:45:20.000+0200},
  address =	 {Madison WI},
  author =	 {Tucker, L. R.},
  biburl =
                  {https://www.bibsonomy.org/bibtex/2fb5f1151ae507bc3169316b93caded95/threemode},
  booktitle =	 {{P}roblems in measuring change.},
  description =	 {papers on three-mode analysis},
  editor =	 {Harris, C. W.},
  interhash =	 {e6c2d14ddb42cb2012afaeb8ab1d19aa},
  intrahash =	 {fb5f1151ae507bc3169316b93caded95},
  keywords =	 {data mode threemode three analysis},
  pages =	 {122--137},
  publisher =	 {University of Wisconsin Press},
  timestamp =	 {2006-10-25T17:45:20.000+0200},
  title =	 {{I}mplications of factor analysis of three-way
                  matrices for measurement of change},
  year =	 1963,
}

@book{bernstein-2011-matrix-mathem,
  author =	 {Bernstein, Dennis S.},
  title =	 {Matrix Mathematics: Theory, Facts, and Formulas},
  year =	 2011,
  publisher =	 {Princeton University Press},
  abstract =	 {When first published in 2005, Matrix Mathematics
                  quickly became the essential reference book for
                  users of matrices in all branches of engineering,
                  science, and applied mathematics. In this fully
                  updated and expanded edition, the author brings
                  together the latest results on matrix theory to make
                  this the most complete, current, and easy-to-use
                  book on matrices. Each chapter describes relevant
                  background theory followed by specialized
                  results. Hundreds of identities, inequalities, and
                  matrix facts are stated clearly and rigorously with
                  cross references, citations to the literature, and
                  illuminat.},
  added-at =	 {2016-05-29T13:29:00.000+0200},
  biburl =
                  {https://www.bibsonomy.org/bibtex/2251cba7a07236f0097c0d497f0cad619/ytyoun},
  edition =	 {Second},
  interhash =	 {40f9ac37383b545783ee8898f55dbabb},
  intrahash =	 {251cba7a07236f0097c0d497f0cad619},
  isbn =	 {9781400833344 1400833345 1283256010 9781283256018},
  keywords =	 {eigenvalues handbook laplacian linear.algebra matrix
                  textbook},
  refid =	 781453958,
  timestamp =	 {2017-02-12T15:42:26.000+0100},
}

@article{gleich-2012-power-arnol,
  author =	 {David F. Gleich and Chen Greif and James M. Varah},
  title =	 {The Power and {Arnoldi} Methods in an Algebra of
                  Circulants},
  journal =	 {Numerical Linear Algebra with Applications},
  volume =	 20,
  number =	 5,
  pages =	 {809-831},
  year =	 2012,
  doi =		 {10.1002/nla.1845},
  url =		 {http://dx.doi.org/10.1002/nla.1845},
  DATE_ADDED =	 {Wed Apr 17 23:39:15 2024},
}

@article{kernfeld-2015-tensor-tensor,
  author =	 {Eric Kernfeld and Misha Kilmer and Shuchin Aeron},
  title =	 {Tensor-Tensor Products With Invertible Linear
                  Transforms},
  journal =	 {Linear Algebra and its Applications},
  volume =	 485,
  pages =	 {545-570},
  year =	 2015,
  doi =		 {10.1016/j.laa.2015.07.021},
  url =		 {http://dx.doi.org/10.1016/j.laa.2015.07.021},
  DATE_ADDED =	 {Sun Apr 21 10:13:16 2024},
  file =         {kernfeld-2015-tensor-tensor}
}

@article{JvN36,
author = {John von Neumann },
title = {On Regular Rings},
journal = {Proceedings of the National Academy of Sciences},
volume = {22},
number = {12},
pages = {707-713},
year = {1936},
doi = {10.1073/pnas.22.12.707},
URL = {https://www.pnas.org/doi/abs/10.1073/pnas.22.12.707},
eprint = {https://www.pnas.org/doi/pdf/10.1073/pnas.22.12.707}}

@misc{navasca2010tensors,
      title={Tensors as module homomorphisms over group rings}, 
      author={Carmeliza Navasca and Michael Opperman and Timothy Penderghest and Christino Tamon},
      year={2010},
      eprint={1005.1894},
      archivePrefix={arXiv},
      primaryClass={math.NA},
      url={https://arxiv.org/abs/1005.1894}, 
}

@article{clifford-1873-biquaternions,
  author    = {Clifford, William Kingdon},
  title     = {Preliminary Sketch of Biquaternions},
  journal   = {Proceedings of the London Mathematical Society},
  volume    = {4},
  year      = {1873},
  pages     = {381--395}
}

@book{jaglom-1968-complex,
  title={Complex Numbers in Geometry},
  author={Yaglom, I.M.},
  lccn={lc66026269},
  series={Academic paperbacks},
  url={https://books.google.co.il/books?id=D3vXzwEACAAJ},
  year={1968},
  publisher={Academic Press}
}

@book{AtiyahMacdonald1969,
  author    = {Michael F. Atiyah and Ian G. Macdonald},
  title     = {Introduction to Commutative Algebra},
  publisher = {Addison-Wesley Publishing Co.},
  year      = {1969},
  address   = {Reading, Mass.-London-Don Mills, Ont.},
  isbn      = {0-201-00361-9}
}

@book{LombardiQuitte2015,
  author    = {Henri Lombardi and Claude Quitt{\'e}},
  title     = {Commutative Algebra: Constructive Methods -- Finite Projective Modules},
  series    = {Algebra and Applications},
  year      = {2015},
  publisher = {Springer},
  isbn      = {978-94-017-9944-7},
  doi       = {10.1007/978-94-017-9944-7}
}

@misc{BurklundEtAl22,
      title={The Chromatic Nullstellensatz}, 
      author={Robert Burklund and Tomer M. Schlank and Allen Yuan},
      year={2022},
      eprint={2207.09929},
      archivePrefix={arXiv},
      primaryClass={math.AT},
      url={https://arxiv.org/abs/2207.09929}, 
}

@techreport{Kilmer2008TR,
  author       = {Misha E.~Kilmer and Carla D.~Martin and Lisa Perrone},
  title        = {A Third-order Generalization of the Matrix {SVD} as a Product of Third-order Tensors},
  institution  = {Department of Computer Science, Tufts University},
  number       = {2008-4},
  type         = {Technical Report},
  pages        = {20},
  address      = {Medford, MA},
  year         = {2008},
  url          = {https://www.cs.tufts.edu/t/tech_reports/reports/2008-4/report.pdf}
}

@article{MartinShaferLaRue13,
author = {Martin, Carla D. and Shafer, Richard and LaRue, Betsy},
title = {An Order-$p$ Tensor Factorization with Applications in Imaging},
journal = {SIAM Journal on Scientific Computing},
volume = {35},
number = {1},
pages = {A474-A490},
year = {2013},
doi = {10.1137/110841229},
URL = { https://doi.org/10.1137/110841229},
eprint = {https://doi.org/10.1137/110841229}
}

@Inbook{Takesaki2003,
author={Takesaki, Masamichi},
title={Left Hilbert Algebras},
bookTitle={Theory of Operator Algebras II},
year={2003},
publisher={Springer Berlin Heidelberg} ,
address={Berlin, Heidelberg},
pages={1--39},
abstract={This chapter is devoted to the foundation of non-commutative integration theory. In the first volume of this book, we have seen the strong similarity between the theory of operator algebras and the integration theory. To explore this similarity further, it is necessary to work on the theory of left Hilbert algebras. It is the non-commutative counter part of the algebra of all bounded square integrable functions on a measure space.},
isbn={978-3-662-10451-4},
doi={10.1007/978-3-662-10451-4_1},
url={https://doi.org/10.1007/978-3-662-10451-4_1}
}

@article{elizabeth2024a,
  author =	 {Elizabeth Newman and Katherine Keegan},
  title =	 {Optimal Matrix-Mimetic Tensor Algebras via Variable Projection},
  journal =	 {SIAM Journal on Matrix Analysis and Applications},
  volume =	 46,
  number =	 3,
  pages =	 {1764-1790},
  year =	 2025,
  doi =		 {10.1137/24M1702635},
}

@article{keegan2024a,
  author =	 {Katherine Keegan and Elizabeth Newman},
  title =	 {Projected Tensor-Tensor Products for Efficient Computation of Optimal Multiway Data Representations},
  journal =	 {Linear Algebra and its Applications},
  volume =	 729,
  pages =	 {100-147},
  year =	 2026,
  doi =		 {10.1016/j.laa.2025.09.018},
}

@book{OppenheimSchaferBuck1999,
  author    = {Alan V. Oppenheim and Ronald W. Schafer and John R. Buck},
  title     = {Discrete-Time Signal Processing},
  edition   = {2nd},
  publisher = {Prentice Hall},
  year      = {1999}
}

@misc{mor-2025-sufficientnecessary,
  title = {Sufficient and {{Necessary Conditions}} for {{Eckart-Young}} like {{Result}} for {{Tubal Tensors}}},
  author = {Mor, Uria},
  year = 2025,
  month = dec,
  doi = {10.48550/arXiv.2512.24405},
  abstract = {A valuable feature of the tubal tensor framework is that many familiar constructions from matrix algebra carry over to tensors, including SVD and notions of rank. Most importantly, it has been shown that for a specific family of tubal products, an Eckart-Young type theorem holds, i.e., the best low-rank approximation of a tensor under the Frobenius norm is obtained by truncating its tubal SVD. In this paper, we provide a complete characterization of the family of tubal products that yield an Eckart-Young type result. We demonstrate the practical implications of our theoretical findings by conducting experiments with video data and data-driven dynamical systems.},
  keywords = {math.NA}
}

@book{dummit-2009-abstractalgebra,
  title = {Abstract Algebra},
  author = {Dummit, David S. and Foote, Richard M.},
  year = 2003,
  edition = {3. ed., [Nachdr.]},
  publisher = {Wiley},
  address = {New York},
  isbn = {978-0-471-43334-7 978-0-471-45234-8},
  langid = {english}
}

\clearpage

\appendix

\begin{center}
{\Large\bfseries Appendix: Demystifying Tubal Tensor Algebra}
\end{center}
\bigskip

\section{Worked Examples}
\label{app:worked-examples}

In this appendix we present detailed worked examples that illustrate the t-SVD construction and its truncation, complementing the theoretical development in Section~\ref{sec:org134fd1b}.

\begin{example}[Worked example: t-SVD of a $2\times 2\times 2$ tensor]
\label{ex:tsvd-worked}
We illustrate the t-SVD computation for a small tensor \(\tenA \in \R^{2\times 2 \times 2}\) with \(\matM = \matF_2 = \left[\begin{smallmatrix} 1 & 1 \\ 1 & -1 \end{smallmatrix}\right]\), the DFT matrix for \(n=2\). Here \(\matF_2^{-1} = \tfrac{1}{2}\matF_2\), so the tubal product is the t-product \(\tprod\).

\medskip
\noindent\textbf{Step 1: The input tensor.}
Define \(\tenA\) by its two frontal slices:
\[
\matA_1 = \begin{bmatrix} 3 & 1 \\ 1 & 3 \end{bmatrix}, \qquad
\matA_2 = \begin{bmatrix} 1 & 1 \\ 1 & 1 \end{bmatrix}.
\]

\noindent\textbf{Step 2: Move to the transform domain.}
Compute \(\hat{\tenA} = \tenA \nmodeprod{3} \matF_2\). Since \(\matF_2 = \left[\begin{smallmatrix} 1 & 1 \\ 1 & -1 \end{smallmatrix}\right]\), the transformed frontal slices are:
\[
\hat{\matA}_1 = \matA_1 + \matA_2 = \begin{bmatrix} 4 & 2 \\ 2 & 4 \end{bmatrix}, \qquad
\hat{\matA}_2 = \matA_1 - \matA_2 = \begin{bmatrix} 2 & 0 \\ 0 & 2 \end{bmatrix} = 2\matI.
\]

\noindent\textbf{Step 3: Compute SVD of each transformed frontal slice.}
The matrix \(\hat{\matA}_1\) is symmetric with eigenvalues \(6\) and \(2\) (eigenvectors \(\tfrac{1}{\sqrt{2}}\left[\begin{smallmatrix} 1 \\ 1\end{smallmatrix}\right]\) and \(\tfrac{1}{\sqrt{2}}\left[\begin{smallmatrix} 1 \\ -1\end{smallmatrix}\right]\)):
\[
\hat{\matU}_1 = \hat{\matV}_1 = \matQ \coloneqq \frac{1}{\sqrt{2}}\begin{bmatrix} 1 & 1 \\ 1 & -1\end{bmatrix},\quad
\hat{\matS}_1 = \begin{bmatrix} 6 & 0 \\ 0 & 2 \end{bmatrix}.
\]
The matrix \(\hat{\matA}_2 = 2\matI\) has both singular values equal to \(2\), so its SVD is not unique. We are free to choose any orthogonal factors; for consistency we set \(\hat{\matU}_2 = \hat{\matV}_2 = \matQ\), \(\hat{\matS}_2 = 2\matI\).

\medskip
\noindent\textbf{Step 4: Assemble the transform-domain factors.}
The singular tubes are
\(\hat{\vsigma}_1 = \left[\begin{smallmatrix} 6 \\ 2 \end{smallmatrix}\right]\) and
\(\hat{\vsigma}_2 = \left[\begin{smallmatrix} 2 \\ 2 \end{smallmatrix}\right]\).
Both are non-zero, and \(\hat{\vsigma}_1 \geq \hat{\vsigma}_2\) entry-wise, confirming the ordering \(\vsigma_1 \Mgeq \vsigma_2\).

\medskip
\noindent\textbf{Step 5: Move back to the primal domain.}
Since \(\hat{\matU}_1 = \hat{\matU}_2 = \matQ\), we get \(\matU_1 = \matQ\), \(\matU_2 = \matZero\) (and similarly \(\matV_1 = \matQ\), \(\matV_2 = \matZero\)).
For the singular factor:
\[
\tenS:\quad \matS_1 = \tfrac{1}{2}(\hat{\matS}_1 + \hat{\matS}_2) = \begin{bmatrix} 4 & 0 \\ 0 & 2 \end{bmatrix},\quad
\matS_2 = \tfrac{1}{2}(\hat{\matS}_1 - \hat{\matS}_2) = \begin{bmatrix} 2 & 0 \\ 0 & 0 \end{bmatrix}.
\]
The primal singular tubes are \(\vsigma_1 = \left[\begin{smallmatrix} 4 \\ 2 \end{smallmatrix}\right]\) and \(\vsigma_2 = \left[\begin{smallmatrix} 2 \\ 0 \end{smallmatrix}\right]\). Since both are non-zero, \(\tenA\) has \(\tprod\)-rank \(2\).

\medskip
\noindent\textbf{Step 6: Verification.}
In the transform domain, \(\hat{\matU}_j\hat{\matS}_j\hat{\matV}_j^\T = \hat{\matA}_j\) for \(j=1,2\) by construction, confirming \(\tenA = \tenU \tprod \tenS \tprod \tenV^\T\). Note that while the frontal slices of \(\tenU\) are not individually unitary (\(\matU_2 = \matZero\)), the tensor \(\tenU\) is \(\tprod\)-unitary since each \(\hat{\matU}_j\) is unitary.
\end{example}

\begin{example}[Worked example: multirank truncation]
\label{ex:truncation-worked}
Continuing Example \ref{ex:tsvd-worked}, we truncate \(\tenA\) to \(\tprod\)-multirank \(\vr = (1,1)\). In the transform domain, we keep only the leading singular value from each slice:
\[
(\widehat{\lrapprox{\tenA}{\vr}})_1 = 6 \cdot \tfrac{1}{\sqrt{2}}\begin{bmatrix}1\\1\end{bmatrix} \cdot \tfrac{1}{\sqrt{2}}\begin{bmatrix}1\\1\end{bmatrix}^\T = \begin{bmatrix}3&3\\3&3\end{bmatrix}, \qquad
(\widehat{\lrapprox{\tenA}{\vr}})_2 = 2 \cdot \tfrac{1}{\sqrt{2}}\begin{bmatrix}1\\1\end{bmatrix} \cdot \tfrac{1}{\sqrt{2}}\begin{bmatrix}1\\1\end{bmatrix}^\T = \begin{bmatrix}1&1\\1&1\end{bmatrix}.
\]
Moving back to the primal domain, denoting the approximation by \(\tenB = \lrapprox{\tenA}{\vr}\):
\[
\matB_1 = \begin{bmatrix}2&2\\2&2\end{bmatrix}, \qquad
\matB_2 = \begin{bmatrix}1&1\\1&1\end{bmatrix}.
\]
The approximation error is \(\FnormS{\tenA - \tenB} = \FnormS{\left[\begin{smallmatrix} 1&-1\\-1&1\end{smallmatrix}\right]} + \FnormS{\left[\begin{smallmatrix} 0&0\\0&0\end{smallmatrix}\right]} = 4\), which equals the sum of the squared dropped singular values scaled by \(|c|^{-2}\): \((2^2 + 2^2)/2 = 4\).
\end{example}

\section{Additional Discussion of Theorem~\ref{thm:main}}
\label{app:additional-discussion}

In this appendix we collect supplementary observations about Theorem~\ref{thm:main} and the constructions in its proof: a sketch of a shorter (but less elementary) alternative proof, a remark on the role played by weak inverses, and an explicit algorithm extracted from the proof for recovering $\matM$ from a tubal product.

\paragraph{Shorter (But Less Elementary) Proof}
Our proof is based on showing that a tubal ring can be decomposed to a direct sum of fields. We made sure to use reasonably elementary concepts and results, instead of appealing to stronger ones from abstract algebra. A shorter argument is as follows. A tubal ring, being a ring over a finite dimensional vector space, is Artinian. Being von Neumann implies that it is also reduced (has no non-zero elements with square zero). A commutative Artinian ring that is reduced is semisimple, equivalently a finite product of fields. From here, writing the ring as a direct sum of fields is straightforward.

\paragraph{Importance of Existence of Weak Inverses}
A commutative ring is von Neumann regular if and only if reduced and has Krull dimension 0~\citep[Proposition 4.41]{BurklundEtAl22}. An Artinian ring always has Krull dimension 0 \citep[Theorem 8.5]{AtiyahMacdonald1969}. Thus, a commutative ring over $\R^n$ which is not tubal will be reduced, i.e., has an element $\vn$ such that $\vn^2=0$. Obviously, this is a somewhat pathological situation for elements that we want to serve as scalars. We conjecture that the existence of such elements precludes a meaningful definition for unitary tensors, with such definition essential for defining a tensor SVD. We leave this for future research.

\paragraph{Finding $\matM$}
Even though Theorem~\ref{thm:main} is stated as an existential result, an algorithm for finding $\matM$ given access only to the tubal product can be extracted from the proof. The procedure is given as Algorithm~\ref{alg:find-M} below.

If we are dealing with a tubal ring, Algorithm~\ref{alg:find-M} will find $\matM$. As for failing when dealing with a non-tubal ring, notice that the only step that might ``fail'' is the diagonalization of $\matR_\vx$ (line 7), and there is no guarantee that it indeed fails almost surely (thus certifying we are dealing with a non-tubal ring).

\begin{example}[Dual numbers are not a tubal ring - revisited]
    Going back to the example of dual numbers, notice that the representative matrix associated with $\vx=[x_1,x_2]^\T$ is
    \begin{equation*}
        \matR_\vx = \begin{bmatrix}
            x_1 & 0 \\
            x_2 & x_1
        \end{bmatrix}
    \end{equation*}
    which is not diagonalizable for $x_2$. So Algorithm~\ref{alg:find-M} will fail almost surely. This also proves that the dual numbers are not a tubal ring, as asserted in Subsection~\ref{subsec:examples}.
\end{example}

\begin{algorithm}[H]
\caption{\label{alg:find-M} Find Matrix $\matM$ for Tubal Ring Product}
\begin{algorithmic}[1]
\Require Binary operator $\mathsf{op}: \R^n \times \R^n \to \R^n$, integer $n$
\Ensure Matrix $\matM \in \R^{n \times n}$ such that $\mathsf{op}(\vx, \vy) = \matM^{-1} \left( (\matM \vx) \hadprod (\matM \vy) \right)$

\State $\vx \gets$ random vector in $\R^n$
\State Initialize $\matR_\vx \in \R^{n \times n}$ as zero matrix
\For{$i = 1$ to $n$}
    \State $\ve_i \gets$ $i$-th standard basis vector in $\R^n$
    \State $\matR_\vx[:, i] \gets \mathsf{op}(\vx, \ve_i)$
\EndFor
\State \textbf{Decompose:} $\matR_\vx = \matS \matLambda_\vx \matS^{-1}$ \Comment{eigendecomposition}
\State $\vy \gets \matS \ve$ \Comment{$\ve$ is the vector of ones}
\State Initialize $\matM \in \R^{n \times n}$ as zero matrix
\For{$i = 1$ to $n$}
    \State $\ve_i \gets$ $i$-th standard basis vector in $\R^n$
    \State Initialize $\matR_{\ve_i} \in \R^{n \times n}$ as zero matrix
    \For{$j = 1$ to $n$}
        \State $\ve_j \gets$ $j$-th standard basis vector in $\R^n$
        \State $\matR_{\ve_i}[:, j] \gets \mathsf{op}(\ve_i, \ve_j)$
    \EndFor
    \State $\matM[:, i] \gets \matS^{-1} \cdot \matR_{\ve_i} \cdot \vy$
\EndFor
\State \Return $\matM$
\end{algorithmic}
\end{algorithm}

\section{Hilbert Algebra Structure}
\label{app:hilbert-algebra}

In this appendix we establish the Hilbert algebra structure referenced in Section~\ref{sec:org134fd1b}.

Consider an element $\va \in \KM$, and define $T_\va:\KM \to \KM$ via $T_\va(\vx)=\va\Mprod \vx$. This is a linear operator, so it has an adjoint $T^\adj$ with respect to the inner product defined by the dot product, aka Frobenius inner product,  $\dotprod{\va}{\vb}\coloneqq \va^\T \vb$. We would expect that this adjoint will be given by the conjugate to $\va$, i.e., $T^\adj_\va(\vy)=T_{\va^\sconj}(\vy)=\va^\sconj \Mprod \vy$, however it is easy to build counter examples for which this does not hold. However, if $\matM$ is a scaled unitary matrix then it does hold, as the following proposition shows (it actually holds for a slightly larger class of matrices). We remark that the equality it proves is a key requirement of so-called ``Hilbert algebra'' structures~\cite{Takesaki2003}.

\begin{proposition}
    \label{prop:hilbert-algebra}
    Suppose that $\matM=\matD\matW$ for some invertible real diagonal matrix and unitary $\matW$. Then for every $\va,\vx,\vy$ we have
    \begin{equation}
    \label{eq:hilbert-algebra}
        \dotprod{\va\Mprod\vx}{\vy}=\dotprod{\vx}{\va^\sconj\Mprod \vy}.
    \end{equation}
\end{proposition}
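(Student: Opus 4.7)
The plan is to transport the identity into the transform domain defined by $\matM$, where the $\Mprod$-product becomes the facewise (Hadamard) product and $\sconj$ becomes entrywise complex conjugation. Both sides of \eqref{eq:hilbert-algebra} should then reduce to the same explicit scalar sum. To set this up, note that since $\va\Mprod\vx$ and $\va^\sconj\Mprod\vy$ are real, the Frobenius inner product $\dotprod{\vc}{\vd} = \vc^\T\vd = \vc^\ha\vd$ can be rewritten using $\vc = \matM^{-1}\hat{\vc}$ and $\vd = \matM^{-1}\hat{\vd}$ as
\[
\dotprod{\vc}{\vd} \;=\; \hat{\vc}^{\ha}\bigl(\matM^{-\ha}\matM^{-1}\bigr)\hat{\vd}.
\]
Everything therefore hinges on the weight matrix $\matM^{-\ha}\matM^{-1}$.

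The first key step is to exploit the structural assumption $\matM=\matD\matW$. Because $\matD$ is real diagonal and $\matW$ is unitary, one has $\matM^{-1}=\matW^\ha\matD^{-1}$ and $\matM^{-\ha}=\matD^{-1}\matW$, so a direct multiplication gives
\[
\matM^{-\ha}\matM^{-1} \;=\; \matD^{-1}\matW\matW^\ha\matD^{-1} \;=\; \matD^{-2}.
\]
Thus the inner product diagonalizes in the transform domain: for real $\vc,\vd$, $\dotprod{\vc}{\vd} = \sum_i d_i^{-2}\,\overline{\hat{c}_i}\,\hat{d}_i$.

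With this in hand, I would apply the Fact $\widehat{\va\Mprod\vx}=\hat{\va}\hadprod\hat{\vx}$ and the definitional identity $\widehat{\va^{\sconj}}=\overline{\hat{\va}}$ to both sides of \eqref{eq:hilbert-algebra}. The left-hand side becomes $\sum_i d_i^{-2}\,\overline{\hat{a}_i\hat{x}_i}\,\hat{y}_i$ and the right-hand side becomes $\sum_i d_i^{-2}\,\overline{\hat{x}_i}\,\overline{\hat{a}_i}\,\hat{y}_i$; these expressions agree termwise, which proves the identity. There is no real obstacle here: the proof is essentially one explicit computation, and the content of the proposition is packed into the simplification $\matM^{-\ha}\matM^{-1}=\matD^{-2}$. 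The interesting interpretive point is that this is precisely where the hypothesis on $\matM$ is needed—for a general invertible $\matM$ the matrix $\matM^{-\ha}\matM^{-1}$ is only Hermitian positive definite and couples distinct transform-domain components, which destroys the pointwise structure and hence the Hilbert-algebra identity.
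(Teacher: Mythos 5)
Your proof is correct. It computes the same thing the paper does, but packages it more transparently by working entirely in the transform domain. The paper's proof stays in the primal domain and chains matrix identities: it expands $\va\Mprod\vx=\matW^\ha\diag(\matW\va)\matD\matW\vx$ and $\va^\sconj\Mprod\vy=\matW^\ha\diag(\overline{\matW}\va)\matD\matW\vy$, then transposes the left side and commutes the two diagonal factors to match the right side. You instead pull the Frobenius inner product back through $\matM$ once and observe the crucial fact explicitly: under $\matM=\matD\matW$ the weight matrix $\matM^{-\ha}\matM^{-1}$ collapses to the \emph{diagonal, real} matrix $\matD^{-2}$, so the inner product becomes the weighted Hermitian pairing $\sum_i d_i^{-2}\overline{\hat c_i}\,\hat d_i$. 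Combined with $\widehat{\va\Mprod\vx}=\hat\va\hadprod\hat\vx$ and $\widehat{\va^\sconj}=\overline{\hat\va}$ (both verified correctly), the identity is then immediate termwise. This reorganization isolates exactly where the scaled-unitary hypothesis enters — as you note, a general invertible $\matM$ yields a non-diagonal positive-definite weight that couples transform-domain coordinates — which the paper's matrix chain leaves implicit. Both arguments are of equal length and rigor, but yours gives a cleaner conceptual picture of why the hypothesis is necessary.
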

\begin{proof}

    Note that for every $\vx$ we have $\va \Mprod \vx = \matM^{-1}\diag(\matM\va)\matM\vx$. Since $\matM=\matD\matW$ where $\matW$ is unitary and $\matD$ is diagonal, $\matM^{-1}=\matW^\ha \matD^{-1}$ and $\diag(\matM\va)=\matD\diag(\matW\va)$. So,
    \begin{equation}
    \label{eq:star-M-factored}
    \va \Mprod \vx = \matW^\ha\diag(\matW\va)\matD\matW\vx.
    \end{equation}
    Since $\va^\sconj=\matM^{-1}\overline{\matM}\va=\matW^\ha\overline{\matW}\va$, using same identities we have
    \begin{equation*}
    \va^\sconj \Mprod \vx = \matW^\ha\diag(\matW\va^\sconj)\matD\matW\vx =  \matW^\ha\diag(\overline{\matW}\va)\matD\matW\vx.
    \end{equation*}
    Thus,
    \begin{align*}
       \dotprod{\va\Mprod\vx}{\vy} &=  \dotprod{\matM^{-1}\diag(\matM\va)\matM\vx}{\vy} \\
        &= \left(\matM^{-1}\diag(\matM\va)\matM\vx\right)^\ha\vy \\
        &= \left(\matW^\ha\diag(\matW\va)\matD\matW\vx\right)^\ha\vy \\
        &= \vx^\ha \matW^\ha\matD\diag(\matW\va)^\ha\matW\vy \\
        &= \vx^\ha \matW^\ha\overline{\diag(\matW\va)}\matD\matW\vy \\
        &= \vx^\ha \matW^\ha\diag(\overline{\matW}\va)\matD\matW\vy \\
        &= \dotprod{\vx}{\va^\sconj\Mprod \vy}
    \end{align*}
\end{proof}

The Frobenius inner product can be extended to matrices and tensors. For $\tenA,\tenB\in\R^{I_1\times \dots \times I_N}$ define
\[
\dotprod{\tenA}{\tenB} \coloneqq \sum^{I_1}_{i=1}\cdots\sum^{I_N}_{i_N=1}\tenA_{i_1i_2\dots i_N} \tenB_{i_1i_2\dots i_N}
\]
Eq.~\eqref{eq:hilbert-algebra} can be extended tubal tensors:
\begin{proposition}
    \label{prop:hilbert-algebra-tensors}
    Suppose that $\tenA\in\KM^{m\times p}$. For every $\matX \in \KM^p$ and $\matY\in\KM^m$ we have
    \[
    \dotprod{\tenA \Mprod \matX}{\matY} = \dotprod{\matX}{\tenA^{\ha} \Mprod \matY}
    \]
\end{proposition}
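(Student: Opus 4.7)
The plan is to reduce this proposition to the scalar case of Proposition~\ref{prop:hilbert-algebra} by expanding the Frobenius inner product on oriented matrices as a double sum over tube-level inner products. The overall structure mirrors the classical proof that the Hermitian conjugate represents the adjoint operator with respect to the Frobenius inner product; the only subtlety is that the ``scalars'' here are tubes rather than numbers, and we must be able to pass the $\sconj$ across each tube-level inner product individually.

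First, I would unpack the Frobenius inner product on oriented matrices into its row (tube) components. The Frobenius inner product sums entries across both the row index and the inward tube index, so it factors as a sum of tube-level dot products over rows. Since $(\tenA \Mprod \matX)_i = \sum_{k=1}^p \tenA_{ik} \Mprod \matX_k$ by the definition of $\Mprod$ between tensors, bilinearity of $\dotprod{\cdot}{\cdot}$ gives
\[
\dotprod{\tenA \Mprod \matX}{\matY} \;=\; \sum_{i=1}^m \dotprod{(\tenA \Mprod \matX)_i}{\matY_i} \;=\; \sum_{i=1}^m \sum_{k=1}^p \dotprod{\tenA_{ik} \Mprod \matX_k}{\matY_i}.
\]

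Next, I would apply Proposition~\ref{prop:hilbert-algebra} to each scalar summand; this tacitly carries the structural assumption $\matM = \matD\matW$ with $\matD$ real diagonal invertible and $\matW$ unitary inherited from that proposition. Doing so yields
\[
\sum_{i=1}^m \sum_{k=1}^p \dotprod{\tenA_{ik} \Mprod \matX_k}{\matY_i} \;=\; \sum_{i=1}^m \sum_{k=1}^p \dotprod{\matX_k}{\tenA_{ik}^{\sconj} \Mprod \matY_i}.
\]
Then I would reassemble the right-hand side back into a product of oriented matrices. By the definition of the $\Mprod$-Hermitian transpose, $(\tenA^\ha)_{ki} = \tenA_{ik}^\sconj$, so exchanging the order of summation and applying the tensor-matrix product rule gives $\sum_{i=1}^m \tenA_{ik}^\sconj \Mprod \matY_i = (\tenA^\ha \Mprod \matY)_k$. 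Using bilinearity once more yields $\sum_{k=1}^p \dotprod{\matX_k}{(\tenA^\ha \Mprod \matY)_k} = \dotprod{\matX}{\tenA^\ha \Mprod \matY}$, which closes the chain.

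There is no real obstacle here: the proof is pure bookkeeping, and the only nontrivial ingredient—the tube-level identity that lets us push $\sconj$ across the dot product—has already been established in Proposition~\ref{prop:hilbert-algebra}. The minor care required is simply to ensure that the $\Mprod$-Hermitian transpose, which is defined entrywise via the tube-level $\sconj$, aligns correctly with the tube-level adjoint relation when the row and column indices are transposed.
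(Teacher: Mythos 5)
Your proof is correct and is essentially the same as the paper's: expand the Frobenius inner product into a double sum over tube-level dot products, apply Proposition~\ref{prop:hilbert-algebra} to each summand, and reassemble via the definition of $\tenA^\ha$. You also correctly flag that the argument implicitly inherits the hypothesis $\matM = \matD\matW$ from Proposition~\ref{prop:hilbert-algebra}, a condition the paper likewise relies on in its proof without restating it in the proposition.
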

\begin{proof}
    For every two oriented matrices $\matA,\matB\in\R^{p\times 1\times n}\cong\KM^p$
    \[
    \dotprod{\matA}{\matB} = \sum^p_{i=1}\dotprod{\matA_{i1:}}{\matB_{i1:}}
    \]
    so,
    \begin{align*}
        \dotprod{\tenA \Mprod \matX}{\matY} &= \sum^m_{i=1} \dotprod{(\tenA \Mprod \matX)_{i1:}}{\matY_{i1:}} \\
        &= \sum^m_{i=1}\dotprod{\sum^p_{j=1}\tenA_{ij}\Mprod\matX_{j1:}}{\matY_{i1:}} \\
        &= \sum^m_{i=1}\sum^p_{j=1}\dotprod{\tenA_{ij}\Mprod\matX_{j1:}}{\matY_{i1:}} \\
        &= \sum^p_{j=1}\sum^m_{i=1}\dotprod{\matX_{j1:}}{\tenA_{ij}^\sconj\Mprod\matY_{i1:}} \\
        &= \sum^p_{j=1} \dotprod{\matX_{j1:}}{\sum^m_{i=1}\tenA_{ij}^\sconj\Mprod\matY_{i1:}} \\
        &= \sum^p_{j=1} \dotprod{\matX_{j1:}}{(\tenA^\ha \Mprod \matY)_{j1:}} \\
        &= \dotprod{\matX}{\tenA^\ha \Mprod \matY}
    \end{align*}
\end{proof}

\end{document}